\newcommand{\hidden}[1]{}
\theoremstyle{plain}
\newtheorem{thm}{Theorem}
\newtheorem{cor}[thm]{Corollary}
\newtheorem{lem}[thm]{Lemma}
\newtheorem{prop}[thm]{Proposition}
\theoremstyle{definition}
\newtheorem{rem}{Remark}
\newcommand{\ca}[1]{\mathcal{#1}}
\newcommand{\beq}{\begin{equation}}
\newcommand{\eq}{\end{equation}}
\newcommand{\N}{\mathbb{N}}
\newtheorem{phil}{Theorem (Philipp)\!\!\!}
\newtheorem{hjk}{Theorem (Haynes, Jensen \& Kristensen)\!\!}
\newtheorem{thdel}{Theorem (Davenport, Erd\H{o}s \& LeVeque)\!\!}
\newtheorem{kqr}{Theorem (Kaufman \& Queff\'elec-Ramar\'e)\!\!}
\def\Bad{{\rm {\bf Bad}}}
\begin{document}
\footnotetext{Keywords: Discrepancy, lacunary series, law of the iterated logarithm}
\footnotetext{2010 Mathematical Subject Classification: 11K38, 42A55, 60F15}
%\title{ Inhomogeneous Diophantine Approximation with restricted denominators}

\title{ The discrepancy of $(n_kx)_{k=1}^{\infty}$ with respect to certain probability measures }

\author{ Niclas Technau \footnote{Research Supported by EPSRC Programme Grant EP/J018260/1 }  \\ {\small\sc (York) }  \and
Agamemnon Zafeiropoulos \footnote{Research supported by FWF project Y-901} \\ {\small\sc (TU Graz) }  %\\  { \ }
%  \and ~
%\hspace{9cm}
}

\date{}

\maketitle

\begin{abstract} \noindent Let $(n_k)_{k=1}^{\infty}$ be a lacunary sequence of integers. We show that if $\mu$ is a probability measure on $[0,1)$ such that $|\widehat{\mu}(t)|\leq c|t|^{-\eta}$, then for $\mu$-almost all $x$, the discrepancy $D_N(n_kx)$ satisfies 
\begin{equation*}
\frac{1}{4} \leq \limsup_{N\to\infty}\frac{N D_N(n_kx)}{\sqrt{N\log\log N}} \leq C  
\end{equation*}   
for some constant $C>0$. This proves a conjecture of Haynes, Jensen and Kristensen and allows an  improvement on their previous result relevant to an inhomogeneous version of the Littlewood Conjecture. 
\end{abstract}

\renewcommand{\baselinestretch}{1}
\parskip=1ex

\section{Introduction }
Let $(x_n)_{n=1}^{\infty}$ be a sequence of numbers in the unit interval $[0,1)$. We define the {\em $N$-discrepancy} of the sequence $(x_n)_{n=1}^{\infty}$ to be 
\begin{equation*}
D_N(x_n) = \sup_{0\leq \alpha<\beta<1}\left| \frac{1}{N}Z(N;\alpha,\beta) - (\beta - \alpha)  \right|
\end{equation*}
where $Z(N;\alpha,\beta):= \# \{1\!\leq k\!\leq\!N \, : \, \alpha\leq x_k \leq \beta \} $. A sequence $(x_n)_{n=1}^{\infty}$ is by definition uniformly distributed $\hspace{-2mm}\mod 1$ if and only if $D_N(x_n)\to 0$ as $N\to\infty$. Regarding the order of magnitude of the discrepancy of arbitrary sequences, Schmidt \cite{schmidt} has shown that the discrepancy of any sequence $(x_n)_{n=1}^{\infty}\subseteq [0,1)$ satisfies 
\begin{equation*}
D_N(x_n) \geq c\, \frac{\log N}{N} \hspace{4mm} \text{for inf. many } N=1,2,\ldots
\end{equation*}
where $c>0$ is an absolute constant; thus the discrepancy of an arbitrary sequence cannot tend to $0$ arbitrarily fast. \vspace{2mm} 

\noindent A case of particular interest is the discrepancy of $(n_kx)_{k=1}^{\infty}$, where $(n_k)_{k=1}^{\infty}$ is lacunary and $x\in [0,1)$. Recall that a sequence $(n_k)_{k=1}^{\infty}$ of positive integers is called {\em lacunary} if there exists some constant $q > 1$ such that 
\begin{equation} \label{lacunary}
\frac{n_{k+1}}{n_k} \, \geq \, q \, , \hspace{5mm} k=1,2,\ldots 
\end{equation}
It is well known that whenever \eqref{lacunary} holds, the sequence of functions $(e(n_kx))_{k=1}^{\infty}$ behaves like a sequence of independent random variables (here and in what follows we use the notation $e(x)=e^{2\pi ix}$); for more details we refer to the survey papers \cite{aistberkes2,kac}. One instance of this phenomenon is a result of Erd\H{o}s and Gal \cite{erdosgal} stating that 
\begin{equation}
\limsup_{N\to\infty}\frac{\left|\sum\limits_{k=1}^{N}e(n_kx) \right|}{\sqrt{N\log\log N}} = 1 \hspace{5mm} \text{ for Lebesgue-almost all } x\in[0,1),
\end{equation}
which is an analogue of the Law of the Iterated Logarithm for sequences of independent random variables. Regarding the precise order of magnitude of $D_N(n_kx)$ in that case, it had been conjectured that $D_N(n_kx)$ also satisfies a Law of the Iterated Logarithm, and this was shown to be true by Philipp \cite{philipp}.

\begin{phil}{\em Let $(n_k)_{k=1}^{\infty}$ be a lacunary sequence of integers such that \eqref{lacunary} is satisfied. Then for Lebesgue-almost all $x\in [0,1)$ we have 
\begin{equation} \label{philequation}
\frac{1}{4} \, \leq \, \limsup_{N\to\infty} \frac{ND_N(n_kx)}{\sqrt{N\log\log N}} \, \leq \, C_q \, ,
\end{equation} }
where $C_q\leq 166+664(q^{1/2}-1)^{-1} $ is a constant which depends on $q>1$. 
\end{phil}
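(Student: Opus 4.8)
\noindent The plan is to prove both inequalities by comparison with the star discrepancy $D_N^{*}(n_kx)=\sup_{0<t<1}\left|F_N(t)-t\right|$, where $F_N(t)=\frac1N\#\{1\le k\le N:\{n_kx\}<t\}$; since $D_N^{*}\le D_N\le 2D_N^{*}$, I would extract the lower bound $\tfrac14$ from $D_N^{*}$ at a single cleverly chosen level $t$, and obtain the upper bound from the full uniform-in-$t$ law of the iterated logarithm for the lacunary empirical process $N\mapsto\sum_{k\le N}\bigl(\mathbf{1}_{[0,t)}(n_kx)-t\bigr)$. The two engines are the quasi-independence of the system $(e(n_kx))_{k\ge1}$ forced by \eqref{lacunary}, and the fact that the class of indicators $\{\mathbf{1}_{[0,t)}:0<t<1\}$ is combinatorially small (covering numbers growing at most polynomially).

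For the upper bound I would first set up the quasi-independence machinery: group the indices into consecutive blocks $\Delta_1,\Delta_2,\dots$ of geometrically increasing length, chosen so that exponential sums over distinct blocks are independent up to an acceptable error; the rate at which the blocks must grow, hence the size of this error, is controlled by $q$, and this is precisely where the term of order $(q^{1/2}-1)^{-1}$ in $C_q$ is born. For a single block I would then establish a sub-Gaussian maximal estimate of the shape $\mathbb{E}\exp\!\bigl(\lambda\sup_{t}\bigl|\sum_{k\in\Delta}(\mathbf{1}_{[0,t)}(n_kx)-t)\bigr|\bigr)\le\exp\!\bigl(A\lambda^{2}|\Delta|\bigr)$ for $|\lambda|\lesssim|\Delta|^{-1/2}$, by (i) sandwiching $\mathbf{1}_{[0,t)}$ between Beurling--Selberg (Vaaler) polynomials $V_t^{\pm}$ of degree $M\asymp|\Delta|$, which turns $\sum_{k\in\Delta}(\mathbf{1}_{[0,t)}(n_kx)-t)$ into a weighted lacunary exponential sum $\sum_h\widehat{V_t^{\pm}}(h)\sum_{k\in\Delta}e(hn_kx)$ with weights obeying $\sum_h|\widehat{V_t^{\pm}}(h)|^{2}=O(1)$, (ii) a Chernoff/moment inequality for such weighted sums via the block-independence, and (iii) a chaining argument over $t$ that uses the monotonicity of $t\mapsto\mathbf{1}_{[0,t)}$ to absorb the discretization error $O(|\Delta|/M)=O(1)$. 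Summing the block bounds, Borel--Cantelli along a geometric subsequence $N_j=\lfloor\theta^{j}\rfloor$ gives $\limsup_j N_jD_{N_j}^{*}/\sqrt{N_j\log\log N_j}\le C_q(\theta)$; interpolating between consecutive $N_j$ and finally letting $\theta\downarrow1$ removes $\theta$ and, via $D_N\le 2D_N^{*}$, yields a finite $C_q$ of the stated shape.

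For the lower bound it is enough to test the level $t=\tfrac12$: $ND_N\ge ND_N^{*}\ge\bigl|\sum_{k\le N}(\mathbf{1}_{[0,1/2)}(n_kx)-\tfrac12)\bigr|=\tfrac12\bigl|\sum_{k\le N}\psi(n_kx)\bigr|$, with $\psi=2\mathbf{1}_{[0,1/2)}-1=\operatorname{sgn}\bigl(\sin(2\pi\cdot)\bigr)$. Since every (sine) Fourier coefficient of $\psi$ is non-negative and $\int_0^1\sin(2\pi m x)\sin(2\pi m'x)\,dx=\tfrac12\delta_{m,m'}$, all cross-correlations $\int_0^1\psi(n_kx)\psi(n_jx)\,dx$ are $\ge0$, so the partial sums of $\sum_k\psi(n_kx)$ have variance at least $N$; the lower (easy) half of the law of the iterated logarithm for lacunary sums -- a second-moment/Paley--Zygmund argument over a sparse family of well-separated blocks combined with the divergence Borel--Cantelli lemma -- then pushes $\limsup_N|\sum_{k\le N}\psi(n_kx)|/\sqrt{N\log\log N}$ above an absolute positive constant, and keeping (deliberately crude, non-optimal) track of the numerical constants produces $\tfrac14$.

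The step I expect to be the main obstacle is the uniform-in-$t$ control in the upper bound. A naive union bound over a net of levels $t$ fine enough to kill the polynomial-approximation bias costs a factor $\log N$ \emph{in the exponent} of the large-deviation estimate, which would wreck the $\sqrt{\log\log N}$ normalisation; avoiding this forces a genuine chaining (metric-entropy) argument exploiting the polynomial covering numbers of $\{\mathbf{1}_{[0,t)}\}$, run on top of quasi-independence estimates for lacunary exponential sums that are sub-Gaussian \emph{with an absolute constant} and that behave correctly under the weights produced by the Beurling--Selberg approximation. Threading the explicit $q$-dependence through the block construction and through both estimates is what produces a constant of the form $C_q$, with the $(q^{1/2}-1)^{-1}$ blow-up as $q\downarrow1$.
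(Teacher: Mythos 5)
Your overall architecture for the upper bound (block decomposition with geometric growth to decouple the lacunary system, exponential moment bounds per block, Borel--Cantelli along a geometric subsequence, then a chaining argument to get uniformity in $t$) is the right skeleton and is essentially Philipp's, which is the proof the paper adapts. But the step you yourself flag as ``the main obstacle'' is in fact left unresolved, and it is the crux. The asserted single-block maximal inequality $\mathbb{E}\exp\bigl(\lambda\sup_t|\sum_{k\in\Delta}(\mathbf{1}_{[0,t)}(n_kx)-t)|\bigr)\le\exp(A\lambda^2|\Delta|)$ with the supremum \emph{inside} the exponential is essentially the theorem restated at block level; nothing in the proposal explains how the chaining over $t$ is to be married to the exponential moment bound without losing a factor that destroys the $\sqrt{\log\log N}$ normalisation. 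The standard resolution (used in the paper, following Philipp and Gal--Gal) is to do the chaining \emph{outside} the exponential estimate: one decomposes $\chi_{[0,\alpha)}$ into a telescoping sum of dyadic step functions $\phi_h^{(j)}$, proves a large-deviation estimate for each \emph{fixed} function of bounded variation (via Fourier truncation at level $T$, even/odd blocks $U_{2m}$, $U_{2m-1}$, the inequality $e^z\le 1+z+\frac12(1+\delta)z^2$, and separating the high- and low-frequency parts of $U_m^2$), and then takes a union bound over $h\le H_1\asymp\log N$ and $j\le 2^h$ with thresholds weighted by $2^{-h/8}$ so that the geometric decay in $h$ beats the $2^h$ multiplicity. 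A companion device (the Gal--Gal lemma bounding $F(0,N,\cdot)$ by $F(0,2^n,\cdot)$ plus a sum of short-block terms) handles the passage from the subsequence $2^n$ to all $N$. Your Beurling--Selberg route could in principle replace the Fourier truncation of BV functions, but as written the uniform-in-$t$ sub-Gaussian bound is a gap, not a lemma.

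The lower bound is where your route genuinely diverges from the paper's, and where the constant $\frac14$ is not actually delivered. The paper gets it in one line from Koksma's inequality, $ND_N(n_kx)\ge\frac14|\sum_{k\le N}e(n_kx)|$, combined with the Erd\H{o}s--G\'al law of the iterated logarithm, whose sharp constant $1$ immediately yields $\limsup ND_N/\sqrt{N\log\log N}\ge\frac14$. Your square-wave argument reduces matters to $\limsup|\sum_{k\le N}\psi(n_kx)|/\sqrt{N\log\log N}\ge\frac12$ with $\psi=2\mathbf{1}_{[0,1/2)}-1$; the nonnegativity of the cross-correlations and the variance bound $\ge N$ are correct, but a Paley--Zygmund argument also needs matching upper bounds on fourth (or higher) moments of $\sum_k\psi(n_kx)$, and for the square wave these involve counting solutions of $mn_k=m'n_j$ over all odd harmonics $m,m'$ --- a genuinely harder Diophantine count than the single-frequency case, and one for which ``deliberately crude'' constants have no reason to land at or above $\frac12$. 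So the numerical claim $\frac14$ rests on an unverified constant. I would recommend replacing this entire section by the Koksma plus Erd\H{o}s--G\'al argument, which is both shorter and gives the stated constant exactly.
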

\noindent  This is in accordance with the Chung-Smirnov Law of the Iterated Logarithm, which states that for any sequence $(X_n)_{n=1}^{\infty}$ of independent random variables, uniformly distributed on $[0,1)$, we have
$$ \limsup_{N\to\infty}\frac{ND_N(X_1,X_2,\ldots,X_N)}{\sqrt{2N\log\log N }} = \frac{1}{2}  $$
with probability $1$ (see \cite[p.504]{sw}), thus further  indicating the resemblance of $(n_kx)_{k=1}^{\infty}$ with a sequence of independent random variables. The exact value of the limsup in \eqref{philequation} for specific choices of the sequence $(n_k)_{k=1}^{\infty}$ has been calculated by Fukuyama et.al. in a series of papers \cite{fukuyama, fukuyama2, fuku2, fuku3, fuku4}.

\noindent In the present article we examine whether Philipp's metrical result can be generalised for measures which are supported on several fractal subsets of the unit interval. We focus our attention on probability measures $\mu$ such that their Fourier transform defined by
$$ \widehat{\mu}(t) = \int e^{2\pi i xt}\mathrm{d}\mu(x), \hspace{5mm} t\in\mathbb{R} $$
has a prescribed decay rate. In the results to follow, we assume that the Fourier transform of $\mu$ has a polynomial decay rate, that is, an asymptotic relation of the form 
\begin{equation} \label{mudecay}
|\widehat{\mu}(t)| \, \ll \, |t|^{-\eta}, \hspace{5mm} |t|\to \infty
\end{equation} 
holds for some constant $\eta>0$. The connection of the decay rate of the Fourier transform of $\mu$ with distribution properties is not unexpected, in view of the following theorem of Davenport, Erd\H{o}s and LeVeque.
 
\begin{thdel} {\em Let $\mu$ be a probability  measure supported on $[0,1]$ and $(q_n)_{n=1}^{\infty} $ be a sequence of natural numbers. If \vspace{-1mm}
\begin{equation} \label{del}
%\sum_{N=1}^\infty \frac{1}{N^3}  \; \int_{0}^{1}
%\left| \sum_{n=1}^{N} \exp (2 \pi i h s_n x) \right|^{2} dm(x) \; = \;
\sum_{N=1}^\infty \frac{1}{N^3}  \; \sum_{m,n=1}^N  \widehat{\mu}
(h(q_m-q_n)) \; < \; \infty  \; \vspace{-1mm}
\end{equation}
 for all integers $h\ne 0$, then the sequence $%\ca{A}(x):=
(q_nx)_{n\in \N} $ is uniformly distributed modulo one  for $\mu$--almost all $x \in [0,1)$.}
\end{thdel}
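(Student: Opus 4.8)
The plan is to reduce the statement to Weyl's criterion, then apply a second-moment estimate, and finish with a soft deterministic lemma on partial sums. First I would recall that, by Weyl's criterion, $(q_nx)_{n\in\N}$ is uniformly distributed modulo one for a given $x$ precisely when $N^{-1}\sum_{n=1}^{N}e(hq_nx)\to 0$ for every nonzero integer $h$; since there are only countably many such $h$, it suffices to fix $h\neq 0$, set $S_N(x)=\sum_{n=1}^{N}e(hq_nx)$, and prove that $S_N(x)/N\to 0$ for $\mu$-almost every $x$.

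Next I would invoke the elementary identity $\int|S_N(x)|^2\,\mathrm{d}\mu(x)=\sum_{m,n=1}^{N}\widehat{\mu}(h(q_m-q_n))$, obtained by expanding the square and integrating term by term (note that the right-hand side is automatically real and nonnegative, being the integral of $|S_N|^2$). Dividing by $N^3$ and summing over $N$, hypothesis \eqref{del} becomes precisely $\sum_{N\ge1}N^{-3}\int|S_N(x)|^2\,\mathrm{d}\mu(x)<\infty$, and the monotone convergence theorem then yields $\sum_{N\ge1}N^{-3}|S_N(x)|^2<\infty$ for $\mu$-almost every $x$. It remains to deduce $S_N(x)/N\to0$ pointwise from this.

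The final ingredient is a deterministic lemma: if $(S_N)_{N\ge1}$ is a sequence of complex numbers satisfying $|S_{N+1}-S_N|\le1$ for every $N$ and $\sum_{N\ge1}N^{-3}|S_N|^2<\infty$, then $S_N/N\to0$; it applies here because $|S_{N+1}(x)-S_N(x)|=|e(hq_{N+1}x)|=1$. To prove the lemma I would argue by contradiction: if $\limsup_{N}|S_N|/N=2\delta>0$, then for each $N$ with $|S_N|\ge\delta N$ the inequality $\big||S_M|-|S_N|\big|\le|M-N|$ forces $|S_M|\ge\tfrac{\delta}{3}M$ for all $M$ in the range $N\le M\le(1+\tfrac{\delta}{2})N$, so that $\sum_{N\le M\le(1+\delta/2)N}M^{-3}|S_M|^2\ge\kappa(\delta)>0$; choosing infinitely many such $N$ pairwise separated by a factor larger than $1+\tfrac{\delta}{2}$ produces infinitely many disjoint ranges each contributing at least $\kappa(\delta)$ to $\sum_{N}N^{-3}|S_N|^2$, contradicting its convergence. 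I expect this last lemma to be the only genuinely subtle point: its virtue is that the pointwise Lipschitz behaviour of $S_N$ removes any need for the usual ``prove convergence along a sparse subsequence, then fill in the gaps'' manoeuvre, so that the remainder of the proof is just the standard Weyl-criterion-plus-variance scheme; alternatively one could substitute a G\'al--Koksma type maximal inequality at this point.
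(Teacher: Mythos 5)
Your proof is correct, and it is essentially the classical Davenport--Erd\H{o}s--LeVeque argument: Weyl's criterion for a fixed $h\neq 0$, the identity $\int|S_N|^2\,\mathrm{d}\mu=\sum_{m,n\leq N}\widehat{\mu}(h(q_m-q_n))$ (so the terms of the hypothesised series are nonnegative and Tonelli applies), and then the deterministic step exploiting $|S_{N+1}-S_N|=1$ to upgrade $\sum_N N^{-3}|S_N|^2<\infty$ to $S_N/N\to 0$. The paper states this theorem as a known result without proof, so there is nothing to compare against beyond noting that your contradiction lemma is a clean packaging of the standard ``a Lipschitz sequence cannot be large on a sparse set without the weighted series diverging'' step, and all the estimates in it check out.
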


%\begin{rem} In the rest of the paper we assume that $\eta\leq 1/2$; otherwise $\widehat{\mu}\in L_2(\mathbb{R})$ and $\mu$ is absolutely continuous with respect to the Lebesgue measure. (IS IT TRUE???)
%\end{rem}

\noindent The main result of this paper is the following.

\begin{thm} \label{thm1}
Let $(n_k)_{k=1}^{\infty}$ be a lacunary sequence of integers satisfying \eqref{lacunary}. Assume $\mu$ is a probability measure on $[0,1)$ such that \eqref{mudecay} holds for some $\eta>0.$ Then the discrepancy $D_N(n_kx)$ satisfies 
\begin{equation} \label{thm1eq}
\frac{1}{4} \leq \limsup_{N\to\infty} \frac{ND_N(n_kx)}{\sqrt{N\log\log N}} 
\leq C  \hspace{5mm} \text{ for $\mu$-almost all }x\in[0,1), 
\end{equation}
where the constant $C>0$ only depends on the value of $q>1$ as in \eqref{lacunary}. Additionally $C\leq 166+664(q^{1/2}-1)^{-1}.$ 
\end{thm}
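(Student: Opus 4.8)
The plan is to follow Philipp's strategy, using the Fourier decay of $\mu$ as a substitute for the translation-invariance of Lebesgue measure that Philipp exploited. The lower bound in \eqref{thm1eq} should be essentially free: since $\mu$ satisfies \eqref{mudecay}, it is non-atomic and in fact absolutely continuous is not needed — what matters is that the Davenport--Erd\H{o}s--LeVeque criterion applies, so $(n_kx)$ is u.d. mod $1$ for $\mu$-a.e.\ $x$, and then the Erd\H{o}s--Gál type lower bound for the discrepancy of lacunary sequences transfers to the $\mu$-a.e.\ setting; more directly, the classical lower-bound construction (testing the discrepancy against the single frequency giving the worst Fourier coefficient, via a Koksma-type inequality) only uses that $\left|\sum_{k\le N} e(n_k x)\right|$ is large infinitely often, and the Erd\H{o}s--Gál law holds $\mu$-a.e.\ once one has the variance asymptotics $\int \left|\sum_{k\le N} e(h n_k x)\right|^2 \, d\mu = N + O(\text{error from }\what\mu)$, which \eqref{mudecay} supplies. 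So the bulk of the work is the upper bound, and that is where I expect the main obstacle.

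For the upper bound I would dualize the discrepancy via the Erd\H{o}s--Turán inequality, writing
\begin{equation*}
D_N(n_k x) \ll \frac{1}{H} + \frac{1}{N}\sum_{h=1}^{H}\frac{1}{h}\left|\sum_{k=1}^{N} e(h n_k x)\right|,
\end{equation*}
and then run Philipp's machinery on the exponential sums $S_N(x) = \sum_{k\le N} e(h n_k x)$ uniformly in $h$. The core of Philipp's argument is: (i) a \emph{martingale-type / Gál--Koksma strong law} argument reducing the $\limsup$ over all $N$ to a subsequence $N_j$ growing geometrically; (ii) on each block, approximating the lacunary exponential sum by a sum of (almost) independent bounded random variables and invoking a Bernstein/Kolmogorov exponential inequality together with Borel--Cantelli; and (iii) controlling the error of the independence approximation via fourth-moment estimates. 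Every moment that Philipp computes against $dx$ — second and fourth moments of $S_N$, and the joint moments over disjoint blocks — I would recompute against $d\mu$. Each such moment is a finite sum $\sum \what\mu(\pm h n_{k_1} \pm \cdots)$ over indices forced by lacunarity to have \emph{non-zero, geometrically separated} arguments, so \eqref{mudecay} bounds every cross term by a convergent geometric series in $q^{-\eta}$; the diagonal terms reproduce exactly the same main terms ($N$ for the variance, $3N^2$-type for the fourth moment) as in the Lebesgue case. Consequently the exponential-moment / large-deviation estimates go through with the same leading constants, which is what keeps the final constant $C \le 166 + 664(q^{1/2}-1)^{-1}$ identical to Philipp's.

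Concretely, the steps in order: (1) record that \eqref{mudecay} gives $\int |S_N^{(h)}|^2 \, d\mu = N + O_q(1)$ and the analogous fourth-moment bound $\int |S_N^{(h)}|^4 \, d\mu \ll N^2$, with implied constants uniform in $h$, by expanding and using the lacunarity gaps to keep all $\what\mu$-arguments bounded away from $0$; (2) deduce the lower bound via Erd\H{o}s--Gál and a Koksma/Hlawka lower-bound test function, plus the Davenport--Erd\H{o}s--LeVeque theorem for u.d.; (3) for the upper bound, pass to a geometric subsequence $N_j = \lfloor\theta^j\rfloor$ and, on the increments, split the lacunary sequence into long blocks separated by short gaps so that the block sums are nearly independent; (4) apply an exponential inequality (Bernstein-type, using the uniform variance and the boundedness $|e(\cdot)|=1$) to each centered block sum, sum the resulting tail bounds over $h \le H_j$ and over $j$, and invoke Borel--Cantelli; (5) patch the subsequence back to all $N$ using monotonicity of the counting function and the Gál--Koksma lemma, and optimize the parameter $H=H_j$ in Erd\H{o}s--Turán. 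The main obstacle is step (3)--(4): one must verify that replacing independence by "$\mu$-quasi-independence with $\what\mu$-controlled errors" does not degrade the exponential decay rate of the large-deviation bound — i.e.\ that the accumulated error terms, each a geometric series in $q^{-\eta}$, stay lower-order against the $\exp(-c\,(\text{rate})^2/N)$ main term for the relevant range of deviations $\approx \sqrt{N\log\log N}$. This is exactly the place where one has to be careful that the dependence on $\eta$ enters only the error terms and not the leading constant; modulo that bookkeeping, Philipp's proof transfers verbatim.
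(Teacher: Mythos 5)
Your core intuition --- that the Fourier decay \eqref{mudecay} controls all off-diagonal terms in moment expansions because lacunarity keeps the arguments of $\widehat{\mu}$ non-zero and geometrically separated --- is exactly the mechanism the paper uses (Lemmas \ref{lem1} and \ref{lemma3}). But both halves of your plan have genuine gaps. For the upper bound, the Erd\H{o}s--Tur\'an reduction cannot deliver the LIL rate: to make the term $N/H$ of order $\sqrt{N\log\log N}$ you must take $H\gg\sqrt{N}$, and then $\sum_{h\le H}h^{-1}\bigl|\sum_{k\le N}e(hn_kx)\bigr|$ is at least of order $\log N\cdot\sqrt{N\log\log N}$ even if every individual exponential sum obeys the LIL, because the weights $h^{-1}$ give no decay after summation. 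This is precisely why Philipp, and the paper, instead bound $ND_N$ by $2\sup_f|\sum_{k\le N}f(n_kx)|$ over mean-zero functions of bounded variation and decompose the indicators $\chi_{[0,\alpha)}$ into the dyadic functions $\phi_h^{(j)}$ whose $L^2$ norms decay like $2^{-h/2}$; the convergent sum $\sum_h 2^{-h/8}$ in the final union bound is what produces both the correct order of magnitude and the explicit constant $166+664(q^{1/2}-1)^{-1}$. Your exponential-moment and blocking steps (Lemma \ref{lemma3} and Proposition \ref{prop} in the paper) are the right ingredients, but they must be fed into this decomposition, not into Erd\H{o}s--Tur\'an.

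The lower bound is not ``essentially free.'' Variance asymptotics $\int|S_N|^2\,d\mu=N+O(1)$ give no pointwise lower bound on a limsup, and the Davenport--Erd\H{o}s--LeVeque theorem only yields uniform distribution, which says nothing about the $\sqrt{N\log\log N}$ scale. The paper must reprove the full Erd\H{o}s--G\'al lower bound in the $\mu$-setting (Proposition \ref{muLIL}): this requires the counting lemmas for solutions of $A(x,y)\in B(s,2^{K-1})$, the $2p$-th moment estimates $I_{2p}=\int_\alpha^\beta|F(N;x)|^{2p}\,d\mu$ for $p$ as large as $3\log\log N$, and --- crucially --- these moments must be computed over arbitrary subintervals $(\alpha,\beta)$, which forces one to show that the normalized restriction of $\mu$ to a subinterval retains polynomial Fourier decay (Lemma \ref{decaylemma}, proved in the appendix and not at all automatic). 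Without these ingredients, your step (2) does not go through.
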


An application of Theorem \ref{thm1} is an improvement of a result of Haynes, Jensen and Kristensen in \cite{hjk} relevant to an inhomogeneous version of Littlewood's conjecture, which is the statement that for all $\alpha,\beta\in\mathbb{R}$, we have $\liminf q\|q\alpha\|\|q\beta\|=0.$ This is clearly the case when $\alpha$ or $\beta$ is an element of the set $\Bad:=\{x\in [0,1): \liminf q\|qx\|>0 \}$ of {\em badly approximable} numbers. The result proved in \cite{hjk} is the following: 
\begin{hjk}{\em Fix $\varepsilon>0$ and a sequence $(\alpha_i)_{i=1}^{\infty}\subseteq \Bad$. Then there exists a set $G\subseteq \Bad$ of Hausdorff dimension $\dim G=1$, such that for all $\beta\in G$ the following holds: 
$$ q\|q\alpha_i\|\|q\beta-\gamma\|< 1/(\log q)^{\frac{1}{2}-\varepsilon} \hspace{4mm} \text{ for inf. many } q=1,2,\ldots $$
for all $i\geq 1$ and all $\gamma\in\mathbb{R}$.}
\end{hjk}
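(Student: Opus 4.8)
The plan is to derive the statement as a soft consequence of Theorem~\ref{thm1}, the only substantial external ingredient being a supply of Fourier-decaying measures living on badly approximable numbers. Fix the given sequence $(\alpha_i)_{i=1}^{\infty}\sub\Bad$ and the number $\ve>0$. For each $i$, let $(q_j^{(i)})_{j=1}^{\infty}$ be the sequence of denominators of the convergents of $\alpha_i$. Since $\alpha_i\in\Bad$, its partial quotients are bounded, say by $M_i$, and standard continued-fraction estimates give
\begin{equation*}
1+\frac{1}{M_i+1}\ \le\ \frac{q_{j+1}^{(i)}}{q_j^{(i)}}\ \le\ M_i+1 \qquad\text{and}\qquad q_j^{(i)}\,\|q_j^{(i)}\alpha_i\|\ <\ 1 .
\end{equation*}
Hence $(q_j^{(i)})_j$ is a lacunary integer sequence, and $\log q_j^{(i)}\le c_i\,j$ for some $c_i>0$. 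For $M\in\N$ let $E_M\sub[0,1)$ be the set of irrationals all of whose partial quotients are at most $M$; then $E_M\sub\Bad$, and by the Kaufman--Queff\'elec--Ramar\'e theorem there is, for each $M$, a probability measure $\mu_M$ supported on $E_M$ with $|\what{\mu}_M(t)|\ll_M|t|^{-\eta_M}$ for some $\eta_M>0$ and with the property that every set of positive $\mu_M$-measure has Hausdorff dimension at least $s_M$, where $s_M\to 1$ as $M\to\infty$.

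Next, apply Theorem~\ref{thm1} to the lacunary sequence $(q_j^{(i)})_j$ with the measure $\mu_M$: for $\mu_M$-almost every $\beta$ there is a threshold $J_0=J_0(\beta,i,M)$ with
\begin{equation*}
D_J\big((q_j^{(i)}\beta)_{j\le J}\big)\ \le\ 2C_i\sqrt{\frac{\log\log J}{J}} \qquad (J\ge J_0),
\end{equation*}
$C_i$ being the constant furnished by Theorem~\ref{thm1} for this sequence. Intersecting these conull sets over the countably many $i$ produces a $\mu_M$-conull set $G_M\sub E_M\sub\Bad$ on which the displayed bound holds for all $i$ at once, and we put $G:=\bigcup_{M\ge1}G_M\sub\Bad$.

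Now fix $\beta\in G$, say $\beta\in G_M$, an index $i$, and $\gamma\in\R$; we may assume $\gamma\in[0,1)$, as $\|q\beta-\gamma\|$ depends only on $\gamma \bmod 1$. Set $\delta_J:=5C_i\sqrt{\log\log J/J}$. The arc of radius $\delta_J$ about $\gamma$ is a union of at most two subintervals of $[0,1)$, so for all large $J$ the discrepancy bound forces
\begin{equation*}
\#\{\,j\le J:\ \|q_j^{(i)}\beta-\gamma\|<\delta_J\,\}\ \ge\ (2\delta_J-2D_J)\,J\ \ge\ 6C_i\sqrt{J\log\log J}\ \longrightarrow\ \infty ,
\end{equation*}
so this set contains arbitrarily large indices $j$ (its cardinality tends to infinity, hence so does its maximum). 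For any such $j\le J$ one has $\log q_j^{(i)}\le c_i\,J$, whence $(\log q_j^{(i)})^{-1/2+\ve}\ge(c_i\,J)^{-1/2+\ve}>\delta_J$ once $J$ is large, because $5C_i\,c_i^{1/2-\ve}\sqrt{\log\log J}\le J^{\ve}$ eventually. Combining this with $q_j^{(i)}\|q_j^{(i)}\alpha_i\|<1$ gives, for infinitely many $q=q_j^{(i)}$,
\begin{equation*}
q\,\|q\alpha_i\|\,\|q\beta-\gamma\|\ <\ \delta_J\ \le\ \frac{1}{(\log q)^{1/2-\ve}},
\end{equation*}
which is the asserted inequality, valid for every $\beta\in G$, every $i\ge1$ and every $\gamma\in\R$. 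Finally $\dim G=1$: each $G_M$ has full $\mu_M$-measure, so $\dim G_M\ge s_M$, and by the countable stability of Hausdorff dimension $\dim G=\sup_M\dim G_M\ge\sup_M s_M=1$, while $G\sub\Bad$ gives $\dim G\le 1$.

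All of the analytic weight is carried by Theorem~\ref{thm1} and by the construction of the measures $\mu_M$, both of which we are free to invoke, so the deduction above is essentially formal. The one point deserving a little care is the interplay between the quantitative discrepancy rate and the growth $\log q_j^{(i)}\asymp_i j$, which must be calibrated so as to beat the exponent $\tfrac12-\ve$ uniformly in the target $\gamma$ --- the uniformity in $\gamma$ being automatic, since small discrepancy of $(q_j^{(i)}\beta)$ means that all short arcs are hit simultaneously. I do not anticipate a genuine obstacle. Finally, I note that keeping the full $\sqrt{\log\log J/J}$ rate of Theorem~\ref{thm1} (rather than contenting oneself with a polynomial bound) one may instead take $\delta_J\asymp_i(\log\log\log q/\log q)^{1/2}$, which dispenses with the $\ve$ and yields the quantitative sharpening of \cite{hjk} announced in the introduction.
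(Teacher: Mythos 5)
Your argument is correct and follows essentially the same route as the paper's proof of its Theorem \ref{thm3} (the strengthened version of this statement): apply Theorem \ref{thm1} to the lacunary denominator sequences of the $\alpha_i$ with the Kaufman--Queff\'elec--Ramar\'e measures on the bounded-partial-quotient sets, use the discrepancy bound (uniform in $\gamma$) to count hits of shrinking arcs, convert indices to $\log q$ via $\log q_j^{(i)}\asymp_i j$ and use $q\|q\alpha_i\|<1$, and finish with the mass distribution principle to get full Hausdorff dimension. The only differences are cosmetic (union over $M$ of conull sets versus a single set shown to be conull for every such measure, and settling for the $(\log q)^{-1/2+\varepsilon}$ bound rather than the sharper $(\log_3 q)^{1/2+\varepsilon}(\log q)^{-1/2}$ rate, which, as you note, the same calculation yields).
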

The proof of the result in \cite{hjk} relies on a metric discrepancy estimate with respect to certain probability measures supported on subsets of the set $\Bad$. More precisely, if $F_N:=\{x=[a_1,a_2,\ldots] \in[0,1) : a_n\leq N \text{ for all } n\geq 1\}$ is the set of $x\in [0,1)$ such that the partial quotients in the continued fraction expansion of $x$ are at most equal to $N$, a theorem of Kaufman \cite{Kaufman}, later improved by Queff\'elec and Ramar\'e \cite{QR}, states that the sets $F_N, \, N\geq 2$ support probability measures with two key properties:

\begin{kqr}{\em Let $N\geq 2$. If $\varepsilon>0$ and $\frac{1}{2}<\delta<\dim F_N$, then the set $F_N$ supports a probability measure $\mu=\mu(N,\delta,\varepsilon)$ with the following properties:\vspace{-2.5mm}
\begin{description}
\item{(i)} $\mu (I) \leq c_1|I|^{\delta}$ for any interval $I\subseteq [0,1)$, and  \vspace{-1.1mm}       
\item{(ii)} $|\widehat{\mu}(t)| \leq c_2(1+|t|)^{-\eta+8\varepsilon}$\, for all $t\in\mathbb{R},$ where $\eta=\frac{\delta(2\delta-1)}{(2\delta+1)(4-\delta)} >0 $. \vspace{-2mm}
\end{description} Here $c_1,c_2>0$ are absolute constants. }
\end{kqr}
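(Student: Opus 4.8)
We sketch the construction; all of it is built on the dynamics of the continued fraction (Gauss) map.

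The plan is to realize $F_N$ as the attractor of the conformal iterated function system $\{\phi_a : 1\le a\le N\}$ on $[0,1)$ with $\phi_a(x)=1/(a+x)$, so that the depth-$n$ cylinders $I(a_1,\dots,a_n)=\phi_{a_1}\!\circ\cdots\circ\phi_{a_n}([0,1))$ are the usual continued-fraction intervals, with $|I(a_1,\dots,a_n)|=1/\big(q_n(q_n+q_{n-1})\big)\asymp q_n^{-2}$ where $p_n/q_n=[0;a_1,\dots,a_n]$. For $\mu$ I would take the (normalized) equilibrium state of the potential $-\delta\log|\phi'|$ for this system, equivalently the conformal measure of its transfer operator. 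Since all digits satisfy $a_i\le N$, one has uniform bounded distortion, and by Bowen's formula the pressure $P(-\delta\log|\phi'|)$ is strictly positive exactly when $\delta<\dim F_N$; hence the eigenfunction and eigenmeasure of the transfer operator exist and are comparable to constants, and the resulting probability measure $\mu$ on $F_N$ obeys the Gibbs bound $\mu\big(I(a_1,\dots,a_n)\big)\asymp e^{-nP}\,|I(a_1,\dots,a_n)|^{\delta}$. In particular, since $P>0$, $\mu(I(a_1,\dots,a_n))\ll|I(a_1,\dots,a_n)|^{\delta}$; and for an arbitrary interval of length $\ell$, choosing $n$ so that the cylinders meeting it have length $\asymp\ell$ (possible because $q_{n+1}/q_n$ is bounded) and summing over the $O(1)$ such cylinders gives property (i), $\mu(I)\le c_1\ell^{\delta}$. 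This part is routine thermodynamic formalism.

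The substance is property (ii). For $|t|$ large, expand $\what\mu(t)=\sum_{(a_1,\dots,a_n)}\int_{I(a_1,\dots,a_n)}e(tx)\,\mathrm{d}\mu(x)$ with depth $n\asymp\log|t|$, and use conformality of $\mu$ on each cylinder, $\mathrm{d}\mu|_{I(a_1,\dots,a_n)}=\mu(I(a_1,\dots,a_n))\cdot(\phi_{a_1}\!\circ\cdots\circ\phi_{a_n})_{*}\mu$, to rewrite each term as $\mu(I(a_1,\dots,a_n))\int e\!\big(t\,\psi_{a_1,\dots,a_n}(y)\big)\,\mathrm{d}\mu(y)$ with the Möbius phase $\psi_{a_1,\dots,a_n}(y)=(p_n+p_{n-1}y)/(q_n+q_{n-1}y)$. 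The key point is the genuine nonlinearity of the branches: the linearization $\psi_{a_1,\dots,a_n}(y)=\tfrac{p_n}{q_n}+\tfrac{(-1)^n y}{q_n(q_n+q_{n-1}y)}$ shows that peeling off one further digit $a_{n+1}=1/(y'+\cdots)$ turns the tail-dependence into $\xi\mapsto \sum_{a=1}^{N}e\big(\xi/(a+c)\big)$ with $|\xi|\asymp|t|/q_n$ in a controlled dyadic range; since $a\mapsto 1/(a+c)$ has nonvanishing second difference (equivalently the Schwarzian of each $\phi_a$ is bounded away from $0$), van der Corput's inequality, or stationary phase, yields cancellation of order $N^{1-\kappa}$ rather than the trivial $N$ in each such block.

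Grouping the $n$ digits into $\asymp\log|t|$ blocks and multiplying the per-block gains against the dimension budget $\mu(I(a_1,\dots,a_n))\asymp q_n^{-2\delta}$ then gives $|\what\mu(t)|\ll|t|^{-\eta'}$ for some $\eta'>0$; here it is precisely the hypothesis $\delta>\tfrac12$ that makes the net exponent positive (for $\delta\le\tfrac12$ the entropy of the digit strings outweighs the van der Corput gain, consistent with $\eta$ vanishing at $\delta=\tfrac12$). Optimizing the block length against the exponent in the one-digit sum, and absorbing the distortion constants and the slack between $n$ and $\log|t|$ into the $8\varepsilon$ term, produces the stated exponent $\eta=\frac{\delta(2\delta-1)}{(2\delta+1)(4-\delta)}$. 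The main obstacle is exactly this optimization: one must make the oscillatory gain \emph{uniform} over all cylinders and all $|t|$, which requires quantifying the nonlinearity of arbitrarily deep Möbius compositions $\phi_{a_1}\!\circ\cdots\circ\phi_{a_n}$ uniformly — clean for continued fractions since $1\le a_i\le N$ — and then balancing two competing parameters, the truncation depth $n$ and the exponent in the elementary exponential-sum estimate, so as to extract the sharp $\eta$. By contrast, the existence and regularity of $\mu$ and the Frostman bound (i) are comparatively standard; the improvement from Kaufman's original exponent to the value above is where the more careful bookkeeping of Queff\'elec and Ramar\'e enters.
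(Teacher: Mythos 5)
First, note that this statement is not proved in the paper at all: it is quoted as a known result of Kaufman \cite{Kaufman} and Queff\'elec--Ramar\'e \cite{QR}, so there is no internal proof to compare against; your proposal has to stand on its own as a reconstruction of those works. Your treatment of the measure itself and of property (i) is fine as a sketch (pressure $P(-\delta\log|\phi'|)>0$ for $\delta<\dim F_N$, Gibbs/conformal measure, bounded distortion because the digits are at most $N$, stopping-time argument for arbitrary intervals), modulo the small inaccuracy that a Gibbs measure is only quasi-self-similar, i.e.\ the identity $\mathrm{d}\mu|_{I(a_1,\dots,a_n)}=\mu(I(a_1,\dots,a_n))\,(\phi_{a_1}\circ\cdots\circ\phi_{a_n})_*\mu$ holds only up to bounded distortion factors, not exactly.

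The genuine gap is in property (ii), which is the entire substance of the theorem. Your mechanism for cancellation is a van der Corput bound on the one-digit sum $\sum_{a=1}^{N}e(\xi/(a+c))$, claimed to save a factor $N^{\kappa}$ per block. But $N$ is a \emph{fixed} constant here, so no per-block power saving in $|t|$ is available, and, worse, the saving is not uniform in $\xi$: whenever $\xi$ times the relevant second differences is close to integers the phases cluster and the block sum is of full size $N$. Any correct proof must show that such resonant configurations are rare with respect to $\mu$ (or handle them by an $L^2$/counting argument over cylinders), and this is exactly where the hypothesis $\delta>\tfrac12$ enters quantitatively --- in Kaufman's and Queff\'elec--Ramar\'e's arguments through counting continued fractions with bounded partial quotients whose denominators lie in dyadic ranges and through bilinear estimates for $\sum_{q}\widehat{\mu}$-type sums, not through a pointwise per-digit oscillation gain. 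Your proposal replaces this step by the assertion that ``optimizing the block length\dots produces the stated exponent $\eta=\frac{\delta(2\delta-1)}{(2\delta+1)(4-\delta)}$,'' i.e.\ the key estimate and the derivation of the exponent (and of the $8\varepsilon$ loss) are never carried out; the remark that $\delta>\tfrac12$ is needed because ``entropy outweighs the gain'' is a heuristic, not an argument. As written, the proof of (ii) would not go through, and with it the theorem.
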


In the current paper, adapting the method of proof in \cite{hjk} together with the sharper discrepancy estimate coming from Theorem \ref{thm1}, we are able to obtain a slight improvement to the result of Haynes, Jensen and Kristensen: 

\begin{thm} \label{thm3}  Let $\varepsilon>0$ be fixed and $(\alpha_j)_{j=1}^{\infty}\subseteq \Bad$ be a sequence of badly approximable numbers. There exists a subset $G\subseteq\Bad$ of Hausdorff dimension $\dim G=\dim\Bad=1$ such that for any $\beta\in G,$ the following holds: 
\begin{equation}
q \|q\alpha_j\| \|q\beta-\gamma\| < \frac{(\log_3 q)^{1/2 + \varepsilon}}{(\log q)^{1/2}} \hspace{5mm} \text{ for inf. many  } q=1,2,\ldots
\end{equation}
for all $j=1,2,\ldots$ and for all $\gamma\in [0,1)$. 
\end{thm}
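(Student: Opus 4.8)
The plan is to feed the sharp discrepancy bound of Theorem~\ref{thm1} into the argument of \cite{hjk}, replacing the rate $(\log q)^{-1/2+\varepsilon}$ used there by the near-optimal rate coming from the law of the iterated logarithm. Fix a large integer $M$, a small $\varepsilon_0>0$, and $\tfrac12<\delta_M<\dim F_M$, and let $\mu=\mu_M$ be the probability measure on $F_M$ provided by the Kaufman--Queff\'elec--Ramar\'e theorem, so that $\mu(I)\le c_1|I|^{\delta_M}$ for every interval $I$ and $|\widehat\mu(t)|\ll(1+|t|)^{-\eta'}$ with $\eta'=\eta-8\varepsilon_0>0$; recall that $F_M\subseteq\Bad$. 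For each $j$ let $M_j$ be an upper bound for the partial quotients of $\alpha_j\in\Bad$, and let $(m_k^{(j)})_{k\ge1}$ denote the even-indexed convergent denominators $q_{2k}$ of $\alpha_j$. Then $(m_k^{(j)})_k$ is lacunary with ratio $\ge2$, one has $m_k^{(j)}\|m_k^{(j)}\alpha_j\|<1$ for every $k$, and — here the boundedness of the partial quotients is used — $\log m_k^{(j)}\asymp k$ with implied constants depending on $j$. Applying Theorem~\ref{thm1} to $\mu$ and to each sequence $(m_k^{(j)})_k$, and intersecting the resulting full-measure sets over $j$, we obtain a set $G_M$ with $\mu(G_M)=1$ such that for every $\beta\in G_M$ and every $j$,
\begin{equation*}
D_N\!\big(m_k^{(j)}\beta\big)\ \le\ C_j\,\sqrt{\tfrac{\log\log N}{N}}\qquad\text{for all }N\ge N_0(j,\beta).
\end{equation*}

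Next comes a covering argument. Fix $\beta\in G_M$, an index $j$, and a point $\gamma\in[0,1)$. Comparing, via the bound above, the initial segment of length $N$ of $(m_k^{(j)}\beta)_k$ with the one of length $\lfloor N/2\rfloor$, one finds that every arc of $[0,1)$ of length $K_j\sqrt{\log\log N/N}$ contains at least one of the points $m_k^{(j)}\beta$ with $\tfrac N2<k\le N$, provided $K_j$ is a sufficiently large constant and $N$ is large enough — the number of such points lying in the arc being bounded below by a positive $j$-dependent multiple of $\sqrt{N\log\log N}$, which tends to infinity. Applying this to the arc centred at $\gamma$ gives an index $k=k_N\in(\tfrac N2,N]$ with $\|m_{k_N}^{(j)}\beta-\gamma\|\le K_j\sqrt{\log\log N/N}$; since $k_N>N/2\to\infty$, letting $N$ run through all large integers produces infinitely many such indices.

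For one of these indices write $q=m_k^{(j)}$. From $\tfrac N2<k\le N$ and $\log q\asymp k$ we get $N\asymp\log q$ and hence $\log\log N\asymp\log_3 q$, so that $\sqrt{\log\log N/N}\le C_j'(\log_3 q)^{1/2}(\log q)^{-1/2}$. Since $q\|q\alpha_j\|<1$, this yields
\begin{equation*}
q\,\|q\alpha_j\|\,\|q\beta-\gamma\|\ \le\ \|q\beta-\gamma\|\ \le\ C_j'\,\frac{(\log_3 q)^{1/2}}{(\log q)^{1/2}}\ <\ \frac{(\log_3 q)^{1/2+\varepsilon}}{(\log q)^{1/2}}
\end{equation*}
for every sufficiently large $q$ in our infinite set, because $(\log_3 q)^{\varepsilon}\to\infty$ swamps the constant $C_j'$. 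This holds for every $j$ and every $\gamma\in[0,1)$, so each $\beta\in G_M$ has the property claimed in the theorem.

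Finally, since $\mu(G_M)=1$ and $\mu(I)\le c_1|I|^{\delta_M}$, the mass distribution principle gives $\dim G_M\ge\delta_M$; because $\dim F_M\to1$ as $M\to\infty$ we may choose $\delta_M\to1$, and then $G:=\bigcup_{M\ge2}G_M$ satisfies $G\subseteq\bigcup_{M\ge2}F_M\subseteq\Bad$ and $\dim G=1=\dim\Bad$, completing the proof. The one place that needs care is the last rescaling: turning the law-of-the-iterated-logarithm rate $\sqrt{\log\log N/N}$ into the target $(\log_3 q)^{1/2+\varepsilon}/(\log q)^{1/2}$ relies on the two-sided estimate $\log m_k^{(j)}\asymp k$ (its upper half being exactly where $\alpha_j\in\Bad$ enters on the $\alpha$-side), together with the windowing $\tfrac N2<k\le N$ that forces the approximating indices to tend to infinity.
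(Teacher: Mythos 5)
Your proposal is correct and follows essentially the same route as the paper: Kaufman--Queff\'elec--Ramar\'e measures on $F_M\subseteq\Bad$, the discrepancy bound of Theorem~\ref{thm1} applied to the (lacunary) convergent denominators of each $\alpha_j$, the estimate $q_n\|q_n\alpha_j\|<1$ together with $\log q_n\asymp n$ to convert the rate $\sqrt{\log\log N/N}$ into $(\log_3 q)^{1/2}(\log q)^{-1/2}$, and the mass distribution principle to get dimension $1$. The only differences are cosmetic: you extract the infinitely many good denominators by a dyadic window $\tfrac N2<k\le N$ rather than the paper's minimal hitting times $N_k^{\gamma}$, and you absorb the constant via $(\log_3 q)^{\varepsilon}\to\infty$ rather than by building the extra $\varepsilon$ into $\psi(N)$; both devices are equally valid.
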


As another consequence of Theorem \ref{thm1}, we obtain a statement regarding the Fourier dimension of the set of exceptions to Philipp's theorem, namely the set of $x\in[0,1)$ for which \eqref{philequation} fails. Given a subset $F\subseteq[0,1)$, the {\em Fourier dimension} of $F$ is defined by
\begin{equation*}
\dim_F F  := \sup\left\{0\leq \eta\leq 1:  \exists \, \mu \in M_1^+(F)\, \text{ {\rm with }} \hspace{1mm}  \widehat{\mu}(t)= O(|t|^{-\eta/ 2}),\, |t|\to\infty \  \right\}. \vspace{2mm}
\end{equation*}
where $M_1^+(F)$ denotes the set of all positive Borel probability measures with support in  $F$.  A result of Frostman states that whenever a set $F\subseteq[0,1)$ supports a probability measure $\mu$ such that $\widehat{\mu}(t)= O(|t|^{-\eta/ 2})$,  then its Hausdorff dimension is $\dim F  \geq \min \{1, \eta\} $ (see \cite[Chapter 4]{falc}). Therefore, the Fourier dimension of an arbitrary set is bounded above by the Hausdorff dimension: $\dim_F F \leq \dim F$.

\begin{cor}\label{cor} Let $(n_k)_{k=1}^{\infty}$ be a lacunary sequence of integers and consider the set 
\begin{equation*}
A \, = \, \left\{x\in [0,1): \limsup_{N\to\infty} N^{1/2}(\log\log N)^{-1/2}D_N(n_kx)=0 \text{ or } \infty  \right\} 
\end{equation*}
of $x\in [0,1)$ for which the conclusion \eqref{philequation} in Philipp's Theorem fails. Then the Fourier dimension of $A$ is $\dim_F A=0$.
\end{cor}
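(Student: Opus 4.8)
The plan is to argue by contradiction, exploiting that Theorem~\ref{thm1} applies to \emph{any} probability measure on $[0,1)$ whose Fourier transform decays polynomially at a positive rate. Suppose $\dim_F A>0$. By the definition of Fourier dimension there are then a constant $\eta>0$ and a Borel probability measure $\mu\in M_1^+(A)$ (so $\supp\mu\subseteq A$) with $\widehat{\mu}(t)=O(|t|^{-\eta/2})$ as $|t|\to\infty$. In particular $\mu$ satisfies the decay hypothesis \eqref{mudecay} with exponent $\eta/2>0$, so Theorem~\ref{thm1} yields that the two-sided bound \eqref{thm1eq} holds for $\mu$-almost every $x\in[0,1)$.

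Fix such an $x$. Then $\limsup_{N\to\infty} N^{1/2}(\log\log N)^{-1/2}D_N(n_kx)$ lies in $[\tfrac14,C]$, hence is a finite and strictly positive real number; in particular it equals neither $0$ nor $\infty$, so $x\notin A$. Since this holds for $\mu$-almost every $x$, we conclude $\mu(A)=0$. On the other hand, the complement of $\supp\mu$ is an open $\mu$-null set, so $\mu(A)\ge\mu(\supp\mu)=1$. This contradiction shows $\dim_F A=0$. (If one prefers to interpret $M_1^+(A)$ via $\mu(A)=1$ rather than $\supp\mu\subseteq A$, the same contradiction $\mu(A)=0\neq 1$ arises.) Here we use that $A$ is Borel: for each $N$ the map $x\mapsto D_N(n_kx)$ is a supremum over rational pairs $(\alpha,\beta)$, hence Borel measurable, so the $\limsup$ is Borel and $A$ is a union of two of its level sets.

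The argument is immediate once Theorem~\ref{thm1} is in hand, so there is no genuine obstacle beyond the measurability remark above. The one point worth emphasising is that the full strength of Theorem~\ref{thm1} --- a bound that is simultaneously bounded below by $1/4$ and above by a finite constant --- is exactly what is needed: it excludes \emph{both} failure modes in the definition of $A$ at once for $\mu$-almost every $x$, and this is precisely what forces $\mu(A)=0$ and hence rules out any positive-order polynomial Fourier decay for measures carried by $A$.
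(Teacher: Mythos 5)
Your argument is correct and is essentially identical to the paper's own proof: assume $\dim_F A>0$, extract a measure on $A$ with polynomial Fourier decay, and apply Theorem~\ref{thm1} to contradict the definition of $A$. The added remarks on Borel measurability of $A$ and on the interpretation of $M_1^+(A)$ are fine but not needed beyond what the paper already asserts.
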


The proof of Corollary \ref{cor} is straightforward: suppose $\dim_FA>0$. Then $A$ supports some probability measure $\mu$ such that $\widehat{\mu}(t) = O(|t|^{-\eta/2}),\, |t|\to\infty$ for some $\eta>0$, and by Theorem \ref{thm1}, \eqref{thm1eq} holds for $\mu$--almost all $x\in A$, which contradicts the definition of $A$.

However, regarding the Hausdorff dimension of the set $A$ defined in Corollary \ref{cor}, it can be deduced that $\dim A=1$. Indeed, given a lacunary sequence $(n_k)_{k=1}^{\infty}$, a result obtained independently by Pollington \cite{poll} and de Mathan \cite{demathan} states that the set $B := \{x\in[0,1) : (n_kx)_{k=1}^{\infty} \text{ is not dense}  \mod 1\}$ has Hausdorff dimension $1$. Thus $\dim A = 1$ as well, since $B\subseteq A$.

%As another consequence of Theorem \ref{thm1}, we obtain the following discrepancy statement with additional information on the Hausdorff dimension of the set of numbers for which it is satisfied. 
%
%\begin{thm} \label{thm2}
%Let $(n_k)_{k=1}^{\infty}$ be a lacunary sequence of integers satisfying \eqref{lacunary}. There exists a set $G\subseteq \Bad$ with Hausdorff dimension $\dim G = 1$ such that  
%\begin{equation}
%\limsup_{N\to\infty} \frac{ND_N(n_kx)}{\sqrt{N\log\log N}} \leq C, \hspace{5mm} \text{ for all }x\in G. 
%\end{equation}
%\end{thm}

\section{Proof of Theorem \ref{thm1}}
\subsection{The upper bound}
\noindent We employ a classical method of proof for discrepancy estimates, which has been used by Philipp \cite{philipp}, Erd\H{o}s $\&$ Gal \cite{erdosgal} and Gal $\&$ Gal \cite{galgal}. For any positive integer $N$, the discrepancy $D_N(n_kx)$ satisfies  
\begin{equation*}
ND_N(n_kx) = \sup_{0\leq \alpha<\beta<1}\left|\sum_{k\leq N}\chi_{[\alpha,\beta)}(n_kx)-N(\beta-\alpha) \right| \leq 2\sup_{f\in \ca{F}}\left|\sum_{k\leq N}f(n_kx) \right|,
\end{equation*}
where $\ca{F}$ denotes the set of functions $f:[0,1)\rightarrow\mathbb{R}$ which are $1$-periodic with $\int_{0}^{1}\!f(x)\mathrm{d}x=0$ and have bounded variation. Moreover, since every such function is trivially the sum of an even and an odd function with the same properties, we may restrict our attention to the set $\ca{F}^*\subseteq\ca{F}$ of functions $f\in\ca{F}$ which are additionally even. 
\subsubsection{Some auxiliary results}
\noindent Let $f$ be an even function of bounded variation on $[0,1]$ such that
\begin{equation} \label{fassumptions}
\text{Var}f \leq 2, \hspace{5mm} \|f\|_{\infty}\leq 1, \hspace{5mm} f(x+1) = f(x) \hspace{5mm} \text{ and } \hspace{5mm} \int_{0}^{1}f(x)\mathrm{d}x = 0 \,  
\end{equation} and let $$f(x)\, = \, \sum_{j=1}^{\infty}2c_j\cos(2\pi jx)\, = \, \sum_{|j|=1}^{\infty}c_j\, e(jx) $$ be its Fourier series expansion. Observe that $c_j=c_{-j}$ for all $j$ and \eqref{fassumptions} imposes $c_0 = 0$ and $c_j \leq |j|^{-1}$. We set 
$$ f_n(x) = \sum_{1\leq |j|\leq n}\hspace{-2mm}c_j\, e(jx) \, .$$
\noindent Write $r=q^{1/2}$ (where $q>1$ is as in \eqref{lacunary}) and define
\begin{eqnarray*}
\phi_n(x) = f(x) - f_n(x), \\[1ex]
\psi(x;m) = \sum_{r^m\leq |j|<r^{m+1}}\hspace{-5mm}c_j\,e(jx), \\[1ex]
\Phi_N(x;m) = \sum_{k\leq N}\psi(n_kx;m) .  
\end{eqnarray*}
Also for $f\in L_2(\mu)$ we put $\|f\|_{L_2(\mu)}^2 =  \int |f|^2\mathrm{d}\mu.$ In the following lemmas we calculate the $\|\cdot \|_{L_2(\mu)}$--norm of sums of the form $\sum_{k\leq N}\phi_T(n_kx)$, first by calculating the norm of the blocks $\Phi_N(x;m)$ in Lemma \ref{lem1}, and then combining these estimates in Lemma \ref{lem2}. 
\begin{lem} \label{lem1} We have 
$$ \int|\Phi_N(x;m)|^2\mathrm{d}\mu(x) \, \ll \, \frac{N}{r^{m\eta}} \,   , $$
where $\eta>0$ is as in \eqref{mudecay}.
\end{lem}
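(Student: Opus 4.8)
The idea is to expand the square $|\Phi_N(x;m)|^2$ and integrate term-by-term against $\mu$, using the Fourier decay hypothesis \eqref{mudecay} to control the off-diagonal contributions and lacunarity \eqref{lacunary} to ensure that the frequencies appearing in this expansion do not accumulate. Writing
\begin{equation*}
\int |\Phi_N(x;m)|^2 \, \mathrm{d}\mu(x) \;=\; \sum_{k,\ell \leq N}\; \sum_{\substack{r^m \leq |i|,|j| < r^{m+1}}} c_i \overline{c_j}\, \widehat{\mu}(n_k i - n_\ell j),
\end{equation*}
I would split the sum into the ``diagonal'' part, where $n_k i = n_\ell j$, and the ``off-diagonal'' part, where $n_k i \neq n_\ell j$. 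For the diagonal part one uses $|c_j| \leq |j|^{-1} \leq r^{-m}$ together with a counting argument: for fixed $k,\ell$ the number of pairs $(i,j)$ with $n_k i = n_\ell j$ and both $|i|,|j|$ in the dyadic-type block $[r^m, r^{m+1})$ is bounded (in fact, by lacunarity, for most pairs $(k,\ell)$ there are none), and summing $r^{-2m}$ over the $O(N \cdot r^m)$ surviving triples gives $O(N r^{-m})$. Actually it is cleaner to bound the diagonal contribution by $\sum_{k\le N}\sum_{r^m\le|j|<r^{m+1}}|c_j|^2 \le \sum_{k \le N} r^m \cdot r^{-2m} = N r^{-m}$ after noting that the map $(k,i)\mapsto n_k i$ largely determines $(\ell,j)$; since $\eta$ may be taken $\le 1$ without loss of generality, $N r^{-m} \leq N r^{-m\eta}$, which is the desired bound.

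For the off-diagonal part, I would apply the hypothesis $|\widehat{\mu}(t)| \ll |t|^{-\eta}$ to each term with $n_k i - n_\ell j \neq 0$, reducing the problem to the purely arithmetic estimate
\begin{equation*}
\sum_{k,\ell \leq N}\; \sum_{\substack{r^m \leq |i|,|j| < r^{m+1} \\ n_k i \neq n_\ell j}} \frac{|c_i|\,|c_j|}{|n_k i - n_\ell j|^{\eta}} \;\ll\; \frac{N}{r^{m\eta}}.
\end{equation*}
Here I would use $|c_i|,|c_j| \leq r^{-m}$ to pull out a factor $r^{-2m}$, then fix $k \leq N$ and one frequency, say $i$, and estimate $\sum_{\ell,j} |n_k i - n_\ell j|^{-\eta}$. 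The key mechanism is that because $(n_\ell)$ is lacunary with ratio $q = r^2$ and $|j| < r^{m+1}$, the quantities $n_\ell j$ for $\ell$ far from $k$ are geometrically separated from $n_k i$ on a scale comparable to $n_k r^m$; when $\ell$ is close to $k$ one still gets separation of the differences $n_k i - n_\ell j$ as $i,j$ vary, of order at least $\min(n_k, n_\ell)$. Summing the resulting geometric-type series in $\ell$ (and in the "diagonal band" $|\ell - k| = O(1)$, summing over $i,j$) produces a bound of the form $N \cdot r^m \cdot r^{-m\eta}\cdot(\text{something}) $ before the $r^{-2m}$ factor; combined with $r^{-2m}$ this should collapse to $O(N r^{-m\eta})$, using $r^{m(1-\eta)} \ge 1$.

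The main obstacle I anticipate is the careful bookkeeping in the off-diagonal sum when $k$ and $\ell$ are close: there one cannot exploit the geometric gap between $n_k$ and $n_\ell$, and must instead argue that, for fixed $k$ and fixed $i$, the integers $n_\ell j - n_k i$ are distinct nonzero integers as $j$ ranges over the block (possibly with bounded multiplicity depending only on $q$), so that $\sum_j |n_k i - n_\ell j|^{-\eta} \ll \sum_{t \geq 1} t^{-\eta}$ — but this last sum \emph{diverges} when $\eta \leq 1$, so one must instead cap the range of $t$ at $O(n_k r^m)$, yielding $\ll (n_k r^m)^{1-\eta}$, and then check that the factor $n_k^{1-\eta}$ is harmless because it is dominated using $|\widehat\mu(t)|\ll|t|^{-\eta}$ more carefully, or equivalently by grouping the frequencies $n_k i$ by dyadic size and exploiting that there are $\ll r^m$ of them in each relevant scale. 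Getting the exponents of $r^m$ to balance exactly to $r^{-m\eta}$ (and not, say, $r^{-m\eta}\log$ or $r^{m(1-2\eta)}$) is the delicate point, and may require treating the cases $\eta \geq 1$ and $\eta < 1$ slightly differently, or reducing at the outset to $\eta \in (0,1]$.
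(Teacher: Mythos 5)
Your opening moves coincide with the paper's: expand $|\Phi_N(x;m)|^2$, integrate term by term against $\mu$, split into the diagonal $in_k=jn_\ell$ and the off-diagonal part, and apply \eqref{mudecay} to the latter. Your treatment of the diagonal (only $k=\ell$, $i=j$ survives, contribution $\ll N r^{-m}\le Nr^{-m\eta}$ after reducing to $\eta\le 1$) is fine and matches the paper.

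The gap is in the off-diagonal estimate, which you leave as a plan and whose sketched resolution would fail. Two points. First, the case you flag as delicate --- $\ell$ close to $k$ but $\ell\neq k$ --- is not delicate at all; this is exactly why the frequency blocks are taken with ratio $r=q^{1/2}$ rather than $q$. For any $\ell<k$, even $\ell=k-1$, one has $|j|n_\ell\le r^{m+1}n_k/q=r^{m-1}n_k\le |i|n_k/r$, hence $|in_k-jn_\ell|\ge |i|n_k(1-r^{-1})\gg r^m n_k$ uniformly. Summing $(r^mn_k)^{-\eta}$ over the $\ll r^{2m}$ pairs $(i,j)$ (with $c_ic_j\ll r^{-2m}$) and then over the pairs $\ell<k\le N$ using the geometric decay of $n_k^{-\eta}$ gives $\ll r^{-m\eta}$ with no case analysis. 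Second, and more seriously, the genuinely nontrivial case is $k=\ell$, $i\ne j$, and there your proposed bound is wrong: you treat the nonzero differences as distinct integers filling $[1,O(n_kr^m)]$ and cap $\sum_t t^{-\eta}$ at $O(n_kr^m)$, obtaining $(n_kr^m)^{1-\eta}$. This discards the fact that these differences are multiples of $n_k$, i.e. $|n_k(i-j)|^{-\eta}=n_k^{-\eta}|i-j|^{-\eta}$ with $|i-j|\le r^{m+1}$, which yields $n_k^{-\eta}\,r^{m(1-\eta)}$ per fixed $i$. Your version instead leaves you with $\sum_{k\le N}n_k^{1-\eta}\ge q^{(N-1)(1-\eta)}$, which is exponentially large in $N$ and cannot be made "harmless" by any regrouping. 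With the correct factorisation, $\sum_k n_k^{-\eta}$ converges by lacunarity and the remaining $(i,j)$-sum collapses to $r^{-m\eta}$. So the architecture of your plan is right, but the two structural facts that make the exponents close --- the uniform separation $|in_k-jn_\ell|\gg r^mn_k$ for $k\ne\ell$ afforded by the choice $r=q^{1/2}$, and the factorisation $n_k^{-\eta}|i-j|^{-\eta}$ for $k=\ell$ --- are both absent, and the workaround sketched in their place does not give the stated bound.
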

\begin{proof}
We calculate
\begin{eqnarray*}
\int|\Phi_N(x;m)|^2\mathrm{d}\mu(x) &=& \mathop{\mathop{\sum\sum}_{1\leq k,l\leq N}}_{r^m\leq |i|,|j|<r^{m+1}}\hspace{-4mm}\int c_i c_j e((in_k-jn_l)x)\mathrm{d}\mu(x) \\
& \ll &  \mathop{\mathop{\sum\sum}_{1\leq k,l \leq N}}_{r^m\leq |i|,|j|<r^{m+1}}\hspace{-3mm}c_i c_j \delta_{in_k, jn_l} +   
\mathop{\mathop{ \mathop{\sum\sum}_{1\leq k,l\leq N}}_{r^m\leq |i|,|j|<r^{m+1}} }_{|in_k-jn_l| \geq 1}\hspace{-3mm}\frac{c_i c_j}{|in_k-jn_l|^{\eta}}.    
\end{eqnarray*}
Regarding the first of these terms, we can show as in \cite[Lemma $1$]{philipp} that it has order of magnitude 
$$ \ll N\log\left(\frac{r^{m+1}}{r^m}\right)\frac{1}{r^m} \ll \frac{N}{r^m}\,  . $$
The second term is 
\begin{equation}\label{asum}  \mathop{\mathop{ \mathop{\sum\sum}_{1\leq k,l\leq N}}_{r^m\leq |i|,|j|<r^{m+1}} }_{|in_k-jn_l| \geq 1}\hspace{-4mm}\frac{c_i c_j}{|in_k-jn_l|^{\eta}}  \ll \hspace{-4mm}\mathop{\mathop{\mathop{\sum\sum}_{1\leq k\leq N}}_{r^m\leq |i|,|j|<r^{m+1}}}_{i \neq j}\hspace{-4mm}c_ic_j |n_k(i-j)|^{-\eta} + \hspace{-4mm} \mathop{\mathop{ \mathop{\sum\sum}_{1\leq l<k\leq N}}_{r^m\leq |i|,|j|<r^{m+1}} }_{|in_k-jn_l| \geq 1}\hspace{-4mm}\frac{c_i c_j}{|in_k-jn_l|^{\eta}} \, \cdot
\end{equation}
\noindent The first of the sums in the right hand side of \eqref{asum} is at most
\begin{eqnarray*}
\mathop{\mathop{\mathop{\sum\sum}_{1\leq k\leq N}}_{r^m\leq i,j<r^{m+1}}}_{i\neq j} \hspace{-4mm}c_{i}c_{j}\left(n_{k}\left|i-j\right|\right)^{-\eta} & = & 2\left(\sum_{k\leq N}n_{k}^{-\eta}\right)\mathop{\sum\sum}_{r^m\leq i<j<r^{m+1}}\hspace{-4mm}c_{i}c_{j}\left|i-j\right|^{-\eta} \\[-2.6ex]
% & \ll & \mathop{\sum\sum}_{m^L<i<j<(m+1)^L}c_{i}c_{j}\left|i-j\right|^{-\eta}\\[1ex]
 & \ll & \mathop{\sum\sum}_{r^m\leq i<j<r^{m+1}} (c_i^2 + c_j^2) \left(j-i\right)^{-\eta}\\ 
 & \ll & \sum_{r^m\leq i<r^{m+1}}\frac{1}{i^{2}}\sum_{ j=1}^{r^{m+1}-i}\frac{1}{j^{\eta}}\\[1ex]
 & \ll & \sum_{r^m\leq i<r^{m+1}}\frac{1}{i^{2}}\left(r^{m+1}-i\right)^{1-\eta}\\[1ex]
 & \ll & \sum_{i=r^m}^{\infty}\frac{1}{i^{2}}r^{(1-\eta)m} \\[1ex] 
 & \asymp & \frac{1}{r^{ m\eta }} \cdot
\end{eqnarray*}
\noindent Regarding the second sum in the right hand side of \eqref{asum}, under the conditions of summation we get \vspace{-2mm}
\begin{eqnarray*}
|in_k - jn_l| =|i|n_k\left|1-\frac{jn_l}{in_k}\right| \geq |i|n_k\left|1-\frac{1}{r}\right| \gg r^mn_k \, ,
\end{eqnarray*}
whence 
\begin{eqnarray*}
\mathop{\mathop{\sum\sum}_{1\leq l<k\leq N}}_{r^m\leq |i|,|j|< r^{m+1}}\hspace{-3mm}c_i c_j |in_k-jn_l|^{-\eta} & \ll & \sum_{l< N}\sum_{i=r^m}^{r^{m+1}}\frac{1}{i^2} \sum_{k=l+1}^{N}\sum_{j=r^m}^{r^{m+1}}\frac{1}{r^{m\eta}\,n_k^{\eta}} \\[-1.5ex]
 & \ll & \sum_{l\leq N}\sum_{i=r^m}^{r^{m+1}}\frac{1}{i^2} \cdot \frac{r^m}{r^{m\eta}\,n_l^{\eta}}\\
 & \ll & \frac{1}{r^{m\eta }} \cdot
\end{eqnarray*}
 
%Fix $k\neq N$. Then, 
%\begin{align*}
%\sum_{l<k}\sum_{m<i,j<n}c_{i}c_{j}\left|in_{k}-jn_{l}\right|^{-\eta} & =\sum_{l<k}\sum_{m<i,j<n}c_{i}c_{j}\left|in_{k}-jn_{l}\right|^{-\eta}\\
% & \ll\sum_{d}\sum_{m<i,j<n}c_{i}c_{j}d^{-\eta}\log\left(in_{k}-d\right)\\
% & \ll\sum_{l}\sum_{j}\sum_{m<i,j<n}c_{i}c_{j}\frac{1}{i^{\eta}\left(n_{k}-\frac{j}{i}n_{l}\right)^{\eta}}\log\left(in_{k}\right)
%\end{align*}

\end{proof}

\begin{lem} \label{lem2} 
For any positive integer $N,T$ large enough we have
\begin{equation*}
\int\!\left(\sum_{k\leq N}\phi_T(n_kx) \right)^2\!\mathrm{d}\mu(x)\, \ll \,  \frac{N}{T^{\eta}} \, \cdot
\end{equation*}
\end{lem}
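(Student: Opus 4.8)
The plan is to exploit the \emph{geometric} decay in $m$ of the single-block bound of Lemma~\ref{lem1}. Recall $\phi_T(x)=f(x)-f_T(x)=\sum_{|j|>T}c_j\,e(jx)$, and that the lacunary windows $\{j\in\mathbb{Z}:r^m\leq|j|<r^{m+1}\}$, $m\geq 0$, partition $\{j:|j|\geq 1\}$. Let $M=M(T)$ be the smallest integer with $r^{M}>T$; since $r>1$ this forces $r^{M-1}\leq T$, so every frequency $j$ with $T<|j|<r^{M}$ lies in the window $[r^{M-1},r^{M})$. Hence
\[
\phi_T(x)\;=\;\sigma(x)\;+\;\sum_{m\geq M}\psi(x;m),\qquad \sigma(x):=\sum_{T<|j|<r^{M}}c_j\,e(jx),
\]
where $\sigma$ is a sub-sum of the block $\psi(\cdot;M-1)$. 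Summing over $k\leq N$ and applying Minkowski's inequality in $L_2(\mu)$ — legitimate because $\sum_{m\geq M}\|\Phi_N(\cdot;m)\|_{L_2(\mu)}<\infty$ by Lemma~\ref{lem1}, so the block series converges in $L_2(\mu)$ — gives
\[
\Big\|\sum_{k\leq N}\phi_T(n_k\cdot)\Big\|_{L_2(\mu)}\;\leq\;\Big\|\sum_{k\leq N}\sigma(n_k\cdot)\Big\|_{L_2(\mu)}\;+\;\sum_{m\geq M}\big\|\Phi_N(\cdot;m)\big\|_{L_2(\mu)}.
\]

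I would then bound each term. By Lemma~\ref{lem1}, $\|\Phi_N(\cdot;m)\|_{L_2(\mu)}\ll N^{1/2}r^{-m\eta/2}$ with implied constant independent of $m$ and $N$, so
\[
\sum_{m\geq M}\big\|\Phi_N(\cdot;m)\big\|_{L_2(\mu)}\;\ll\;N^{1/2}\sum_{m\geq M}r^{-m\eta/2}\;=\;\frac{N^{1/2}\,r^{-M\eta/2}}{1-r^{-\eta/2}}\;\ll\;\frac{N^{1/2}}{T^{\eta/2}},
\]
using $r^{M}>T$ and that $1-r^{-\eta/2}$ is a positive constant depending only on $q$ and $\eta$. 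For the boundary term I would observe that the computation in the proof of Lemma~\ref{lem1} bounds every summand by its absolute value and uses only that the frequencies range over a lacunary window $[r^{m},r^{m+1})$; restricting that window to the sub-range $(T,r^{M})\subseteq[r^{M-1},r^{M})$ merely shrinks the double sums involved, so verbatim the same estimate yields $\|\sum_{k\leq N}\sigma(n_k\cdot)\|_{L_2(\mu)}\ll N^{1/2}r^{-(M-1)\eta/2}\ll N^{1/2}T^{-\eta/2}$. Adding the two contributions and squaring gives $\int\!\big(\sum_{k\leq N}\phi_T(n_kx)\big)^2\mathrm{d}\mu(x)\ll N/T^{\eta}$, which is the claim; the hypothesis that $N,T$ be large simply guarantees $M\geq 1$ and absorbs lower-order constants.

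Given Lemma~\ref{lem1}, this argument is essentially routine, and I do not anticipate a serious obstacle. The two points that need a little care are: (i) the awkward boundary block $\sigma$, for which one must note that the single-block estimate of Lemma~\ref{lem1} is insensitive to discarding frequencies from the window, so $\sigma$ costs no more than a full block; and (ii) the uniformity in $T$ of the geometric sum $\sum_{m\geq M}r^{-m\eta/2}$, whose ratio $r^{-\eta/2}$ is bounded away from $1$, so that the whole tail is controlled by its leading term of size $\asymp N^{1/2}T^{-\eta/2}$. One should also record that each $\phi_T(n_k\cdot)$ is bounded — since $\|f\|_\infty\leq 1$ forces $\|f_T\|_\infty$ and hence $\|\phi_T\|_\infty$ to be finite — so that all functions above genuinely lie in $L_2(\mu)$ and the $L_2(\mu)$-manipulations are valid.
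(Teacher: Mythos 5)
Your proof is correct and follows essentially the same route as the paper: the authors also split $\phi_T$ into a boundary piece supported on the partial window between $T$ and the next power of $r$ plus the full blocks $\psi(\cdot;m)$ for $m\geq m_0$, apply the triangle inequality in $L_2(\mu)$, handle the boundary piece by rerunning the Lemma~\ref{lem1} argument on a sub-window, and sum the resulting geometric series. The only differences are cosmetic (your $M$ is their $m_0$, and you spell out the convergence and uniformity points that they leave implicit).
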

\begin{proof}
Let $m_0$ be the positive integer such that $r^{m_0-1}\leq T < r^{m_0}$. 
By observing that 
\begin{eqnarray*}
\phi_T(n_kx) & = & \sum_{|j|=T}^{r^{m_0}-1}c_j e(jn_k x) + \sum_{m=m_0}^{\infty} \psi(n_kx;m),
\end{eqnarray*}
we conclude that
\begin{eqnarray*}
\Big\|\sum_{k\leq N}\phi_T(n_kx) \Big\|_{L_2(\mu)} & \leq & \Big\|\sum_{k\leq N}\sum_{|j|=T}^{r^{m_0}-1}c_j e(jn_k x) \Big\|_{L_2(\mu)} + \sum_{m=m_0}^{\infty}\|\Phi_N(x;m)\|_{L_2(\mu)}.
\end{eqnarray*}
The first of the two terms is, by exploiting 
the arguments of the proof of Lemma \ref{lem1}, 
seen to be $\ll\, N^{1/2}T^{-\eta/2}$. Furthermore, the second term is, due to Lemma \ref{lem1}, up to a constant at most 
\begin{equation*}
\sum_{m=m_0}^{\infty} \frac{N^{1/2}}{r^{m\eta/2}} \, \ll \, \frac{N^{1/2}}{r^{m_0\eta/2}} \, \asymp \, \frac{N^{1/2}}{T^{\eta/2}}\, \cdot
\end{equation*}
\noindent Hence the result of the Lemma is shown.
\end{proof}

\noindent In what follows  $H, P$ and $T$ denote positive integers. We set 
\begin{equation*} g(x) = \sum_{|j|=1}^{T}c_j\,e(jx), \hspace{8mm} U_m(x) = \sum_{k=Hm+1}^{H(m+1)}g(n_kx)
\end{equation*}
Similar to inequality $(2.6)$ of \cite{philipp} we can write 
\begin{equation}\label{gnorm}
\|g\|_{\infty} \, \leq \, \text{\rm Var} f + \|f\|_{\infty} \, \leq \, 3 .
\end{equation}

\begin{lem} \label{lemma3} Let $0<\kappa <1$ and assume the integers $P,H,T$ are such that 
\begin{equation}\label{kprel}
\kappa\leq H^{-10/7},\hspace{6mm} 4PT \leq q^{2H} \hspace{4mm} \text{ and } \hspace{4mm} 3T^2H^2 < q^{\eta H} \, .
\end{equation}
Then for any $\delta>0$ we have 
\begin{equation*}
\int\exp\left(\kappa \sum_{m=0}^{P-1}U_{2m}(x) \right)\mathrm{d}\mu(x) \leq \exp\left(\textstyle\frac{1}{2}(1+2\delta)C_0\kappa^2\|f\| H (P+1)\right) \vspace{-0.5mm}
\end{equation*}
and
\begin{equation*}
\int\exp\left(\kappa \sum_{m=1}^{P}U_{2m-1}(x) \right)\mathrm{d}\mu(x) \leq \exp\left(\textstyle\frac{1}{2}(1+2\delta)C_0\kappa^2\|f\| H (P+1)\right),
\end{equation*}
where $C_0 = \frac12 + 2(q^{1/2}-1)^{-1} $ is an absolute constant. 
\end{lem}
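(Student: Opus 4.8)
The plan is to run the classical argument of Philipp \cite{philipp}, using the Fourier decay \eqref{mudecay} of $\mu$ in place of the exact orthogonality available for Lebesgue measure. I treat the first inequality; the second is identical after shifting the block index by one. Throughout, $\delta>0$ and $q>1$ are fixed, $H,P,T$ satisfy \eqref{kprel}, and $H$ is large.

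First I would record two per-block facts. Since $\|g\|_\infty\le 3$ by \eqref{gnorm} and $\kappa H\le H^{-3/7}$ by \eqref{kprel}, we have $|\kappa U_{2m}|\le 3H^{-3/7}$, so on that range $e^{t}\le 1+t+\tfrac12(1+o_H(1))t^{2}$ yields $e^{\kappa U_{2m}}\le 1+\kappa U_{2m}+\tfrac12(1+o_H(1))\kappa^{2}U_{2m}^{2}$. Next, arguing exactly as in the proof of Lemma~\ref{lem1} — separating the diagonal contribution $in_k=jn_l$, which reproduces the Lebesgue value $C_0\|f\|H$, from the off-diagonal contribution, which is bounded by $|\widehat\mu(in_k-jn_l)|\ll|in_k-jn_l|^{-\eta}$ and is of lower order — one gets $\int U_{2m}^{2}\,d\mu\le(1+o_H(1))C_0\|f\|H$, and moreover $\bigl|\int U_{2m}\,d\mu\bigr|\ll_{q,\eta,\mu}q^{-2\eta Hm}$ for $m\ge1$ and $\ll_{q,\eta,\mu}1$ for $m=0$ (the geometric convergence of $\sum_k n_k^{-\eta}$, guaranteed by \eqref{lacunary}, is what makes even the first block bounded). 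Hence $\sum_{m=0}^{P-1}\bigl|\int U_{2m}\,d\mu\bigr|\ll_{q,\eta,\mu}1$, and multiplying the $P$ single-block bounds gives
\[
\prod_{m=0}^{P-1}\int e^{\kappa U_{2m}}\,d\mu\le\exp\!\Big(\tfrac12(1+o_H(1))C_0\|f\|HP\kappa^{2}+O_{q,\eta,\mu}(\kappa)\Big).
\]

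The crux is the quasi-independence step: to bound $\int\exp(\kappa\sum_{m=0}^{P-1}U_{2m})\,d\mu$ by $(1+o_H(1))\prod_{m}\int e^{\kappa U_{2m}}\,d\mu$. Put $S=\sum_{m=0}^{P-1}U_{2m}$ and truncate the exponential series at degree $L\asymp\kappa HP$ in $S$ (note $\|S\|_\infty\le 3HP$): the remainder is $\ll1$ in sup norm, hence negligible against the right-hand side, which is $\ge1$, while the truncated polynomial has all of its frequencies equal to sums of at most $L$ terms $jn_k$ with $1\le|j|\le T$, $1\le k\le 2HP$. Expanding $\int S^{\ell}\,d\mu=\sum\int\prod_{i=1}^{\ell}U_{2m_i}\,d\mu$ and using that consecutive even blocks are separated by a gap block of length $H$ — so that by \eqref{lacunary} the frequency band of $U_{2m}$ lies below that of $U_{2m+2}$ with ratio at least $q^{H+1}$, which by \eqref{kprel} far exceeds $T$ — one sees that in every cross-block summand the multiset $\{m_i\}$ has a largest value $m^{\ast}\ge1$, and unless the block-$m^{\ast}$ contribution cancels internally the resulting frequency $\nu$ has size $\gg n_{2Hm^{\ast}+1}$, so that $|\widehat\mu(\nu)|\ll|\nu|^{-\eta}$ is exponentially small in $H$; summed over all $\ell\le L$ these ``non-resonant'' summands contribute $\ll 3T^{2}H^{2}q^{-\eta H}=o_H(1)$, which is exactly why \eqref{kprel} demands $3T^{2}H^{2}<q^{\eta H}$. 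The remaining ``resonant'' summands — those with full internal cancellation in each repeated block, together with the few low-frequency cross-cancellations, which are sparse for the same counting reasons as in the Lebesgue case and for which one simply uses $|\widehat\mu|\le1$ — reassemble block by block into $\prod_m\int e^{\kappa U_{2m}}\,d\mu$ up to a multiplicative $o_H(1)$ error.

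Combining the last two displays, $\int\exp(\kappa\sum_{m=0}^{P-1}U_{2m})\,d\mu\le\exp\bigl(\tfrac12(1+o_H(1))C_0\|f\|HP\kappa^{2}+O_{q,\eta,\mu}(\kappa)\bigr)$, and for $H$ large the two error contributions are swallowed by the slack between $(1+o_H(1))P$ and $(1+2\delta)(P+1)$, which is the assertion; the odd-block inequality is identical. The routine part is precisely this bookkeeping, folding the $o_H(1)$ and $O(\kappa)$ terms into the ``$(P+1)$'' and the ``$(1+2\delta)$''. The genuine obstacle is the quasi-independence step: in contrast to the Lebesgue case, where off-block products integrate to zero outright, here every frequency configuration must be classified, and one must check that the only ones not annihilated by the decay of $\widehat\mu$ are the pairing-type resonances already present and already estimated in Philipp's argument, so that the final constant is unchanged and equal to $C_0=\tfrac12+2(q^{1/2}-1)^{-1}$.
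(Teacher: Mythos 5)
Your overall strategy (per-block second-order Taylor expansion, frequency separation between blocks, Fourier decay of $\widehat\mu$ to kill high-frequency cross terms) is the right one, but at the decisive step you take a different and unjustified route. You aim to prove a quasi-independence statement of the form $\int\exp(\kappa\sum_m U_{2m})\,\mathrm{d}\mu\le(1+o_H(1))\prod_m\int e^{\kappa U_{2m}}\,\mathrm{d}\mu$ by expanding all moments $\int S^{\ell}\,\mathrm{d}\mu$ and classifying frequency configurations into resonant and non-resonant. The sentence asserting that the resonant summands ``reassemble block by block into $\prod_m\int e^{\kappa U_{2m}}\,\mathrm{d}\mu$'' is precisely the content of the lemma, not a routine verification: it requires controlling $\int U_{2m}^{\ell}\,\mathrm{d}\mu$ for all $\ell\ge3$ (repeated blocks), i.e.\ counting near-solutions of $j_1n_{k_1}+\cdots+j_\ell n_{k_\ell}\approx0$ with the correct combinatorial weights, and showing that only pairings survive with the exact constant $C_0$. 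No mechanism for this is given, and the hypothesis $3T^2H^2<q^{\eta H}$ in \eqref{kprel} is calibrated for counting the terms of a \emph{single} factor $U_{2m}+W_{2m}$, not the $(2TH)^{\ell}$ configurations of an $\ell$-th power.

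The paper's proof is organized exactly so as to avoid this. It never factors the integral into single-block integrals and never computes $\int U_{2m}^{\ell}\,\mathrm{d}\mu$ for $\ell\ge3$. Instead, after the pointwise bound $e^{\kappa U_{2m}}\le1+\kappa U_{2m}+\tfrac12(1+\delta)\kappa^2U_{2m}^2$, it splits $U_{2m}^2=V_{2m}+W_{2m}$ and invokes the \emph{pointwise} (deterministic, valid for every $x$) bound $|V_{2m}(x)|\le C_0\|f\|H$ of Takahashi--Philipp for the low-frequency part. Thus each block factor becomes ``constant $+$ oscillation of frequency $\ge n_{2mH}$'', the multinomial expansion of the product has a pure-constant main term $(1+\tfrac12(1+\delta)C_0\kappa^2\|f\|H)^{P+1}$, and every other term is a trigonometric sum whose leading frequency dominates (via $4PT\le q^{2H}$) and is annihilated by $|\widehat\mu(t)|\ll|t|^{-\eta}$ after counting at most $3^sT^{2s}H^{2s}$ terms. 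All resonance analysis is thereby confined to the quadratic level, where it is already done in the literature. You should restructure your argument along these lines, or else supply the full moment/resonance computation you are currently asserting. A secondary, fixable point: an additive truncation error that is merely $O(1)$ is not negligible against a right-hand side that may be as small as $\exp(o(1))$; you would need to truncate deep enough that the tail is smaller than $\kappa^2\|f\|HP$.
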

\begin{proof}
We shall employ the inequality 
\begin{equation} \label{expineq}
|e^{z}| \leq 1 + z + \textstyle\frac{1}{2}(1+\delta)z^2 \, ,
\end{equation}
which is valid for all  numbers with $|z|<z_0(\delta).$ Since 
\begin{equation}
|\kappa U_{2m}(x)| = \left| \kappa \sum_{k=2Hm+1}^{H(2m+1)}\hspace{-3mm} g(n_kx)\right| \leq \kappa H\|g\|_{\infty} \stackrel{\eqref{gnorm}, \eqref{kprel}}{<} 1, \end{equation}
we can apply \eqref{expineq} to obtain
\begin{eqnarray*}
\exp\left(\kappa \sum_{m=0}^{P-1}U_{2m}(x) \right)\,  =\, \prod_{m=0}^{P-1}\exp(\kappa U_{2m}(x) )\, \leq\,  \prod_{m=0}^{P-1} \left(1 + \kappa U_{2m}(x) + \textstyle\frac{1}{2}(1+\delta)\kappa^2 U_{2m}^2(x)\right)\, .
\end{eqnarray*}
Observe that 
$$ U_{2m}(x)\, =\, \sum_{k=2mH+1}^{H(2m+1)}g(n_kx) \, = \,  \mathop{\mathop{\sum\sum}_{2mH<k< (2m+1)H}}_{1\leq|j|\leq T}\hspace{-4mm}c_j\, e(jn_kx) $$
is a sum of trigonometric terms of frequencies at least $n_{2mH} \geq q^{2mH}$ in absolute value. Write 
\begin{eqnarray*}
U_{2m}^2(x) &=&  \mathop{\mathop{\sum\sum}_{2mH<k< (2m+1)H}}_{1\leq|j|\leq T}\hspace{-4mm} c_j^2\, e(2jn_kx) \, + \, \mathop{\mathop{\sum\sum}_{2mH<k\leq (2m+1)H}}_{1\leq |j_1|,|j_2|\leq T}\hspace{-3mm}c_{j_1}\, c_{j_2} e \left((j_1+j_2)n_k x \right) \\ 
& & + 2  \mathop{\mathop{\sum\sum}_{2mH<k<l\leq (2m+1)H}}_{1\leq|j_1|,|j_2|\leq T}\hspace{-3mm}c_{j_1}c_{j_2}\, e\left((j_1n_k+j_2n_l) x \right) \\[2ex]
& = & W_{2m}(x) + V_{2m}(x) \, , 
\end{eqnarray*}
where 
\begin{eqnarray*}
W_{2m}(x) &: = &  \mathop{\mathop{\sum\sum}_{2mH<k< (2m+1)H}}_{1\leq|j|\leq T} c_j^2\, e(2jn_kx) \, + \,   \mathop{\mathop{\sum\sum}_{2mH<k\leq (2m+1)H}}_{1\leq |j_1|,|j_2|\leq T}\hspace{-3mm}c_{j_1}\, c_{j_2} e \left((j_1+j_2)n_k x \right) \\
& & + 2  \mathop{\mathop{\mathop{\sum\sum}_{2mH<k<l\leq (2m+1)H}}_{1\leq|j_1|,|j_2|\leq T}}_{|j_1n_k + j_2n_l|\geq n_{2mH}}\hspace{-3mm}c_{j_1}c_{j_2}\, e\left((j_1n_k+j_2n_l) x \right)
\end{eqnarray*}
\noindent is the sum of trigonometric terms appearing in $U_{2m}^2(x)$ with frequencies at least $n_{2mH}$, and
\begin{equation*}
V_{2m}(x) = U_{2m}^{2}(x) - W_{2m}(x) 
\end{equation*}
is the sum of the remaining terms in $U_{2m}^2(x)$, which have frequencies strictly less than $n_{2mH}$. It is shown in \cite{takahashi} and \cite[p.246]{philipp} that
\begin{equation*}
|V_{2m}(x)| \leq C_0 \, \|f \| H \, .
\end{equation*}
Hence 
\begin{equation*}
\int\exp\left(\kappa \sum_{m=0}^{P-1}U_{2m}(x) \right)\mathrm{d}\mu(x) \leq \int h(x) \mathrm{d}\mu(x),
\end{equation*} where we define the integrand to be 
\begin{eqnarray*}
h(x) &:=& \prod_{m=0}^{P-1}\left(1 + \textstyle\frac{1}{2}(1+\delta)C_0\kappa^2\|f\|H + \kappa U_{2m}(x) +\textstyle\frac{1}{2}(1+\delta) \kappa^2W_{2m}(x) \right) \\[2ex]
& = & \left( 1+ \textstyle\frac{1}{2}(1+\delta)C_0  \kappa^2\|f\|H\right)^P + \\
& & + \, \left(1+ \textstyle\frac{1}{2}(1+\delta)C_0  \kappa^2\|f\|H\right)^{P-1}\left(\kappa\sum_{m=0}^{P-1}U_{2m}(x) +  \textstyle\frac{1}{2}(1+\delta)\displaystyle\kappa^2\sum\limits_{m=0}^{P-1}W_{2m}(x) \right)\\
& & +\, \left(1+ \textstyle\frac{1}{2}(1+\delta)C_0  \kappa^2\|f\|H\right)^{P-2}\hspace{-4mm}\mathop{\mathop{\sum}_{0\leq m_1< m_2 < P}}\displaystyle\prod_{i=1}^{2}(\kappa U_{2m_i}(x) + \textstyle\frac{1}{2}(1+\delta)\kappa^2 W_{2m_i}(x))  \\
& & + \ldots \\
& & +\, \left(1 + \textstyle\frac{1}{2}(1+\delta)C_0  \kappa^2\|f\| H\right)^{P-s}\hspace{-6mm}\mathop{\sum}_{0\leq m_1<\ldots< m_s < P}\displaystyle\prod_{i=1}^{s}(\kappa U_{2m_i}(x) +\textstyle\frac{1}{2}(1+\delta)\kappa^2 W_{2m_i}(x))  \\
& & + \, \ldots  + \, \prod_{m=0}^{P-1}(\kappa U_{2m}(x) + \textstyle\frac{1}{2}(1+\delta)\kappa^2 W_{2m}(x) )\, .
\end{eqnarray*}
If we look at the $s$-th term in the above expansion, every factor 
$$ \kappa U_{2m_i}(x) + \textstyle\frac{1}{2}(1+\delta)\displaystyle\kappa^2 W_{2m_i}(x) $$
is a sum of trigonometric terms, which have frequencies lying between $n_{2m_iH}$ and $2Tn_{(2m_i+1)H}$ in absolute value. The number of these terms is at most $3T^2H^2$. Thus any product 
$$ \prod_{j=1}^{s}\left( \kappa U_{2m_j}(x) + \textstyle\frac{1}{2}(1+\delta) \displaystyle\kappa^2 W_{2m_j}(x) \right) $$
is a sum of at most $3^sT^{2s}H^{2s}$ trigonometric terms, each of them being multiplied by a coefficient at most $\kappa^s$ and having frequency which is at least 
\begin{eqnarray*}
 n_{2m_sH} - 2T(n_{2m_1H} + \ldots + n_{2m_{s-1}H}) &=& n_{2m_sH}\left[ 1 -2T\left( \frac{n_{2m_1H}}{n_{2m_sH}} + \ldots + \frac{n_{2m_{s-1}H}}{n_{2m_sH}}\right)\right] \\[1ex]
 & \geq & q^{2m_sH} \left[1 - 2T\left(\frac{1}{q^{2(m_s-m_1)H}}+ \ldots + \frac{1}{q^{2(m_s-m_{s-1})H}}  \right) \right] \\[1ex]
 & \geq & \frac{1}{2}q^{2m_sH}, 
\end{eqnarray*}
where we used the fact that
\begin{equation*}
\frac{1}{q^{2(m_s-m_1)H}}+ \ldots + \frac{1}{q^{2(m_s-m_{s-1})H}} \, \leq \, \frac{P}{q^{2H}} \, \stackrel{\eqref{kprel}}\leq \, \frac{1}{4T} \, \cdot
\end{equation*} 
We deduce that  
\begin{equation*}
\int  \prod_{j=1}^{s}\left( \kappa U_{2m_j}(x) + \kappa^2 W_{2m_j}(x) \right)  \mathrm{d}\mu(x)\, \leq \, 2^{\eta}\frac{(2\kappa T^2 H^2)^s }{q^{2m_s\eta H} }\, \stackrel{\eqref{kprel}}{\leq}\, \frac{2^{\eta}}{q^{m_s\eta H}}
\end{equation*}
and 
\begin{equation*}
\int\!\mathop{\sum }_{0\leq m_1<\ldots< m_s < P }\prod_{j=1}^{s}\left( \kappa U_{2m_j}(x) + \kappa^2 W_{2m_j}(x) \right) \mathrm{d}\mu(x)\, \leq\, 2^{\eta}\hspace{-4mm}\mathop{\sum}_{0\leq m_1<\ldots< m_s<P}\hspace{-3mm}q^{-m_s\eta H} \leq \frac{2^{\eta}q^{\eta}}{q^{\eta}-1} \, \cdot
\end{equation*}
Thus
\begin{eqnarray*}
\int h(x) \mathrm{d}\mu(x) & \leq & \frac{2^{\eta}q^{\eta}}{q^{\eta}-1}\sum_{s=0}^{P}\left( 1+\textstyle\frac{1}{2}(1+\delta) C_0\kappa^2\|f\|H\right)^{P-s}  \\[1ex]
& \leq &  \frac{2^{\eta}q^{\eta}}{q^{\eta}-1}\left( 1+ \textstyle\frac{1}{2}(1+\delta)C_0\kappa^2\|f\|H\right)^{P+1} \\[2ex]
& \leq & \frac{2^{\eta}q^{\eta}}{q^{\eta}-1} \exp(\textstyle\frac{1}{2}(1+\delta)C_0\kappa^2\|f\| H(P+1) )  \\[2ex]
& \leq & \exp(\textstyle\frac{1}{2}(1+2\delta)C_0\kappa^2\|f\| H(P+1) ).
\end{eqnarray*}
%The last inequality is true provided $H$ is large enough, as in the hypothesis.
The second inequality of the lemma follows in precisely the same way.
\end{proof}

\begin{prop}\label{prop} Let $M_0\geq 0,\, M\geq 1$ be positive integers and let $R\geq 1$ be a real number. Assume $f$ satisfies \eqref{fassumptions} and $\|f\| \geq M^{-3/5}$. Consider the set 
\begin{equation}
A = \left\{ x\in [0,1) \, : \, \left|\sum_{k=M_0+1}^{M_0+M}\!\!\!f(n_kx) \right| \geq (1+4\delta)C_0R\|f\|^{1/4}(M\log\log M)^{1/2} \right\} ,
\end{equation}
where $C_0>0$ is the constant from Lemma \ref{lemma3}. Then 
\begin{equation}
\mu(A) \ll \, \exp\left(-(1+2\delta)C_0\|f\|^{-1/2}R\log\log M\right)\, +\, \frac{1}{\|f\|^{1/2} R^2M^{4}} \, \cdot
\end{equation}
\end{prop}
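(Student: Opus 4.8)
The plan is to follow Philipp's classical exponential-moment scheme, now executed against $\mu$ by means of the $L^2(\mu)$-estimates of the previous lemmas. Fix a small $\delta>0$ and abbreviate $\lambda:=(1+4\delta)C_0R\|f\|^{1/4}(M\log\log M)^{1/2}$; throughout, $\sum_k$ runs over $k=M_0+1,\dots,M_0+M$. Split $f=g+\phi_T$, where $g=f_T$ is the truncation of $f$ at a level $T$ (a parameter, to be a suitable power of $M$) and $\phi_T=f-f_T$ is the tail, and bound $\mu(A)$ by $\mu(|\sum_k\phi_T(n_kx)|\ge\delta\lambda)+\mu(|\sum_k g(n_kx)|\ge(1-\delta)\lambda)$.

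For the tail, observe that $(n_{M_0+k})_{k\ge1}$ is again lacunary with the same ratio $q$, so Lemma \ref{lem2} applies to it and gives $\|\sum_k\phi_T(n_kx)\|_{L_2(\mu)}^2\ll M/T^{\eta}$. Chebyshev's inequality then yields
\[
\mu\Big(\Big|\sum_k\phi_T(n_kx)\Big|\ge\delta\lambda\Big)\ \ll\ \frac{M}{\delta^2\lambda^2\,T^{\eta}}\ =\ \frac{1}{\delta^2(1+4\delta)^2C_0^2\,R^2\|f\|^{1/2}\,T^{\eta}\log\log M},
\]
and choosing $T$ with $T^{\eta}\asymp M^4$ (rounded up; here a polynomial rate $\eta>0$ is all that is used) makes this $\ll \|f\|^{-1/2}R^{-2}M^{-4}$, the second term of the conclusion. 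One must check that this $T$ is compatible, in the sense of \eqref{kprel}, with the block length $H$ chosen below; since $q^{H}$ grows faster than every fixed power of $M$, this holds for all large $M$.

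For the main term, partition $\{M_0+1,\dots,M_0+M\}$ into $\approx M/H$ consecutive blocks of length $H$ and, relabelling by $(n_{M_0+k})_{k\ge1}$, write $\sum_k g(n_kx)=\sum_{m\ge0}U_{2m}+\sum_{m\ge1}U_{2m-1}$ in the notation preceding Lemma \ref{lemma3}; the last, possibly incomplete block and the parity of $M/H$ contribute only $O(H)=o(\lambda)$, absorbed into $\delta$. On $\{|\sum_k g(n_kx)|\ge(1-\delta)\lambda\}$ at least one of the two sums exceeds $s:=(1-\delta)\lambda/2$ in modulus, and since $-f$ also satisfies \eqref{fassumptions} with the same $\|f\|$, it suffices to bound $\mu(\sum_m U_{2m}\ge s)$. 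Exponential Chebyshev together with Lemma \ref{lemma3} gives, for any $0<\kappa\le H^{-10/7}$ with $H,P,T$ arranged as in \eqref{kprel},
\[
\mu\Big(\sum_m U_{2m}\ge s\Big)\ \le\ \exp\!\Big(-\kappa s+\tfrac12(1+2\delta)C_0\kappa^2\|f\|\,H(P+1)\Big),
\]
where $H(P+1)=\tfrac12 M+H\le\tfrac12(1+\delta)M$ for large $M$.

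It remains to choose the parameters so that this right-hand side is $\le\exp(-(1+2\delta)C_0\|f\|^{-1/2}R\log\log M)$, and this calibration is the technical heart of the argument. I would take $H$ to be a power of $M$ (depending on $M$ and $\|f\|$), of order $\big(\|f\|^{3/4}M^{1/2}/(\log\log M)^{1/2}\big)^{7/10}$, and then $\kappa=H^{-10/7}$. For this choice $\kappa s\asymp C_0R\|f\|^{-1/2}\log\log M$ while the quadratic correction $\tfrac12(1+2\delta)C_0\kappa^2\|f\|H(P+1)\asymp C_0\|f\|^{-1/2}\log\log M$, so the linear term dominates precisely by virtue of the hypothesis $R\ge1$, and a short computation (tuning the implied constants and shrinking $\delta$) leaves $\kappa s$ minus the correction at least $(1+2\delta)C_0\|f\|^{-1/2}R\log\log M$. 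The hypothesis $\|f\|\ge M^{-3/5}$ is exactly what makes this $H\ge1$ and ensures the three constraints of \eqref{kprel} linking $\kappa,H,P,T$ can be met simultaneously. Combining the two parts,
\[
\mu(A)\le\mu\Big(\Big|\sum_k\phi_T(n_kx)\Big|\ge\delta\lambda\Big)+2\,\mu\Big(\Big|\sum_k g(n_kx)\Big|\ge(1-\delta)\lambda\Big),
\]
which by the two estimates above is $\ll\exp(-(1+2\delta)C_0\|f\|^{-1/2}R\log\log M)+\|f\|^{-1/2}R^{-2}M^{-4}$ after renaming $\delta$. The main obstacle is this joint tuning of $T$, $H$, $P$ and $\kappa$: it must simultaneously force \eqref{kprel}, deliver the $M^{-4}$-saving in the tail estimate, and leave a surplus in the exponential estimate with leading constant $(1+2\delta)C_0$ — the seemingly ad hoc exponents $\tfrac35$ and $\tfrac{10}{7}$ in the hypotheses are chosen so that all of these can hold at once.
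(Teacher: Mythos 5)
Your overall architecture is exactly the paper's: write $f=g+\phi_T$ with $g=f_T$, kill the tail by the $L^2(\mu)$-bound of Lemma \ref{lem2} via Chebyshev with $T^{\eta}\asymp M^4$ (the paper takes $T=M^{\lceil 4/\eta\rceil}$), and treat the truncated sum by exponential Chebyshev through Lemma \ref{lemma3} after splitting into even and odd blocks, with $\kappa=\big(\|f\|^{-3/2}M^{-1}\log\log M\big)^{1/2}$ --- which is precisely the $\kappa$ you arrive at by setting $\kappa=H^{-10/7}$ with your $H$. The cosmetic differences (the paper fixes $H=\lfloor M^{1/30}\rfloor$ and merely checks $\kappa\leq H^{-10/7}$ from $\|f\|\geq M^{-3/5}$, and it treats the full sum $\sum_{m\le 2P}U_m$ at once rather than union-bounding over parities) are harmless.

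There is, however, a genuine quantitative gap in your final calibration, and it is not fixable by ``tuning implied constants and shrinking $\delta$'', because by that stage every quantity is pinned down. With $s=(1-\delta)\lambda/2$ and $\lambda=(1+4\delta)C_0R\|f\|^{1/4}(M\log\log M)^{1/2}$ you get $\kappa s\approx\tfrac12 R\,C_0\|f\|^{-1/2}\log\log M$, while the quadratic correction $\tfrac12(1+2\delta)C_0\kappa^2\|f\|H(P+1)$ with $H(P+1)\approx M/2$ equals $\approx\tfrac14 C_0\|f\|^{-1/2}\log\log M$. Your exponent is therefore about $-\big(\tfrac{R}{2}-\tfrac14\big)C_0\|f\|^{-1/2}\log\log M$, which is strictly weaker than the claimed $-(1+2\delta)C_0\|f\|^{-1/2}R\log\log M$ for every $R\geq1$; even optimizing the free multiplicative constant in $\kappa$ only yields $-\tfrac{R^2}{4}C_0\|f\|^{-1/2}\log\log M$, which still falls short for $1\leq R<4$ --- and $R=1$ is exactly the case used in the subsequent application. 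The paper sidesteps this by applying the exponential Chebyshev at the threshold $(1+2\delta)Q$ with $Q=3C_0R\|f\|^{1/4}(M\log\log M)^{1/2}$, i.e.\ with an extra factor of $3$ built in, so that $\kappa Q=3RC_0\|f\|^{-1/2}\log\log M$ and the correction is absorbed using only $3R-1\geq R$. (To be fair, this factor of $3$ creates a mismatch with the threshold in the definition of $A$ in the Proposition as stated, so the bookkeeping is delicate in the source as well; but as written, your split of the threshold cannot deliver the stated constant in the exponent, and you should either import the factor $3$ into your choice of $s$ or weaken the constant in the conclusion accordingly.)
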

\begin{proof}
Without loss of generality, we may assume that $M_0=0$. We put $H=[M^{1/30}], \, T=M^{\lceil 4/ \eta \rceil }$ and set
\begin{equation}
Q= 3C_0\|f\|^{1/4}R(M\log\log M)^{1/2}, \hspace{6mm} \kappa = \left(\|f\|^{-3/2}M^{-1}\log\log M \right)^{1/2}\, .
\end{equation}
We choose a positive integer $P$ such that
\begin{equation} \label{P}
H(2P+1) \, \leq \, M \, \leq \, H(2P+3)\, .
\end{equation}
Observe that $A\subseteq A_1 \cup A_2$, where 
\begin{equation*}\label{twosets}
A_1=\left\{x\in[0,1) : \sum_{k\leq M}g(n_kx) \geq (1+2\delta)Q  \right\},\,  A_2 = \left\{x\in[0,1) : \sum_{k\leq M}\phi_T(n_k x)\geq 2\delta Q \right\} .
\end{equation*}
We are going to give estimates for the measure of these sets using the Chebyshev--Markov inequality. In order to do that, we observe that
\begin{equation}\label{difference}
\kappa\left| \sum_{k\leq M}g(n_kx)-\sum_{m=0}^{2P}U_m(x)\right|\,\leq\,\kappa\hspace{-4mm}\sum_{k=H(2P+1)}^{M}\hspace{-3mm}|g(n_kx)|\,\stackrel{\eqref{gnorm},\eqref{P}}{\leq}\, 6\kappa H = o(1),\hspace{4mm} M\to\infty .
\end{equation}
Also since $\eqref{kprel}$ is satisfied, we can apply Lemma \ref{lemma3}. Regarding $A_1$, we estimate 
\begin{eqnarray*}
\mu(A_1) & \leq & e^{-(1+2\delta)\kappa Q}\int \exp\left(\kappa\sum_{k\leq M}g(n_kx)\right)\mathrm{d}\mu(x) \\[1ex]
& \ll & e^{-(1+2\delta)\kappa Q}\int\exp\left( \kappa\sum_{m=0}^{2P}U_m(x) \right)\mathrm{d}\mu(x) \hspace{4mm} \text{ (by \eqref{difference}) } \\[1ex]
& \leq & e^{-(1+2\delta)\kappa Q} \exp\left(2(1+2\delta)C_0\kappa^2\|f\|H(P+1)\right)\hspace{4mm}\text{(by Lemma \ref{lemma3})} \\[1ex]
& \leq & \exp\left(-(1+2\delta)C_0\|f\|^{-1/2}R\log\log M\right)\, ,
\end{eqnarray*}
while for $A_2$, the Chebyshev--Markov inequality again gives
\begin{eqnarray*}
\mu(A_2) &\ll & \frac{1}{Q^2}\int\left(\sum_{k\leq M}\phi_T(n_kx)\right)^2\mathrm{d}\mu(x) \\[1ex]
& \ll & \frac{1}{Q^2} M T^{-\eta} \hspace{8mm} \text{(by Lemma \ref{lem2})} \\[1ex]
& \ll & \frac{1}{\|f\|^{1/2} R^2M^{4}} \, \cdot 
%& \ll & \frac{1}{R^2 N^{5/8}} \hspace{5mm} \text{(since $\eta<1/2 $)} \cdot
\end{eqnarray*}
\end{proof}

\subsubsection{Proof of the upper bound}

\noindent Let $N\geq 1$ be a positive integer sufficiently large. We set
\begin{equation} \label{h1value}
H_1 \, = \, \left\lfloor \frac{\log N}{\log 4}\right\rfloor + 1\, .
\end{equation}
We define the functions $(\phi_{h}^{(j)})_{h\leq H_1}^{j\leq 2^h}$ as in \cite{philipp}. Under this notation, inequality $(3.2)$ in \cite{philipp} states that for each $0\leq\alpha<1$ there exists some index $j=j(\alpha)\leq 2^h$ such that
\begin{equation} \label{frels}
\sum_{h=1}^{H_1-1}\phi_{h}^{(j)}(x) \, \leq \, \chi_{[0,\alpha)}(x) \, \leq \, \sum_{h=1}^{H_1}\phi_{h}^{(j)}(x) .
\end{equation}

\noindent For $1\leq h \leq H_1,\, 1\leq j\leq 2^h, N\geq 1, M\geq 0$ set
$$ F(M,N,j,h;x) = \left| \sum_{k=M+1}^{M+N} \left( \phi_{h}^{(j)}(n_kx) - \int\phi_{h}^{(j)}(t)\mathrm{d}t \right)  \right|   $$

\noindent The following is a variation of Lemma $4$ in \cite{philipp}. The proof relies on a method of Gal $\&$ Gal, see \cite[Lemma $3.10$]{galgal}.
\begin{lem} Let $n$ be the positive integer such that $2^n \leq N < 2^{n+1}.$ There exist integers $(m_l)_{l=1}^{n}$ such that $0\leq m_l < 2^{n-l}, \, 1\leq l \leq n$ and
$$ F(0,N,j,h;x) \leq F(0,2^n,j,h;x) + \sum_{l=\lceil 5n/12\rceil}^{n}F(2^n+m_l2^l, 2^{l-1},j,h;x) + N^{5/12} .$$
\end{lem}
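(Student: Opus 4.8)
The statement is a "chaining over dyadic blocks" reduction: we want to bound the maximal partial sum $F(0,N,j,h;x)$ over $N$ by its value at the largest dyadic endpoint $2^n$ plus a sum of contributions from geometrically shrinking blocks, with an acceptable error term $N^{5/12}$. The natural approach is a binary (dyadic) decomposition of the interval $\{1,\dots,N\}$: write $N = 2^n + \sum_{l} \epsilon_l 2^l$ in binary, so that $\{2^n+1,\dots,N\}$ splits into at most one dyadic interval of each length $2^{l-1}$ for the indices $l$ with $\epsilon_l = 1$, each of the form $\{2^n + m_l 2^l + 1,\dots,2^n+m_l2^l+2^{l-1}\}$ for suitable $0\le m_l<2^{n-l}$. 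Since $F(M,N',j,h;x)$ is a sum over the block $\{M+1,\dots,M+N'\}$ of the centered function $\phi_h^{(j)}(n_kx)-\int\phi_h^{(j)}$, the triangle inequality over the at-most-$n$ constituent blocks (those of length $2^{l-1}$ for $l=1,\dots,n$) gives $F(0,N,j,h;x)\le F(0,2^n,j,h;x)+\sum_{l=1}^{n}F(2^n+m_l2^l,2^{l-1},j,h;x)$ — note each summand is included whether or not the binary digit is present, since $F\ge 0$; the cruder full sum only helps.

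**Truncating the low-order terms.**
The point of the cutoff $l\ge\lceil 5n/12\rceil$ is that the blocks of length $2^{l-1}$ with $l$ small contribute a negligible polynomial error. I would estimate, for each such small $l$, the trivial bound $F(2^n+m_l2^l,2^{l-1},j,h;x)\le 2^{l-1}\cdot(\|\phi_h^{(j)}\|_\infty + \int|\phi_h^{(j)}|)$, or rather the sharper bound coming from the structure of the $\phi_h^{(j)}$ used in Philipp \cite{philipp} (these are essentially functions whose Fourier support and sup-norm are controlled so that $\|\phi_h^{(j)}\|_\infty\ll 1$). Summing over $l\le\lceil 5n/12\rceil$, the geometric sum is dominated by its top term $\ll 2^{5n/12}\ll N^{5/12}$, which is exactly the claimed error. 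One must double-check the constants so that the total of the discarded blocks is genuinely $\le N^{5/12}$ and not merely $O(N^{5/12})$; this is a matter of absorbing the $O(1)$ factor into the slack between $5/12$ and the exponent one actually gets from $2^{\lceil 5n/12\rceil}/(2-1)$, using that $N<2^{n+1}$ — here one wants $2^{5n/12}\cdot(\text{const})\le 2^{(5/12)(n+1)}\le N^{5/12}$ after adjusting, or simply invoking "$N$ sufficiently large."

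**Main obstacle.**
The only genuinely delicate point is verifying that the error term is literally $N^{5/12}$ with coefficient one (rather than an implied constant), which forces a careful accounting of the sup-norms of the building blocks $\phi_h^{(j)}$ and of the geometric-series constant; everything else is the bookkeeping of binary expansions and the triangle inequality. This is precisely the place where one leans on the apparatus already set up in \cite{philipp, galgal}: the functions $\phi_h^{(j)}$ are constructed there with norm bounds making the discarded tail summable to a clean power of $N$. So the proof I would write is: (1) expand $N$ in binary relative to $2^n$; (2) apply the triangle inequality over the resulting dyadic blocks to get the sum over all $l=1,\dots,n$; (3) split that sum at $l=\lceil 5n/12\rceil$, keeping the top part verbatim and bounding the bottom part by a geometric sum $\le N^{5/12}$ using the $O(1)$ sup-norm of $\phi_h^{(j)}$ and $N$ large. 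I expect step (3)'s constant-tracking to be the fiddly part, and I would handle it exactly as in \cite[Lemma 3.10]{galgal}.
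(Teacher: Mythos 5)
Your proof is correct and is essentially the argument the paper has in mind (the paper gives no details, deferring to the Gal--Gal dyadic decomposition of \cite[Lemma 3.10]{galgal}, which is exactly your step (1)--(2)). The constant-tracking worry in your step (3) in fact dissolves exactly: since the $\phi_h^{(j)}$ are indicators of intervals, $\|\phi_h^{(j)}-\int\phi_h^{(j)}\|_\infty\le 1$, so the discarded blocks contribute at most $\sum_{l<\lceil 5n/12\rceil}2^{l-1}=2^{\lceil 5n/12\rceil-1}-1\le 2^{5n/12}\le N^{5/12}$ with coefficient one and no largeness assumption on $N$.
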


\noindent In what follows we set
\begin{equation*}
\chi(N) = 2(1+4\delta)C_0(N\log\log N)^{1/2} \, .
\end{equation*}

\noindent Define the sets 
\begin{eqnarray*}
G(n,j,h) &=& \{ 0\leq x < 1 : F(0,2^n,j,h;x)\geq 2^{-h/8}\chi(2^n)\},\\[2ex]
H(n,j,h,l,m) &=& \{ 0\leq x <1 : F(2^n+m2^l, 2^{l-1}, j, h;x) \geq 2^{-h/8}2^{(l-n-3)/6}\chi(2^n)\}, \\
G_n &=& \bigcup_{h\leq H}\bigcup_{j\leq 2^h}G(n,j,h), \hspace{4mm} H_n =  \bigcup_{h\leq H}\bigcup_{j\leq 2^h}\bigcup_{l=\lceil\frac{5n}{12}\rceil}^{n}\bigcup_{m\leq 2^{n-l}} H(n,j,h,l,m) \, .
\end{eqnarray*}

\begin{lem}Let $0<\delta_0<1$. There exists $n_0=n_0(\delta_0)\in\mathbb{N}$ such that 
$$ \mu\left(\bigcup_{n=n_0}^{\infty}(G_n\cup H_n) \right) < \delta_0 .  $$
\end{lem}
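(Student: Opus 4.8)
The plan is to bound the measure of $\bigcup_{n\ge n_0}(G_n\cup H_n)$ by a convergent tail of a series, using Proposition~\ref{prop} applied to each $\phi_h^{(j)}$ together with a union bound over the (polynomially many in $2^n$) choices of $h,j,l,m$. First I would record that each $\phi_h^{(j)}$ is, after normalisation, an even function of bounded variation satisfying \eqref{fassumptions}, and that $\|\phi_h^{(j)}\|\asymp 2^{-h}$ (this is the content of the construction in \cite{philipp}); in particular the hypothesis $\|f\|\ge M^{-3/5}$ of Proposition~\ref{prop} holds for the relevant ranges of $h$, since $h\le H\le H_1\ll\log N$ while $M$ is a power of $2$ comparable to $N$ or to $2^l$ with $l\ge 5n/12$, so $2^{-h}$ is much larger than $M^{-3/5}$.

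Next I would apply Proposition~\ref{prop} to estimate $\mu(G(n,j,h))$: here $M=2^n$, $M_0=0$, and the threshold $2^{-h/8}\chi(2^n)=2\cdot2^{-h/8}(1+4\delta)C_0(2^n\log\log 2^n)^{1/2}$ matches the shape $(1+4\delta)C_0 R\|f\|^{1/4}(M\log\log M)^{1/2}$ with $\|f\|^{1/4}\asymp 2^{-h/4}$, forcing $R\asymp 2^{-h/8}/2^{-h/4}=2^{h/8}$ (up to the absolute constant coming from $\|f\|\asymp 2^{-h}$). Then Proposition~\ref{prop} gives
\begin{equation*}
\mu(G(n,j,h))\ll \exp\!\big(-(1+2\delta)C_0\|f\|^{-1/2}R\log\log 2^n\big)+\frac{1}{\|f\|^{1/2}R^2 2^{4n}},
\end{equation*}
and since $\|f\|^{-1/2}R\asymp 2^{h/2}\cdot2^{h/8}=2^{5h/8}\ge 1$, the first term is $\ll (\log 2^n)^{-(1+2\delta)C_0}=\ll n^{-(1+2\delta)C_0}$ (note $C_0>1$ so this alone is summable in $n$ once multiplied by the number of pairs $(j,h)$, which is $\ll 2^{H}\le 2^{H_1}\ll N\ll 2^n$ — hmm, this is the delicate point, see below), while the second term is $\ll 2^{-4n}$, and summing over $j\le 2^h$, $h\le H$ contributes an extra factor $\ll 2^H\ll 2^n$, still leaving $\ll 2^{-3n}$. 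I would treat $\mu(H(n,j,h,l,m))$ the same way with $M=2^{l-1}$, $M_0=2^n+m2^l$, and threshold involving the extra decay $2^{(l-n-3)/6}$; the exponent bookkeeping shows the number of quadruples $(j,h,l,m)$ is $\ll 2^H\cdot n\cdot 2^{n-\lceil 5n/12\rceil}\ll 2^n\cdot n$, and the per-term bound is small enough (thanks to the $2^{(l-n)/6}$ gain and $l\ge 5n/12$) that $\mu(H_n)$ is also summable.

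The main obstacle is the union bound over $j\le 2^h$: the first (exponential) term in Proposition~\ref{prop} decays only like $n^{-(1+2\delta)C_0}$, which is \emph{not} summable against the factor $2^h$ that can be as large as $2^{H_1}\asymp N\asymp 2^n$. This is precisely why one must exploit that $R$ grows with $h$: the true bound is $\exp(-(1+2\delta)C_0\,2^{5h/8}\log\log 2^n)=n^{-(1+2\delta)C_0 2^{5h/8}}$, so after multiplying by the $2^h$ choices of $j$ and summing over $h\ge 1$ the dominant contribution is $h=1$, giving $\ll n^{-(1+2\delta)C_0\cdot 2^{5/8}}$ per $n$, and since $2^{5/8}>1$ and $C_0>1$ this is summable in $n$ (and the cutoff $h\le H$ only helps). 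So the real work is to verify that the $\|f\|$-dependence propagates correctly through Proposition~\ref{prop} to produce a genuinely $h$-dependent, geometrically growing exponent, after which choosing $n_0=n_0(\delta_0)$ large enough that the convergent tail $\sum_{n\ge n_0}(\mu(G_n)+\mu(H_n))<\delta_0$ is immediate. I would also double-check that the side conditions \eqref{kprel} of Lemma~\ref{lemma3} (hence the hypotheses of Proposition~\ref{prop}) are met for $H=[M^{1/30}]$, $T=M^{\lceil4/\eta\rceil}$ with $M$ equal to $2^n$ or $2^{l-1}$ — this is routine since $H$ and $T$ are fixed powers of $M$ and the constraints are polynomial in $M$ versus exponential ($q^{2H}$, $q^{\eta H}$) in $M^{1/30}$.
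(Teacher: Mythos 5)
Your proposal follows essentially the same route as the paper: apply Proposition \ref{prop} to each $G(n,j,h)$ (with $M=2^n$, $M_0=0$) and each $H(n,j,h,l,m)$ (with $M=2^{l-1}$, $M_0=2^n+m2^l$, $R=2^{(n-l)/3}$), exploit the geometric growth in $h$ of the exponent $\|f\|^{-1/2}R$ to beat the $2^h$ choices of $j$ in the union bound, and sum the resulting $\ll n^{-(1+\delta)}$ bounds over $n\ge n_0$. The only bookkeeping difference is the normalisation: the paper records $\|\phi_h^{(j)}-\int\phi_h^{(j)}\|^2\asymp 2^{-h}$, so $\|f\|^{1/4}\asymp 2^{-h/8}$ already matches the threshold and one takes $R=1$ for $G(n,j,h)$, yielding exponent $2^{h/4}$ in place of your $2^{5h/8}$ — which changes nothing structurally.
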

\begin{proof}
By \eqref{h1value} we have 
\begin{equation}
 N^{-1/2 }  \ll 2^{-(h+1)} \leq \left\| \phi_h^{(j)} - \int \phi_h^{(j)}(t)\mathrm{d}t \right\|^2 \leq 2^{-h},\hspace{5mm} 1\leq h \leq H_1,\, 1\leq j\leq 2^h.
\end{equation}
Applying Proposition \ref{prop} with  $M_0=0, M=2^n$, $R=1$ and $f=\phi_h^{(j)} - \int \phi_h^{(j)}(t)\mathrm{d}t $, we get
\begin{eqnarray*}
\mu\left( G(n,j,h)\right) & \ll & \exp\left(-(1+2\delta)C_0\|f\|^{-1/2}\log\log N\right) + \frac{1}{\|f\|^{1/2} N^{4}} \\
& \ll & \exp\left( -(1+2\delta)C_0 2^{h/4}\log\log 2^n \right) + \frac{2^{h/4}}{2^{4n}}, 
%& \ll & \frac{1}{2^{5n/8}} \, ,
\end{eqnarray*}
hence
\begin{eqnarray*}
\mu(G_n)\, &\leq\,& \sum_{h\leq H_1}\sum_{j\leq 2^h} \mu\left( G(n,j,h)\right) \, \ll\,  \sum_{h\leq H_1}\sum_{j\leq 2^h}\left( (n\log 2)^{-(1+2\delta)C_0 2^{h/4} } + \frac{2^{h/4}}{2^{4n}} \right)\\  
&\ll & \sum_{h\leq H_1} \left( n^{-(1+2\delta)C_0 2^{h/4} } + \frac{2^{5h/4}}{2^{4n}} \right)  \, \ll \, n^{-(1+\delta)} \, .
% \frac{2^{H_1}}{2^{ 5n/8}} \,=\, \frac{N^{1/2}}{2^{5n/8}}\, \ll \,2^{-n/8} \, .
\end{eqnarray*}
Applying Proposition \ref{prop} with $M_0=2^n + m2^l, \,  M=2^{l-1}, R=2^{(n-l)/3}$ and $f=\phi_h^{(j)} - \int \phi_h^{(j)}(t)\mathrm{d}t $ we obtain  
\begin{eqnarray*}
\mu\left( H(n,j,h,l,m)\right) & = & \mu\left( \{x: F(2^n+m2^l,2^{l-1},j,h) \geq 2^{-h/8} 2^{(l-n-3)/6}\chi(2^n)  \} \right) \\[1ex]
& \leq & \mu\left(\{x: F(2^n+m2^l,2^{l-1},j,h) \geq \|f\|^{1/4}\, R\, \chi(2^{l})  \} \right)  \\[1ex]
& \ll & \exp(-(1+2\delta)C_0 2^{h/4}2^{(n-l)/3}\log\log 2^l) \, + \, \frac{2^{h/4}}{2^{2n/3}2^{10l/3}} \, . 
\end{eqnarray*}
We now deduce that
\begin{eqnarray*}
\mu(H_n) & \ll & \sum_{h\leq H_1}\sum_{j\leq 2^h}\sum_{l=\lceil\frac{5n}{12}\rceil}^{n}\sum_{m=1}^{2^{n-l}}\mu(H(n,j,h,l,m))\\
& \ll & \sum_{h\leq H_1}\sum_{j\leq 2^h}\sum_{l=\lceil\frac{5n}{12}\rceil}^{n}  \left( \exp(-(1+2\delta)C_02^{h/4}2^{(n-l)/3}\log\log l  ) \, + \, \frac{2^{h/4}2^{n/3}}{2^{13l/3}} \right) \\
& \ll & \sum_{h\leq H_1}\sum_{j\leq 2^h}\sum_{l=\lceil\frac{5n}{12}\rceil}^{n} \left(n^{-(1+2\delta)C_0 2^{h/4}2^{(n-l)/3}}+  \frac{2^{h/4}2^{n/3}}{2^{13l/3}}\right) \\
& \ll & \sum_{h\leq H_1}\sum_{j\leq 2^h} \left(n^{-(1+2\delta)C_0 2^{h/4}}+  \frac{2^{h/4}}{2^{53n/36}}\right)\, \ll \,  n^{-(1+\delta)} \, .
\end{eqnarray*}
The conclusion of the Lemma is now  evident.
\end{proof}

\noindent We may now proceed to the final part of the proof of the upper bound. Choose an arbitrary $0\leq \alpha <1$. By \eqref{frels} we obtain 
\begin{eqnarray*}
\left|\sum_{k\leq N}\chi_{[0,\alpha)}(n_kx) -N\alpha  \right| &\leq & \sum_{h=1}^{H_1}\left|\sum_{k\leq N}\phi_{h}^{(j)}(n_kx) - N\int\phi_h^{(j)}(t)\mathrm{d}t  \right| + 2^{-H_1}N \\
& \leq & \sum_{h\leq H_1}\left( F(0,2^n,j,h)+\sum_{l=n/2}^{n}F(2^n+m_l2^{l-1}, 2^{l-1}.j,h) \right) + 2N^{1/2} \\
& \leq & \sum_{h\leq H_1} 2^{-h/8}\chi(2^n)\left( 1 +\sum_{l=n/2}^{n}2^{(l-n-3)/6} \right) + 2N^{1/2} \\
& \leq & (1+4\delta)(83+332(\sqrt{q}-1)^{-1})(N \log\log N)^{1/2}\, ,
\end{eqnarray*}
%(SHOULDN'T WE ELABORATE ON THE RELATION BETWEEN $\alpha$ AND $j$?)
for all $0\leq x< 1$ lying outside a set of $\mu$--measure at most $\delta_0$. Hence for those $0\leq x<1$ we obtain for any $0\leq \alpha <\beta <1$
\begin{equation*}
\left|\sum_{k\leq N}\chi_{[\alpha,\beta)}(n_kx) - N(\beta-\alpha) \right| \leq (1+4\delta)(166+332(\sqrt{q}-1)^{-1})(N\log\log N)^{1/2}\, . 
\end{equation*}
Taking the supremum over all $0\leq \alpha < \beta<1$, we get
$$ ND_N(n_kx) \leq (1+4\delta) (166+332(\sqrt{q}-1)^{-1})(N\log\log N)^{1/2}  $$
for all $x$ in a set of $\mu$--measure at most $\delta_0$. Now letting $\delta\to 0$ and then $\delta_0\to 0$ we obtain the requested upper bound in Theorem \ref{thm1}.

\begin{rem} It is worth pointing out a minor oversight in Philipp's original proof \cite{philipp}. The reader can easily check that a correct application of Philipp's Proposition \cite[p.244]{philipp} to estimate $\lambda(H_n)$ leads to a bound of the form $\lambda(H(n,j,h,l,m))\ll 2^{7n/24},$ hence relation $(3.8)$ in \cite{philipp} is wrong. The problem can be overcome if we change the definition of the functions $g(x)$ in $(2.5)$ to $g(x)=\sum_{1\leq j\leq T}c_j\cos(2\pi jx),$ with $T$ a sufficiently large integer.  It is likely that this oversight is due to the fact that the symbol $N$ is used for both $D_N(n_kx)$ and $N=2^{l-1}$ in the application of Philipp's Proposition.
\end{rem}

\subsection{The lower bound   }

Given a sequence $(n_k)_{k=1}^{\infty}$, Koksma's Inequality implies that 
\begin{equation*}
ND_N(n_kx) \geq \frac{1}{4} \left|\sum_{k=1}^{N}e(n_kx) \right|, \hspace{5mm} x\in [0,1),
\end{equation*}
see \cite[p.143]{kuipers} for more details. Thus the lower bound in Theorem \ref{thm1} will follow immediately if we prove the following partial generalisation of the result of Erd\H{o}s and Gal in \cite{erdosgal}: 

\begin{prop} \label{muLIL}  Let $(n_k)_{k=1}^{\infty}$ be a lacunary sequence of integers such that \eqref{lacunary} is satisfied. If $\mu$ is a probability measure on $[0,1)$ such that $|\widehat{\mu}(t)| \ll |t|^{-\eta}, |t|\to\infty$ then 
\begin{equation}
\limsup_{N\to\infty}\frac{\left|\sum\limits_{k=1}^{N}e(n_kx) \right|}{\sqrt{N\log\log N}} \, \geq \, 1 \hspace{5mm} \text{ for $\mu$--almost all } x\in[0,1). 
\end{equation}
\end{prop}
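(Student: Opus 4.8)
The plan is to re-run the Erd\H{o}s--Gal argument for the lower bound in the Law of the Iterated Logarithm, replacing the $L^{2}$-orthogonality and independence available with respect to Lebesgue measure by the quasi-independence that the hypothesis \eqref{mudecay} supplies with respect to $\mu$. It suffices to prove that for every $\varepsilon>0$ one has, for $\mu$-almost all $x$,
\[
\Big|\sum_{k=1}^{N}e(n_kx)\Big|\;\geq\;(1-\varepsilon)\sqrt{N\log\log N}\qquad\text{for infinitely many }N,
\]
and then to let $\varepsilon\to0$ through a countable sequence. Fix a sparse family of checkpoints, say $N_j=j^{j}$ --- spread out enough that $N_j/N_{j-1}\to\infty$, yet growing slowly enough that $\sum_j(\log N_j)^{-c}=\infty$ for every $c<1$ --- and, with $S_N(x)=\sum_{k\leq N}e(n_kx)$, consider the block sums $T_j(x)=\sum_{N_{j-1}<k\leq N_j}e(n_kx)$. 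Since $\|S_{N_{j-1}}\|_{L_2(\mu)}^{2}=N_{j-1}+O(1)$, the off-diagonal part $\sum_{k\neq l}\widehat{\mu}(n_l-n_k)$ being $O(1)$ by lacunarity and \eqref{mudecay}, a routine Borel--Cantelli argument (or the already established upper bound of Theorem \ref{thm1}) gives $|S_{N_{j-1}}(x)|=o\bigl(\sqrt{N_j\log\log N_j}\bigr)$ for $\mu$-a.e.\ $x$. Since $|S_{N_j}|\geq|T_j|-|S_{N_{j-1}}|$, it is then enough to prove that $|T_j(x)|\geq(1-\tfrac{\varepsilon}{2})\sqrt{N_j\log\log N_j}$ infinitely often, for $\mu$-a.e.\ $x$.

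The crux is a sharp lower bound for the $\mu$-measure of $A_j=\bigl\{x:|T_j(x)|\geq(1-\tfrac{\varepsilon}{2})\sqrt{N_j\log\log N_j}\bigr\}$, namely
\[
\mu(A_j)\;\geq\;\exp\bigl(-(1+o(1))\,(1-\tfrac{\varepsilon}{2})^{2}\log\log N_j\bigr),
\]
which --- as $(1-\tfrac{\varepsilon}{2})^{2}<1$ --- forces $\sum_j\mu(A_j)=\infty$. This is where \eqref{mudecay} is used. Decomposing the range $(N_{j-1},N_j]$ into sub-blocks of length $\asymp N_j^{1/30}$ exactly as in the proof of Lemma \ref{lemma3}, one splits each power of $T_j$, and each $\exp(\operatorname{Re}(\bar\theta\kappa T_j))$, into a resonant part --- whose integral is the same for $\mu$ as for Lebesgue --- and a non-resonant part whose frequencies are all $\gg q^{\,cN_j^{1/30}}$; balancing this against the combinatorial count of terms, via conditions analogous to $4PT\leq q^{2H}$ and $3T^{2}H^{2}<q^{\eta H}$, and invoking $|\widehat{\mu}(t)|\ll|t|^{-\eta}$, shows the non-resonant contribution is $o(1)$. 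Hence the mixed moments of $T_j$, and the exponential moments $\int\exp\bigl(\operatorname{Re}(\bar\theta\kappa T_j)\bigr)\mathrm{d}\mu$ for $|\theta|=1$ and $\kappa\ll\sqrt{N_j^{-1}\log\log N_j}$, agree with their Lebesgue values up to a factor $1+o(1)$. For Lebesgue these are Gaussian ($\int\exp\bigl(\operatorname{Re}(\bar\theta\kappa T_j)\bigr)\mathrm{d}x=(1+o(1))\exp(\kappa^{2}N_j/4)$, with matching ordinary moments), and feeding the matched estimates into the standard moderate-deviation / Cram\'er-type argument produces $\mu\{|T_j|\geq\lambda\sqrt{N_j}\}\geq\exp(-(1+o(1))\lambda^{2})$ for $\lambda$ of size up to $\sqrt{\log\log N_j}$; taking $\lambda=(1-\tfrac{\varepsilon}{2})\sqrt{\log\log N_j}$ gives the claim.

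It remains to combine the events, which is a Borel--Cantelli argument in the quasi-independent setting. Using \eqref{mudecay} once more --- for $i<j$ the indicators of $A_i$ and $A_j$ are well approximated by trigonometric polynomials whose frequency supports are separated by a gap $\gg q^{\,cN_i^{1/30}}$, so that $\mu(A_i\cap A_j)=(1+o(1))\mu(A_i)\mu(A_j)$ with a negligible error --- one obtains $\sum_{i,j\leq J}\mu(A_i\cap A_j)=(1+o(1))\bigl(\sum_{j\leq J}\mu(A_j)\bigr)^{2}$. Together with $\sum_j\mu(A_j)=\infty$, the Kochen--Stone (Chung--Erd\H{o}s) form of the second Borel--Cantelli lemma gives
\[
\mu\Bigl(\limsup_{j\to\infty}A_j\Bigr)\;\geq\;\limsup_{J\to\infty}\frac{\bigl(\sum_{j\leq J}\mu(A_j)\bigr)^{2}}{\sum_{i,j\leq J}\mu(A_i\cap A_j)}\;=\;1,
\]
so $\mu$-a.e.\ $x$ lies in infinitely many $A_j$, completing the proof.

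The main obstacle is the sharp reverse large-deviation bound on $\mu(A_j)$: the upper-bound half of the paper needed only a one-sided exponential-moment estimate (Lemma \ref{lemma3}), whereas here a genuinely two-sided estimate --- lower bounds on the exponential moments of $T_j$ as well --- plus enough control to run the moderate-deviation argument is required, and the numerology (growth of the $N_j$, admissible range of $\kappa$, sub-block length, order of the moments matched to Gaussian) must be arranged so that all $\widehat{\mu}$-errors stay far below the $1+o(1)$ threshold while the $\mu(A_j)$ still sum to infinity; note that a merely $O(\lambda^{2})$-rate would only yield $\limsup\geq c$ for some $c<1$, so the sharp constant really is essential. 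A secondary point is upgrading ``$\mu(\limsup_j A_j)>0$'' to full measure, which is why the pairwise estimates are pushed to the quantitative form above rather than through a zero-one law, none being available for a general $\mu$.
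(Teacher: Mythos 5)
Your skeleton --- block sums $T_j$ over a sparse sequence, a sharp lower bound for $\mu(A_j)$, then a divergence Borel--Cantelli --- is the right general shape, and the reduction $|S_{N_j}|\geq|T_j|-|S_{N_{j-1}}|$ with the $L_2(\mu)$ bound on $S_{N_{j-1}}$ is fine. But the two load-bearing steps are asserted rather than proved, and neither is routine. First, the lower tail bound $\mu(A_j)\geq\exp(-(1+o(1))\lambda^2)$: you propose to obtain it by matching exponential moments of $T_j$ under $\mu$ to their Lebesgue values and running a Cram\'er-type argument. The machinery of Lemma \ref{lemma3} is intrinsically one-sided: it produces an \emph{upper} bound with a multiplicative loss $2^{\eta}q^{\eta}/(q^{\eta}-1)$ and an extra $(1+2\delta)$ in the exponent, which is harmless for the upper half of the LIL but fatal here, where the constant in the exponent must be exactly $1+o(1)$; and even under Lebesgue measure the exponential moments of a lacunary sum are not Gaussian, so the matching \emph{lower} bound on $\int\exp(\mathrm{Re}(\bar\theta\kappa T_j))\,\mathrm{d}\mu$ is itself a substantial open step in your write-up. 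The paper avoids this entirely by following Erd\H{o}s--Gal's moment method: the counting Lemmas \ref{erdoslemma1}--\ref{erdoslemma4} control the resonances of the linear forms $A(x,y)$, the Fourier decay kills the off-resonant terms, and Proposition \ref{I2pestimate} shows that the $2p$-th moments match $p!N^p$ for all $p\leq 3\log\log N$; matched moments of that order convert into the tail lower bound $\phi(1-\varepsilon)>\mu((\alpha,\beta))(\log N)^{-(1-4\varepsilon^2)}$ by the argument of Erd\H{o}s--Gal's Lemma 8.

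The second and more serious gap is the claim $\mu(A_i\cap A_j)=(1+o(1))\mu(A_i)\mu(A_j)$. Approximating $\chi_{A_i}$ and $\chi_{A_j}$ by trigonometric polynomials with separated spectra cannot deliver this at the accuracy Kochen--Stone needs: to get limsup measure $1$ (rather than some constant $c<1$) the factorization error must be $o(\mu(A_i)\mu(A_j))$, i.e.\ of order $(\log N_i\log N_j)^{-1}$, while $A_j$ is a superlevel set of a trigonometric polynomial of degree $n_{N_j}\geq q^{N_j}$ whose indicator admits no band-limited approximation with $L^1(\mu)$-error remotely that small and with controlled spectrum. The paper's route to full measure is different, and this is precisely why Lemma \ref{decaylemma} is in the paper: every moment estimate is carried out for the normalized restriction $\nu=\mu(\cdot\cap B)/\mu(B)$ of $\mu$ to an arbitrary subinterval $B$ (which retains polynomial Fourier decay), so the lower bound on the measure of the deviation set holds \emph{relative to every interval} with $\mu(B)\geq 1/(n_1^{\eta}\sqrt N)$; combining the blocks as in Erd\H{o}s--Gal, pp.~77--80, and then applying a density/differentiation argument upgrades positive relative measure on all intervals to full $\mu$-measure, with no pairwise-independence input at the $1+o(1)$ level. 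To repair your proof you should either adopt the $2p$-th moment method (the Diophantine counting lemmas supply exactly the quasi-orthogonality required) or, at minimum, replace the Kochen--Stone step by this interval-localization argument; as written, both the reverse large-deviation estimate and the asymptotic independence are unsupported.
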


\noindent The proof of Proposition \ref{muLIL} is essentially the same as in \cite{erdosgal}, with the only modifications being those relevant to the fact that $\mu$ is a probability measure other than the Lebesgue measure. We present here all steps of the proof which are essentially different and refer the reader to \cite{erdosgal} for the remaining parts.

\subsubsection{On the number of solutions of certain Diophantine inequalities}
In what follows the postive integers $p,N$ are fixed, the sequence $(n_k)_{k=1}^{\infty}$ is as in Theorem \ref{thm1} and 
$$ A(x,y) \equiv A(x_1,\ldots,y_p) = (x_1+\ldots+x_p) - (y_1+\ldots+y_p)$$
is a linear form in $2p$ variables which are allowed to take values in the set $\{n_1, \ldots , n_N\}.$

\begin{lem} \label{erdoslemma1} For any $0 < \alpha < \beta,$  
$$ \#\{1\leq k\leq N : \alpha \leq n_k \leq \beta \}  \leq  \frac{\log\left( \beta\alpha^{-1} q  \right)}{\log q} \cdot $$
\end{lem} 
\begin{proof} This is Lemma $1$ in \cite{erdosgal}. \end{proof}

\noindent In what follows, given $s\in\mathbb{R}$ and $r>0$ we write $B(s,r):=(s-r, s+r)$ for the interval with center $s$ and length $2r$.

\begin{lem} \label{erdoslemma2} For positive integers $N, K \geq 1$ and $s\in\mathbb{Z}$ we write $\phi(1,N;s,K)$ for the number of pairs $(n_k,n_l)$ with $1\leq k,l \leq N$ such that $n_k - n_l \in B(s, 2^{K-1})$ and $n_k\neq n_l$. Then
\begin{equation} \label{erdoseq0} \phi(1,N;s,K) \leq 2 \frac{\log^2(2^Kq^2(q-1)^{-1})}{\log^2q} \cdot 
\end{equation}
\end{lem}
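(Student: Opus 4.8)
The plan is to count pairs $(n_k,n_l)$ with $n_k\neq n_l$ and $n_k-n_l\in B(s,2^{K-1})$ by first restricting the \emph{sizes} of $n_k$ and $n_l$, and then using Lemma~\ref{erdoslemma1} to bound the number of admissible indices in each size range. First I would split into the cases $s>0$ and $s\leq 0$; by symmetry (swapping the roles of $k$ and $l$ replaces $s$ by $-s$) it suffices to treat $s\geq 0$, say, and the case $s=0$ is vacuous since we require $n_k\neq n_l$. So assume $s\geq 1$, so that $n_k>n_l$ is forced once $|n_k-n_l|<s+2^{K-1}$ is small relative to $s$; more precisely, $n_k-n_l\in(s-2^{K-1},s+2^{K-1})$.

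Next I would observe that if $n_k-n_l=d$ with $d\asymp s$ (up to the additive $2^{K-1}$), then because $n_{k'+1}/n_{k'}\geq q$ we have $n_k\geq n_k-n_l=d$, so $n_k$ lies in a bounded range; quantitatively $n_k>n_k-n_l \geq s-2^{K-1}$ is not yet a useful \emph{two-sided} bound, so instead I would note the cleaner fact: fixing the larger element $n_k$, the smaller element satisfies $n_l=n_k-d$ with $d<s+2^{K-1}$, hence $n_l>n_k-(s+2^{K-1})$. The key point is the standard lacunary trick: for a fixed value of $n_k$, the partners $n_l$ with $n_k-n_l\in B(s,2^{K-1})$ all lie in an interval of multiplicative length $\frac{n_k}{n_k-(s+2^{K-1})}$, but it is much cleaner to fix instead the larger of the two and range over the number of valid $n_l$, using that there can be at most $\frac{\log(2^Kq/(q-1)\cdot\text{something})}{\log q}$ of them by Lemma~\ref{erdoslemma1}; symmetrically the number of valid $n_k$ over all is controlled by the total number of $n_k$ in the relevant size window, again via Lemma~\ref{erdoslemma1}. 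Multiplying the two counts — one for the ``outer'' variable ranging over a size window of multiplicative width $\ll 2^K q^2/(q-1)$ and one for the ``inner'' variable — gives the product of two logarithms, i.e.\ the bound $2\log^2\!\bigl(2^Kq^2(q-1)^{-1}\bigr)/\log^2 q$, the factor $2$ absorbing the two cases $s>0$, $s<0$.

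The step I expect to be the main obstacle is getting the constant inside the logarithm to come out to exactly $2^Kq^2(q-1)^{-1}$ rather than some larger absolute multiple of it: this requires being careful that when $n_k-n_l\in B(s,2^{K-1})$ one controls $n_k$ (or whichever variable is fixed second) to lie in an interval $[\alpha,\beta]$ with $\beta/\alpha \leq 2^K q^2/(q-1)$, which in turn uses $s-2^{K-1}\geq 1$ together with the geometric-series estimate $\sum_{j\geq 1}q^{-j}=(q-1)^{-1}$ to pin down how much smaller $n_l$ can be than $n_k$ before the difference leaves $B(s,2^{K-1})$. Once that size window is correctly identified, the rest is a direct application of Lemma~\ref{erdoslemma1} to both variables and the bookkeeping of the two sign cases; this is essentially Lemma~2 of \cite{erdosgal} transcribed with the present normalisation, so I would cite that source for the routine parts and only spell out the window-width computation in detail.
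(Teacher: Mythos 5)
Your overall route is the paper's: reduce to $s\geq 0$ by the symmetry $\phi(1,N;s,K)=\phi(1,N;-s,K)$, locate the larger element $n_k$ in a multiplicative window of ratio at most $2^Kq^2(q-1)^{-1}$ using the one-step lacunarity bound $n_k-n_l\geq n_k(1-n_l/n_k)\geq n_k(1-q^{-1})$, apply Lemma~\ref{erdoslemma1} to $n_k$ and then again to $n_l$, and multiply the two counts. (Note in passing that the factor $(1-q^{-1})^{-1}=q(q-1)^{-1}$ comes from this single-step estimate, not from the geometric series $\sum_{j\geq 1}q^{-j}$.)

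However, your sign and case bookkeeping has genuine errors, and the step you yourself flag as ``the main obstacle'' is precisely the content of the proof, so it cannot be deferred. First, $s=0$ is not vacuous: the condition is $n_k-n_l\in B(0,2^{K-1})=(-2^{K-1},2^{K-1})$, an interval of length $2^K$, not the single equation $n_k-n_l=0$. Second, for $1\leq s\leq 2^{K-1}$ the interval $B(s,2^{K-1})$ still contains negative numbers, so $n_k>n_l$ is \emph{not} forced; the factor $2$ in \eqref{erdoseq0} is there exactly to cover the two orderings of the pair in this regime (each ordering contributing at most $\log^2(2^Kq^2(q-1)^{-1})/\log^2 q$ pairs with $1\leq |n_k-n_l|\leq 2^K$), not to cover the two signs of $s$ --- that symmetry is already spent when you reduce to $s\geq 0$. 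As written, your argument would count only the pairs with $n_k>n_l$ and then double for a reason that does not apply, landing on the right number by accident. Finally, the constant $2^Kq^2(q-1)^{-1}$ inside the logarithm requires the case split $0\leq s\leq 2^{K-1}$ versus $s>2^{K-1}$: in the first case $1\leq n_k\leq 2^K(1-q^{-1})^{-1}$ directly, while in the second one must combine the lower bound $n_k\geq s-2^{K-1}$ with the upper bound $n_k\leq(s+2^{K-1})(1-q^{-1})^{-1}$ to get a window of the correct ratio, and then handle $n_l\in[n_k-s-2^{K-1},\,n_k-s+2^{K-1}]$ separately. Without carrying out this computation the stated bound is not established.
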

\begin{proof} Since $\phi(1,N;s,K)=\phi(1,N;-s,K),$ we may assume without loss of generality that $s\geq 0$. If $0\leq s \leq 2^{K-1}$ then $\frac{1}{2}\phi(1,N;s,K)$ is at most equal to the number of pairs $(n_k,n_l)$ such that $1\leq n_k - n_l \leq 2^K$. If $(n_k,n_l)$ is such a pair, then 
\begin{equation*} 
2^{K} \geq n_k - n_l = n_k\left(1-\frac{n_l}{n_k}\right) \geq n_k(1-q^{-1}),
\end{equation*} 
hence $1 \leq n_k \leq 2^{K}(1-q^{-1})^{-1}$ and by Lemma \ref{erdoslemma1} the number of admissible $n_k$'s is at most $ \dfrac{\log(2^Kq^2(q-1)^{-1})}{\log q} \cdot $ Now we fix an admissible value of $n_k$ and we count the number of $n_l$'s which are acceptable for that specific $n_k$. Such $n_l$'s satisfy $1\leq n_l < n_k \leq 2^K(1-q^{-1})^{-1}$, so by Lemma \ref{erdoslemma1} their number is at most  $\dfrac{\log(2^Kq^2(q-1)^{-1})}{\log q}$. Hence the number of possible pairs $(n_k,n_l)$ is at most 
$$ \frac{\log^2(2^Kq^2(q-1)^{-1})}{\log^2q} . $$
On the other hand, if $s>2^{K-1}$ then $s+2^{K-1}\geq n_k - n_l \geq n_k(1-q^{-1})$ and $n_k \geq \max(1,s-2^{K-1}) = s-2^{K-1}$, hence $$s-2^{K-1} \leq n_k \leq (s+2^{K-1})(1-q^{-1})^{-1} $$ and by Lemma \ref{erdoslemma1} there are at most 
$$ \frac{1}{\log q}\log\left(\frac{s+2^{K-1}}{s-2^{K-1}} q(1-q^{-1})^{-1}\right) \leq \frac{\log(2^Kq^2(q-1)^{-1})}{\log q} $$
possible values for $n_k$. Regarding $n_l$, we get $n_k-s-2^{K-1} \leq n_l \leq n_k - s + 2^{K-1}$. Considering the cases $n_k\leq 2^{K-1}$ and $n_k > 2^{K-1}$ separately, Lemma \ref{erdoslemma1} gives at most $\dfrac{\log(2^Kq)}{\log q}$ values for $n_l$, and the number of pairs $(n_k,n_l)$ is again bounded above by  
$$ \frac{\log^2(2^Kq^2(q-1)^{-1})}{\log^2q} . $$
% Hence the requested number of pairs $(n_k,n_l)$ is at most 
%$$   \frac{\log(2q^2(q-1)^{-1})}{\log q}  \cdot \frac{\log(2^{K+1}q)}{\log q}\leq  2 \log(2^Kq) \frac{\log(2q^2(q-1)^{-1})}{(\log q)^2} \, \cdot $$ 
\end{proof}

\begin{lem} \label{erdoslemma3} For positive integers $N, K \geq 1$ and $s\in\mathbb{Z}$ let $\phi_p(s,K)$ be the number of pairs $(x_p,y_p)$ such that $  A(x,y) \in B(s,2^{K-1}),$ subject to the restrictions $x_1\leq \ldots \leq x_p$ and $y_1\leq \ldots \leq y_p$. Then  
\begin{equation} \label{erdoseq}\phi_p(s,K) \leq 4pN\frac{\log(2^{K+2}q)}{\log q} \cdot 
\end{equation}
\end{lem}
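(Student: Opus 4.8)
The plan is to estimate $\phi_p(s,K)$ by fixing the two maximal entries $x_p$ and $y_p$ of the monotone blocks and showing that the lacunary gap condition \eqref{lacunary}, fed into Lemma \ref{erdoslemma1}, confines the admissible configurations very tightly. First I would reduce to a one–sided count: swapping the two blocks, $(x_1,\ldots,x_p,y_1,\ldots,y_p)\mapsto(y_1,\ldots,y_p,x_1,\ldots,x_p)$, replaces $A(x,y)$ by $-A(x,y)$ and identifies the configurations with $y_p>x_p$ counted by $\phi_p(s,K)$ with those with $x_p>y_p$ counted by $\phi_p(-s,K)$; since the asserted bound is independent of $s$, it is enough to bound the configurations with $x_p\ge y_p$ and double. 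Among those, the diagonal case $x_p=y_p$ contributes at most $N$ (one per common value of the two top entries), so the task is to bound the configurations with $x_p>y_p$.

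Fix such a configuration and write $A(x,y)=x_p+\sum_{i<p}x_i-\sum_{i\le p}y_i$. Using $\sum_{i<p}x_i\ge 0$ and $\sum_{i\le p}y_i\le p\,y_p$ on one side, and $\sum_{i<p}x_i\le(p-1)x_p$ and $\sum_{i\le p}y_i\ge 0$ on the other, the membership $A(x,y)\in B(s,2^{K-1})$ forces both
$$ y_p \,>\, \frac{x_p-s-2^{K-1}}{p} \qquad\text{and}\qquad x_p \,>\, \frac{s-2^{K-1}}{p}\, . $$
Together with $1\le y_p<x_p$ these place, for each fixed $x_p$, the admissible $y_p$ in an interval whose endpoint ratio is $\ll p$ provided $x_p$ is not too small relative to $|s|+2^{K}$ (say $x_p\ge 2(|s|+2^{K})$); by Lemma \ref{erdoslemma1} this gives at most $\log(cpq)/\log q\le p\,\log(2^{K+2}q)/\log q$ values of $y_p$ for such $x_p$, and summing over the at most $N$ values of $x_p$ yields the main term $\ll pN\log(2^{K+2}q)/\log q$. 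The base case $p=1$ is the same computation without the tail sums: $y_1$ is free (at most $N$ values), $x_1$ must lie in the length‑$2^K$ interval $B(s+y_1,2^{K-1})$, and Lemma \ref{erdoslemma1}, applied with the left endpoint truncated at $1$ to cope with a possibly non‑positive lower end, gives $\le\log(2^{K+1}q)/\log q$ choices for $x_1$.

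The delicate point, and the one I expect to be the main obstacle, is keeping the exponent of $N$ equal to $1$ while treating the residual range of $x_p$ lying close to $|s|+2^{K}$, where the lower bound on $y_p$ degenerates to the trivial $y_p<x_p$: here one uses the companion bound $x_p>(s-2^{K-1})/p$, which (for $s\ge 2^{K-1}$) confines $x_p$ itself to an interval of bounded ratio, so that the residual contribution is $O\!\bigl((\log(2^{K+2}q)/\log q)^2\bigr)$, absorbed by the generous constant in the parameter range in which the lemma is used (where $2^K$ is at most polynomial in the terms of the lacunary sequence, hence $\log(2^{K+2}q)/\log q=O(N)$). Combining the two cases $x_p\gtrless y_p$, the diagonal term, the residual term, and absorbing the stray factors $\log(2p)$, the $p$ from the confinement of $y_p$, and the residual term into the constant, gives $\phi_p(s,K)\le 4pN\,\log(2^{K+2}q)/\log q$. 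The same bookkeeping can alternatively be organised as an induction on $p$, peeling off the single largest of the $2p$ variables at each stage and invoking Lemma \ref{erdoslemma2} for the two‑variable sub‑count that remains; the confinement of $y_p$ above is exactly the per‑step increment $\ll N\log(2^{K+2}q)/\log q$ that keeps the constant linear in $p$.
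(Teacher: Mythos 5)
Your skeleton is the same as the paper's: split on the size of the top entry $x_p$, use the lacunarity count of Lemma \ref{erdoslemma1} to confine one of $x_p,y_p$ to an interval of bounded endpoint ratio, and pay the trivial factor $N$ for the other. Your ``main term'' (large $x_p$, forcing $y_p\in(x_p/(2p),x_p)$, hence $\ll p\log(2^{K+2}q)/\log q$ choices of $y_p$ per $x_p$) is exactly the paper's second case, and the reduction to $s\ge 0$ is also as in the paper.

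The genuine problem is your residual range $x_p<2(|s|+2^{K})$. You claim its contribution is $O\bigl((\log(2^{K+2}q)/\log q)^2\bigr)$ and then absorb it by assuming $\log(2^{K+2}q)/\log q=O(N)$. Neither step survives scrutiny. First, the second logarithmic factor cannot come from counting $y_p$: in that range the only constraint on $y_p$ is $1\le y_p<x_p\le 2(|s|+2^{K})$, so Lemma \ref{erdoslemma1} gives at most $\log\bigl(2(|s|+2^{K})q\bigr)/\log q$ values, a quantity growing with $|s|$, whereas \eqref{erdoseq} must be uniform in $s\in\mathbb{Z}$. Second, the absorption step imports a hypothesis the lemma does not have: it is stated for all $N,K\ge 1$, and in the place where it is actually used (Lemma \ref{erdoslemma4} feeding into Proposition \ref{I2pestimate}) the relevant $K$ ranges up to $\log_2(2pn_N)$, which for a rapidly growing lacunary sequence is not $O(N)$; so ``$\log(2^{K+2}q)/\log q=O(N)$'' is false in the intended range of application, not merely unstated. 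The repair is exactly what the paper does in its first case: for $s>3\cdot 2^{K-1}$ the constraint $p^{-1}(s-2^{K-1})\le x_p\le 4(s-2^{K-1})$ confines $x_p$ to at most $\log(4pq)/\log q\le p\log(2^{K+2}q)/\log q$ values, for $0\le s\le 3\cdot 2^{K-1}$ one has $1\le x_p\le 2^{K+2}$ and at most $\log(2^{K+2}q)/\log q$ values, and in either subcase one bounds the number of choices of $y_p$ simply by $N$. This gives $pN\log(2^{K+2}q)/\log q$ for the residual range, with no dependence on $s$ and no extra hypotheses, and the total $\le 4pN\log(2^{K+2}q)/\log q$ follows.
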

\begin{proof}Since $\phi_p(s,K)=\phi_p(-s,K)$, we may assume without loss of generality that $s\geq 0$.
First we count the number of requested pairs $(x_p,y_p)$ for which $x_p\leq 2s+ 2^{K}$. The assumptions imply that 
\begin{equation*} s- 2^{K-1} \leq (x_1 + \ldots + x_p) - (y_1 + \ldots + y_p) \leq px_p,\end{equation*}
hence $p^{-1}(s-2^{K-1}) \leq x_p \leq 2^{K+2} $. When $s>3\cdot2^{K-1}$, we have $\frac{1}{p}(s-2^{K-1})\leq x_p\leq 4(s-2^{K-1})$ and by Lemma \ref{erdoslemma1} there are at most $$ \frac{\log(4pq)}{\log q} \leq p\frac{\log(2^{K+2}q)}{\log q} $$ possible values for $x_p$. When $0\leq s\leq 3\cdot 2^{K-1},$ we have $1\leq x_p \leq 2s+2^K \leq 2^{K+2}$ and there are at most $$ \frac{\log(2^{K+2}q)}{\log q} \leq p\frac{\log(2^{K+2}q)}{\log q}$$ values of $x_p$. In both cases for $s$, there are at most $N$ choices for $y_p$, so the number of possible pairs $(x_p,y_p)$ with $x_p\leq 2^{K+2}$ is bounded above by $$pN \frac{\log(2^{K+2}q)}{\log q} \cdot $$
Next we count the number of requested pairs $(x_p,y_p)$ for which $x_p > 2^{K+2}$. The assumptions now imply that 
\begin{equation*} x_p \leq py_p +s+ 2^{K-1} \leq py_p + \frac{1}{2}x_p, \end{equation*}
hence $$ \frac{x_p}{2p} \leq y_p \leq px_p -s+ 2^{K-1} \leq px_p +2^{K-1} < 2px_p $$
and the number of possible values for $y_p$ is by Lemma \ref{erdoslemma1} at most $\dfrac{\log(4p^2q)}{\log q}$. Since there are at most $N$ possible choices for $x_p$, we have the upper bound 
$$ N \frac{\log(2p^2q)}{\log q} \leq 2Np \frac{\log(2^{K+2}q)}{\log q}$$ for the number of pairs $(x_p,y_p)$ with $x_p>2s+2^{K-1}.$ Combining the estimates for the two cases we obtain the requested bound \eqref{erdoseq}.

\end{proof}

\begin{lem}\label{erdoslemma4} For $1\leq p \leq N$ and $K\geq 1$ we write $\phi(p,N)$ for the number of solutions of $A(x,y)=0$ and $\phi(p,N;s,K)$ for the number of solutions of $ A(x,y) \in B(s,2^{K-1}) $, both subject to the restrictions $x_1\leq \ldots \leq x_p$ and $y_1\leq \ldots \leq y_p$. Then 
\begin{equation}  \label{erdoseq2}
\binom{N}{p} \leq \phi(p,N) \leq \binom{N}{p} + (cp)^p N^{p-1} 
\end{equation}
and
\begin{equation} \label{erdoseq3}
0 \leq \phi(p,N;s,K) \leq 2(4p)^{p-1}N^{p-1}\left( \frac{\log\left(2^{K+2}q^2(q-1)^{-1}\right)}{(\log q)^2}\right)^{p+1} \cdot
\end{equation}
\end{lem}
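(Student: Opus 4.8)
The plan is to derive both estimates \eqref{erdoseq2} and \eqref{erdoseq3} by induction on $p$, bootstrapping from the two-variable counting bounds already established in Lemmas \ref{erdoslemma1}--\ref{erdoslemma3}. For the lower bound $\binom{N}{p}\leq \phi(p,N)$ in \eqref{erdoseq2}, observe that every choice $x_1\leq\cdots\leq x_p$ of distinct values, together with $y_i=x_i$, gives a solution of $A(x,y)=0$ respecting the monotonicity constraints; counting these alone yields $\binom{N}{p}$ solutions. For the upper bound, I would split the count of solutions of $A(x,y)=0$ according to whether the multisets $\{x_1,\dots,x_p\}$ and $\{y_1,\dots,y_p\}$ coincide (the ``diagonal'' contribution, bounded by the number of monotone $p$-tuples, i.e.\ at most $\binom{N}{p}$) or differ. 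In the off-diagonal case, one may pair off a largest index where they disagree and use Lemma \ref{erdoslemma2} (with $s=0$) to bound the number of ways the remaining coordinates can compensate: fixing all but two of the variables forces the last pair to lie in a set of bounded size, and after summing over the $O(N^{p-1})$ choices of the free coordinates and absorbing the logarithmic factors into the constant $c$, one arrives at the claimed term $(cp)^pN^{p-1}$.

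For \eqref{erdoseq3}, the natural route is again induction on $p$, using Lemma \ref{erdoslemma3} as the base-case input. Write $A(x,y)=A'(x',y') + (x_p - y_p)$ where $A'$ is the linear form in the first $2(p-1)$ variables. For a solution lying in $B(s,2^{K-1})$, condition on the value of $A'(x',y')$: if $A'(x',y')\in B(t, 2^{K-1})$ for some dyadic center $t$, then $x_p - y_p$ must lie in $B(s-t, 2^{K})$, and by Lemma \ref{erdoslemma2} the number of such pairs $(x_p,y_p)$ is at most $O\big((\log q)^{-2}\log^2(2^{K+1}q^2(q-1)^{-1})\big)$. The number of admissible $(x',y')$ for each $t$ is $\phi(p-1,N;t,K)$, and there are at most $O(N)$ relevant centers $t$ (since $A'$ takes values in an interval of length $O(N\cdot n_N)$... more carefully, one restricts to the $O(N)$ values of $t$ that can actually be hit given the fixed $x_p,y_p$, or sums a geometric-type series in $t$ using the monotonicity). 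Multiplying the three factors — $O(N)$ centers, the inductive bound $2(4p)^{p-2}N^{p-2}(\cdots)^{p}$ on $\phi(p-1,N;t,K)$, and the two-variable bound $O((\cdots)^2)$ — and checking that the polynomial-in-$p$ constants combine into $(4p)^{p-1}$ while the logarithmic powers combine into $(\cdots)^{p+1}$, gives \eqref{erdoseq3}.

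The main obstacle I anticipate is the bookkeeping in the inductive step for \eqref{erdoseq3}: one must be careful that summing over the intermediate centers $t$ really costs only a factor $O(N)$ (rather than something that grows with $K$), which requires exploiting that the variables are drawn from the lacunary set $\{n_1,\dots,n_N\}$ so that $A'(x',y')$ ranges over $O(N)$ dyadic blocks once the largest coordinate is pinned down — this is exactly the kind of estimate packaged in Lemma \ref{erdoslemma1}. Getting the constants to collapse into the precise form $2(4p)^{p-1}$ and the exponent of the logarithm to be exactly $p+1$ will require tracking the base case ($p=1$, where $\phi_1(s,K)$ is a two-variable count and \eqref{erdoseq} already gives the shape) and verifying the induction multiplies the exponent up by one at each step; the argument of the logarithm $2^{K+2}q^2(q-1)^{-1}$ is chosen generously enough to absorb the loss of replacing $2^{K+1}$ by $2^{K+2}$ when passing from $B(s,2^{K-1})$ to $B(s-t,2^K)$, so no essential difficulty arises there.
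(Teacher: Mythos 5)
Your overall shape (induction on $p$, base case from Lemma \ref{erdoslemma2}, peeling off one pair of variables) matches the paper's, and your lower bound for \eqref{erdoseq2} is the standard diagonal count; the paper in fact does not reprove \eqref{erdoseq2} at all but simply cites Lemma 7 of Erd\H{o}s--Gal. However, your inductive step for \eqref{erdoseq3} has a genuine gap, and it is exactly the one you flag as "the main obstacle." Conditioning on the value of $A'(x',y')$ lying in a dyadic window $B(t,2^{K-1})$ and multiplying by the number of relevant centers $t$ cannot work: $A'$ ranges over an interval of length of order $p\,n_N$, and since $n_N\geq q^{N-1}n_1$ the number of windows of width $2^K$ needed to cover it is exponential in $N$, not $O(N)$. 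The fallback you suggest ("restrict to the $O(N)$ values of $t$ that can be hit given the fixed $x_p,y_p$") is circular, since $t$ is determined by $(x',y')$, not by $(x_p,y_p)$. Moreover, even granting $O(N)$ centers, your three-factor product gives $N\cdot N^{p-2}(\cdots)^{p}\cdot(\cdots)^{2}=N^{p-1}(\cdots)^{p+2}$, one logarithmic power too many, and the product of "number of $(x',y')$ per window" with "number of pairs $(x_p,y_p)$" over-counts because these two counts are not independent.

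The device that makes the induction work -- and which you already use correctly in your sketch for \eqref{erdoseq2} but abandon for \eqref{erdoseq3} -- is that the bound of Lemma \ref{erdoslemma2} is \emph{uniform in the center $s$}. So no enumeration of centers is needed: split on whether a designated pair of variables coincides. If $x_1=y_1$ the pair cancels from $A(x,y)$ and you get at most $N\,\phi(p-1,N;s,K)$ solutions. If $x_1\neq y_1$, first fix the remaining $2(p-1)$ variables (at most $N^{p-1}$ times logarithmic factors choices, via Lemma \ref{erdoslemma3}); for each such choice the pair $(x_1,y_1)$ is forced into the single window $B(s-A'(x',y'),2^{K-1})$, and Lemma \ref{erdoslemma2} bounds the number of such pairs with $x_1\neq y_1$ by $2\log^2(2^{K}q^2(q-1)^{-1})/\log^2 q$ regardless of where that window sits. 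The resulting recursion $\phi(p,N;s,K)\leq N\,\phi(p-1,N;s,K)+(\text{count of }2(p-1)\text{-tuples})\cdot 2\log^2(\cdots)/\log^2 q$ is what yields the stated exponents. The case split on equality of the designated pair is also forced on you by the statement of Lemma \ref{erdoslemma2}, which only counts pairs with $n_k\neq n_l$; your decomposition never produces the $N\,\phi(p-1,N;s,K)$ term and hence cannot close the induction.
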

\begin{proof} Inequality \eqref{erdoseq2} is proved in Lemma $7$ of \cite{erdosgal}. In order to prove \eqref{erdoseq3}, we fix the value of $K$ and use induction on $p\geq 1$. For $p=1$,  \eqref{erdoseq3} is implied by \eqref{erdoseq0}. Now we assume \eqref{erdoseq3} is true for $1,2,\ldots, p-1$ and we seek an upper estimate for $\phi(p,N;s,K)$. To do this, we consider separately two sets of solutions:  First, those $2p$-tuples $(x_1,\ldots,y_p)$ with $x_1=y_1$. Then the number of tuples $(x_2,\ldots,x_p,y_2,\ldots,y_p)$ with $s-2^{K-1} \leq (x_2+\ldots+x_p)-(y_2+\ldots+y_p)\leq s+2^{K-1}$ is at most $\phi(p-1,N;s,K)$ and there are $N$ possible values for $(x_1,y_1)$, hence we have at most $$N\phi(p-1,N;s,K)$$ solutions of that kind. Next we consider $2p$-tuples with $x_1\neq y_1$. By \eqref{erdoseq} the number of $2(p-1)$-tuples $(x_2,\ldots,x_p,y_2,\ldots,y_p)$ is at most
\begin{equation*} \left(4pN \frac{2^{K+2}q^2(q-1)^{-1}}{\log q}  \right)^{p-1} \, \cdot
\end{equation*} 
For each such $2(p-1)$-tuple, the number of acceptable pairs $(x_1,y_1)$ with $x_1\neq y_1$ is given by \eqref{erdoseq0} and is at most $$2\frac{\log^2(2^{K+2}q^2(q-1)^{-1})}{\log^2 q} \cdot$$ Combining the two cases, we obtain 
\begin{eqnarray*}
\phi(p,N;s,K) &\leq & N\phi(p-1,N;s,K) + 2\frac{\log^2(2^{K+2}q^2(q-1)^{-1})}{\log^2 q} \\
& \leq & 2(4(p-1))^{p-2}N^{p-1}\left( \frac{\log\left(2^{K+2}q^2(q-1)^{-1}\right)}{(\log q)^2}\right)^{p} + 2\frac{\log^2(2^{K+2}q^2(q-1)^{-1})}{\log^2 q} \\
& \leq & 2(4p)^{p-1}N^{p-1}\left( \frac{\log\left(2^{K+2}q^2(q-1)^{-1}\right)}{(\log q)^2}\right)^{p+1} \cdot
\end{eqnarray*} 
\end{proof}

Now we are able to prove the final goal of this subsection, which is giving an estimate for the number of solutions of equations of the form $A(x,y)\in B(s,2^{K-1})$. The result follows immediately from Lemma \ref{erdoslemma4} since each solution to the aforementioned equation under the restrictions $x_1\leq \ldots x_p, y_1\leq \ldots \leq y_p $ gives rise to $(p!)^2$ solutions $(x_1,\ldots,y_p)$.  

\begin{lem}For positive integers $1\leq p \leq N$, there exists a constant $c>0$ such that for any $s\in\mathbb{Z}$
\begin{equation} \label{erdoseq4}
(p!)^2\binom{N}{p} \leq \sum_{A(x,y)=0} 1  \leq (p!)^2\binom{N}{p} + (cp)^{3p}N^{p-1} 
\end{equation}
and
\begin{equation}\label{erdoseq5}
0 \leq \sum_{A(x,y)\in B(s,2^{K-1})}\hspace{-4mm}1 \leq \,2(4p)^{3p-1}N^{p-1}\left( \frac{\log\left(2^{K+2}q^2(q-1)^{-1}\right)}{(\log q)^2}\right)^{p+1} \cdot
\end{equation}
\end{lem}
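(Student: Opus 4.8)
The statement is an immediate corollary of Lemma~\ref{erdoslemma4}, obtained by removing the ordering restrictions $x_1\le\dots\le x_p$ and $y_1\le\dots\le y_p$. The plan is as follows. First I would observe that every solution $(x_1,\dots,x_p,y_1,\dots,y_p)$ of $A(x,y)=0$ (respectively $A(x,y)\in B(s,2^{K-1})$) with all $x_i,y_j\in\{n_1,\dots,n_N\}$ can be reordered so that the $x$-coordinates are nondecreasing and the $y$-coordinates are nondecreasing, and conversely each such ordered solution arises from at most $(p!)^2$ unordered solutions (exactly $(p!)^2$ when all coordinates within each block are distinct, fewer when there are repetitions; in all cases the crude bound ``$\le(p!)^2$ times'' holds). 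Hence
\begin{equation*}
\sum_{A(x,y)=0}1 \,\le\, (p!)^2\,\phi(p,N), \qquad \sum_{A(x,y)\in B(s,2^{K-1})}1 \,\le\, (p!)^2\,\phi(p,N;s,K),
\end{equation*}
and likewise the lower bound $(p!)^2\binom{N}{p}\le\sum_{A(x,y)=0}1$ comes from the fact that the $\binom{N}{p}$ ``diagonal'' solutions with $\{x_i\}=\{y_j\}$ as multisets, together with all their orderings, are counted.

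\textbf{Key steps.} Second, I would plug in the bounds from \eqref{erdoseq2} and \eqref{erdoseq3}. For \eqref{erdoseq4}: multiplying \eqref{erdoseq2} by $(p!)^2$ gives an upper bound $(p!)^2\binom{N}{p}+(p!)^2(cp)^pN^{p-1}$; using the crude estimate $(p!)^2\le p^{2p}$ (or Stirling), the error term $(p!)^2(cp)^pN^{p-1}$ is absorbed into $(c'p)^{3p}N^{p-1}$ for a suitable absolute constant $c'$, which after renaming $c$ yields \eqref{erdoseq4}. For \eqref{erdoseq5}: multiplying \eqref{erdoseq3} by $(p!)^2\le p^{2p}$ turns the prefactor $2(4p)^{p-1}$ into $2(4p)^{p-1}p^{2p}\le 2(4p)^{3p-1}$, which is exactly the claimed constant, while the $N^{p-1}$ and the logarithmic factor are unchanged; the lower bound $0\le\sum_{A(x,y)\in B(s,2^{K-1})}1$ is trivial.

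\textbf{Main obstacle.} There is no serious obstacle here --- this lemma is purely bookkeeping. The only point requiring a little care is the constant arithmetic in passing from the ordered to the unordered count: one must check that $(p!)^2(4p)^{p-1}\le(4p)^{3p-1}$ (equivalently $(p!)^2\le(4p)^{2p}$, which is clear since $p!\le p^p\le(4p)^p$) and similarly that $(p!)^2(cp)^p\le(c'p)^{3p}$, so that the final constants can be taken absolute and independent of $p$ and $N$. I would also note explicitly that the constant $c$ in \eqref{erdoseq4} need not be the same as the one in \eqref{erdoseq2}; it is chosen large enough to absorb the factorial factors, and this is harmless since $c$ is only ever used as ``some absolute constant'' in the sequel.
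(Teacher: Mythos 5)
Your proposal is correct and matches the paper's approach: the paper likewise deduces this lemma immediately from the preceding lemma by noting that each ordered solution (with $x_1\leq\dots\leq x_p$, $y_1\leq\dots\leq y_p$) corresponds to at most $(p!)^2$ unordered ones, and absorbs the factor $(p!)^2\leq p^{2p}$ into the constants exactly as you do. Your treatment of the lower bound via the $\binom{N}{p}$ distinct-entry diagonal solutions is in fact slightly more careful than the paper's one-line justification.
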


\subsubsection{Metrical Estimates on Exponential Sums}

The previous Lemmas on the number of solutions of Diophantine equations with linear forms are used to estimate the moments of the function

$$ F(N;x) = \left| \sum_{k=1}^{N}e(n_kx)\right|, \hspace{5mm}  x\in [0,1). $$

\noindent For $p\geq 1$ and $0\leq \alpha < \beta \leq 1$ we need to estimate the integral 
$$ I_{2p} = \int_{\alpha}^{\beta}|F(N;x)|^{2p}\mathrm{d}\mu(x) . $$

\noindent This in turn is used to provide estimates for the function 
\begin{equation} \label{phi}
\phi(t) = \mu\left(\{x\in[\alpha,\beta] : F(N;x)\geq \sqrt{tN\log\log N} \} \right) \, .
\end{equation}

The following lemma shows that if a probability measure on $[0,1)$ has Fourier transform with polynomial decay rate, then the same is true for any restriction of this measure to some subinterval. 

\begin{lem} \label{decaylemma}
Let $\mu$ be a probability measure on $[0,1)$ and $B=(\alpha,\beta)\subseteq[0,1)$ be a subinterval with $\mu(B)>0$. Let $\nu$ be the probability measure defined by $\nu(A)= \frac{1}{\mu(B)}\mu(A\cap B)$ for any subset $A\subseteq [0,1)$. If \eqref{mudecay} holds for some $\eta>0$, then
\begin{equation} \label{decay}
\widehat{\nu}(t) \, \ll \, \frac{1}{\mu(B)} |t|^{-\eta}, \hspace{5mm} |t|\to\infty \, .
\end{equation}
\end{lem}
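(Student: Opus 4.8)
The plan is to express $\widehat\nu$ as a convolution of $\widehat\mu$ with the Fourier transform of the indicator of $B$, and to estimate the result by splitting into frequency ranges. Since $\mu(B)\,\widehat\nu(t)=\int_0^1\mathbf 1_{(\alpha,\beta)}(x)\,e(tx)\,\mathrm d\mu(x)$, it is enough to bound this integral by $O(|t|^{-\eta})$ with an absolute constant and then divide by $\mu(B)$. I would also observe at the outset that one may assume $0<\eta\le 1$, since \eqref{mudecay} for a larger value of $\eta$ implies it for $\eta=1$, and only a positive decay rate is needed in the sequel.

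First I would expand the bounded‑variation function $x\mapsto\mathbf 1_{(\alpha,\beta)}(x)e(tx)$ in its Fourier series on $[0,1)$. Writing $b(\xi)=\int_\alpha^\beta e(\xi x)\,\mathrm dx$, so that $b(0)=\beta-\alpha$ and $|b(\xi)|\le\min\bigl(\beta-\alpha,\tfrac1{\pi|\xi|}\bigr)$, the $m$‑th Fourier coefficient of $\mathbf 1_{(\alpha,\beta)}(x)e(tx)$ equals $b(t-m)$. Since the Fourier partial sums of a bounded‑variation function are uniformly bounded and converge to it at every continuity point — here everywhere except the two points $\alpha,\beta$, which are $\mu$‑null because $\widehat\mu(t)\to 0$ forces $\mu$ to be atomless — dominated convergence against the finite measure $\mu$ yields the identity $\int_0^1\mathbf 1_{(\alpha,\beta)}(x)e(tx)\,\mathrm d\mu(x)=\sum_{m\in\mathbb Z}b(t-m)\,\widehat\mu(m)$. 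Checking this interchange is the first (routine) technical point.

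Next I would estimate $\sum_m b(t-m)\widehat\mu(m)$ with $T:=|t|$ large, using $|\widehat\mu(m)|\le\min(1,c|m|^{-\eta})$. For $|m|\le T/2$ one has $|t-m|\ge T/2$, hence $|b(t-m)|\ll 1/T$, while $\sum_{|m|\le T/2}|\widehat\mu(m)|\ll 1+T^{1-\eta}$, so this range contributes $\ll T^{-\eta}$. For $|m|\ge 2T$ one has $|t-m|\ge|m|/2$, hence $|b(t-m)|\ll 1/|m|$, and the contribution is $\ll\sum_{|m|\ge 2T}|m|^{-1-\eta}\ll T^{-\eta}$. The remaining range $T/2<|m|<2T$ is the crux: there $|\widehat\mu(m)|\ll T^{-\eta}$ uniformly, but the shifts $t-m$ sweep through $\asymp T$ nearly consecutive values, so the crude bound $\sum_{T/2<|m|<2T}|b(t-m)|\ll\log T$ only gives $\ll T^{-\eta}\log T$ for this block. (One may instead run the whole argument on $\mathbb R$ via the Fourier inversion formula for $\mathbf 1_{(\alpha,\beta)}$; the three ranges and their estimates are identical.)

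This already establishes the lemma with the exponent $\eta$ replaced by any $\eta'<\eta$, which is all that is actually used later. The main obstacle is removing the logarithm from the critical range so as to reach the stated exponent $\eta$: in that range one cannot afford to estimate $b$ and $\widehat\mu$ separately by their sizes (each such estimate alone loses a logarithm), but must exploit the cancellation carried by the oscillating factor $e(\xi\beta)-e(\xi\alpha)$ in $b(\xi)$ — for instance via the uniform boundedness of the partial sums of $\sum_{k\ge 1}k^{-1}\sin(2\pi k\gamma)$ — together with the Frostman‑type bound $\mu(I)\ll|I|^{\eta}$ that follows from \eqref{mudecay} (obtained by testing $\mu$ against a fixed Schwartz majorant of $\mathbf 1_I$ and using the decay of $\widehat\mu$). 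I expect this to be the delicate step; the two easy frequency ranges and the justification of the Fourier expansion are standard bookkeeping.
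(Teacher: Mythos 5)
Your reduction and the two easy ranges are correct, and you have put your finger on exactly the right difficulty; but as written the proposal does not prove the lemma. The whole content of the stated bound, as opposed to the version with $|t|^{-\eta}$ replaced by $|t|^{-\eta}\log|t|$, lives in the critical range $|m|\asymp|t|$, and for that range you offer only a list of ingredients (cancellation in $b$, the Frostman bound $\mu(I)\ll|I|^{\eta}$) together with the remark that you ``expect this to be the delicate step''. That is a genuine gap: the sum $\sum_{T/2<|m|<2T}b(t-m)\widehat{\mu}(m)$ is essentially a discrete Hilbert transform of $\widehat{\mu}$ against the oscillating weight $e((t-m)\beta)-e((t-m)\alpha)$, and for an arbitrary sequence bounded by $T^{-\eta}$ on that range the logarithm really does occur (take the coefficients to have the sign of $(t-m)^{-1}$); whatever removes it must exploit the positive-definiteness of $\widehat\mu$ or an averaged input such as $\sum_{|m|\le M}|\widehat\mu(m)|^2\ll M^{1-\eta'}$, and you have not indicated how. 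You are right, however, that the lossy bound $\ll\mu(B)^{-1}|t|^{-\eta+\varepsilon}$ is all that the paper ever uses: in Proposition \ref{I2pestimate} the lemma only enters through a sum of the shape $\sum_{K\ge1}K^{p+1}2^{-\eta K}$, which converges for any positive exponent, so the honest repair is to weaken the statement of the lemma accordingly.

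For comparison, the paper's own proof avoids your critical range by a different device: instead of expanding the sharp cutoff $\mathbf{1}_B$, it takes a $C^{\infty}$ function $\phi$ equal to $1$ on $B$, whose Fourier coefficients satisfy $|\widehat\phi(k)|\ll(1+|k|)^{-(1+\eta)}$ and are in particular summable; in the convolution $\sum_k\widehat\phi(k)\,\widehat\mu(t-k)$ the block $|k|\asymp|t|$ then contributes $\ll|t|\cdot|t|^{-(1+\eta)}$ and no logarithm appears. The price is the very first inequality of that proof, $\bigl|\int_Be(-tx)\,\mathrm d\mu\bigr|\le\bigl|\int e(-tx)\phi(x)\,\mathrm d\mu\bigr|$, which does not hold for an oscillatory integrand: $\phi\ge\mathbf{1}_B$ gives no comparison of absolute values, and the discrepancy $\int e(-tx)(\phi-\mathbf{1}_B)\,\mathrm d\mu$ is controlled only by the $\mu$-measure of the set where $\phi\ne\mathbf{1}_B$, which is not small. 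So the obstruction you isolated is real and is not actually dispatched by the argument in the appendix either; carrying out your cancellation step (or settling for the $|t|^{-\eta+\varepsilon}$ version, which suffices downstream) would repair the lemma rather than merely reproduce its proof.
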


\noindent We include the proof of the Lemma in the Appendix at the end of the paper.

\begin{prop}\label{I2pestimate} If $\alpha, \beta$ are such that $\mu((\alpha,\beta))\geq 1/n_1^{\eta}\sqrt{N} $ and $1\leq p \leq 3\log\log N$, then 
\begin{equation}\label{I2p}
\left| I_{2p} - \mu((\alpha,\beta))p! N^p \right| \leq \mu((\alpha,\beta))N^{p-\frac{1}{2}} , 
\end{equation}
for all $N$ large enough. 
\end{prop}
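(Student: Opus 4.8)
The plan is to expand the $2p$-th power of $F(N;\cdot)$ combinatorially and integrate term by term against the restriction of $\mu$ to $(\alpha,\beta)$. Write $B=(\alpha,\beta)$, and for $E\subseteq[0,1)$ let $\nu(E)=\mu(E\cap B)/\mu(B)$ be the normalised restriction of $\mu$ to $B$, as in Lemma \ref{decaylemma}. Since $|F(N;t)|^{2p}=\big(\sum_{k\le N}e(n_kt)\big)^{p}\big(\sum_{l\le N}e(-n_lt)\big)^{p}$, expanding the two products gives
\[
|F(N;t)|^{2p}=\sum_{(x,y)}e\!\big(A(x,y)\,t\big),
\]
the sum running over all $2p$-tuples $(x,y)=(x_1,\dots,x_p,y_1,\dots,y_p)\in\{n_1,\dots,n_N\}^{2p}$, with $A(x,y)$ the integer-valued linear form of the preceding subsection. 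Integrating over $B$ against $\mathrm{d}\mu$ and splitting off the vanishing forms, one obtains
\[
I_{2p}=\sum_{(x,y)}\int_{\alpha}^{\beta}e\!\big(A(x,y)t\big)\,\mathrm{d}\mu(t)=\mu(B)\,\#\{(x,y):A(x,y)=0\}+\mu(B)\!\!\sum_{A(x,y)\ne0}\!\!\widehat{\nu}\big(A(x,y)\big).
\]
The first summand must become the main term $\mu(B)\,p!\,N^{p}$ and the second must be absorbed into the error $\mu(B)N^{p-1/2}$.

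For the main term I would invoke \eqref{erdoseq4}, which gives $\#\{(x,y):A(x,y)=0\}=(p!)^{2}\binom{N}{p}+O\big((cp)^{3p}N^{p-1}\big)$. Since $(p!)^{2}\binom{N}{p}=p!\,N(N-1)\cdots(N-p+1)=p!\,N^{p}\big(1+O(p^{2}/N)\big)$, and since the hypothesis $p\le3\log\log N$ makes both $(cp)^{3p}$ and $p!\,p^{2}$ equal to $N^{o(1)}$, it follows that $\#\{(x,y):A(x,y)=0\}=p!\,N^{p}+O\big(N^{p-1+o(1)}\big)$, whence $\mu(B)\,\#\{(x,y):A(x,y)=0\}=\mu(B)\,p!\,N^{p}+O\big(\mu(B)N^{p-1/2}\big)$ for all $N$ large.

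For the off-diagonal sum I would combine the two bounds $|\widehat{\nu}(t)|\le1$ and, from Lemma \ref{decaylemma}, $|\widehat{\nu}(t)|\ll\mu(B)^{-1}|t|^{-\eta}$; together these give $\mu(B)\,|\widehat{\nu}(A)|\le\min\!\big(\mu(B),\,c|A|^{-\eta}\big)$ for every nonzero integer $A$ (the boundedly many $A$ for which the decay bound is not yet in force contribute at most $O\big(\mu(B)(4p)^{3p}N^{p-1}\big)$). Decompose the nonzero integers dyadically by $|A|\in[2^{K-1},2^{K})$, $K\ge1$; each such set is contained in two intervals of the form $B(s,2^{K-1})$, so by \eqref{erdoseq5} it contains $\ll(4p)^{3p}N^{p-1}(c_{q}K)^{p+1}$ of the tuples $(x,y)$. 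Bounding $\mu(B)\,|\widehat{\nu}(A)|$ by $\mu(B)$ on the rings with $2^{K}\lesssim\mu(B)^{-1/\eta}$ and by $c\,2^{-(K-1)\eta}$ on the remaining rings, a geometric-series summation over $K$ shows that both contributions are $\ll\mu(B)\,(4p)^{3p}N^{p-1}(c\log N)^{p+1}$. This is the point at which the hypothesis $\mu((\alpha,\beta))\ge n_{1}^{-\eta}N^{-1/2}$ is used: it keeps the dyadic cutoff exponent $\tfrac1\eta\log_{2}\!\big(1/\mu(B)\big)$ of size $O(\log N)$, and hence, together with $p\le3\log\log N$, makes the residual factor $(4p)^{3p}(c\log N)^{p+1}$ equal to $N^{o(1)}$; thus the whole off-diagonal sum is $O\big(\mu(B)N^{p-1/2}\big)$. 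Adding this to the main term gives $\big|I_{2p}-\mu((\alpha,\beta))\,p!\,N^{p}\big|\le\mu((\alpha,\beta))N^{p-1/2}$ for all $N$ large enough.

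The step I expect to be the main obstacle is precisely this bookkeeping. One must control simultaneously the combinatorial factors growing in $p$ — quantities such as $p^{O(p)}$ and $p!$, which are $N^{o(1)}$ only because $p\le3\log\log N$ — and the possible smallness of $\mu((\alpha,\beta))$, which forces the dyadic cutoff up to order $\log N$ but, thanks to the standing lower bound on $\mu((\alpha,\beta))$, no further; and one must verify that after all the multiplications every leftover factor of $(\log N)^{O(p)}$ is still $N^{o(1)}$ and so disappears against the target error $\mu((\alpha,\beta))N^{p-1/2}$.
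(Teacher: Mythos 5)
Your proof follows essentially the same route as the paper's: expand $|F(N;\cdot)|^{2p}$, isolate the diagonal via \eqref{erdoseq4}, and control the off-diagonal terms by combining the dyadic solution counts of \eqref{erdoseq5} with the Fourier decay of the restricted measure from Lemma \ref{decaylemma}, using $p\le 3\log\log N$ together with the lower bound on $\mu((\alpha,\beta))$ to absorb the $N^{o(1)}$ factors into the error $\mu((\alpha,\beta))N^{p-1/2}$. The only cosmetic difference is that you take a minimum of the trivial and decay bounds with an explicit cutoff at $|A|\asymp\mu(B)^{-1/\eta}$, whereas the paper treats the range $0<|A(x,y)|\le\tfrac12 n_1$ trivially and applies the decay bound on all larger dyadic ranges; this is the same argument.
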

\begin{proof}
By definition of $F(N;x)$ we have
\begin{eqnarray*}
I_{2p} &=& \mathop{\sum_{1\leq k_1,\ldots,k_p\leq N}}_{1\leq l_1,\ldots,l_p\leq N}\int_{\alpha}^{\beta}e\left(\sum_{j=1}^{p}(n_{k_j}-n_{k_l})x \right)\mathrm{d}\mu(x) \\[2ex]
& = & \mu((\alpha,\beta))\sum_{A(x,y)=0}\!1 \, + \,  \sum_{0<|A(x,y)|\leq \frac{1}{2}n_1}\int_{\alpha}^{\beta}e(A(x,y)t)\mathrm{d}\mu(t) \\[1ex]
&   & \hspace{2mm}+\, O\left( \sum_{K=1}^{\infty}\sum_{2^K\leq A(x,y) < 2^{K+1}}\int_{\alpha}^{\beta}e(A(x,y)t)\mathrm{d}\mu(t)   \, \right) ,
\end{eqnarray*}
where the implicit constant in the $O$-estimate is equal to $1$. The first term is estimated by \eqref{erdoseq4}, the second term again by \eqref{erdoseq4} and the trivial bound $|e(x)|\leq 1$, while for the third term we use \eqref{decay} and \eqref{erdoseq5}. Thus 
\begin{eqnarray*}
\left|I_{2p} - \mu((\alpha,\beta))(p!)^2\binom{N}{p}  \right| & \leq & \mu((\alpha,\beta))(cp)^{3p}+ 2\mu((\alpha,\beta))(4c_1p)^{3p-1}N^{p-1}  \\
& \, &   + \, \frac{2(4p)^{p-1}N^{p-1}}{n_1^{\eta}(\log q)^{2p+2}}\sum_{K=1}^{\infty} \frac{1}{2^{\eta K}}     \\
& \leq & 2\mu((\alpha,\beta))(cp)^{3p}N^{p-1} + c_2\frac{(4p)^{p-1}N^{p-\frac{1}{2}}}{n_1^{\eta}\sqrt{N}},
\end{eqnarray*}
where $c_1=\log(4q^2(q-1)^{-1})$ and $c_2>0$ is some constant depending only on $\eta$ and $q$. Using the inequality
$$\left| (p!)^2\binom{N}{p} -p!N^p\right| \leq \frac{1}{2}N^{p-\frac{1}{2}} $$
finally yields the requested estimate \eqref{I2p}.
\end{proof}

\noindent Armed with Proposition \ref{I2pestimate} the analogue of \cite[Lemma 8]{erdosgal} follows. The proof is omitted, as it involves precisely the same arguments. 

\begin{lem} Let $\phi$ be the function defined in \eqref{phi} and $0<\varepsilon<1$. Then
$$ \phi(1-\varepsilon) > \frac{\mu((\alpha,\beta))}{(\log N)^{1-4\varepsilon^2}} \, \cdot $$
\end{lem}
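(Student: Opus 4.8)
The goal is a Paley--Zygmund-type second moment argument carried out with the $2p$-th moments rather than the square, exactly as in Erd\H{o}s--Gal. The plan is to take $p$ of size $\asymp\log\log N$ and show that $F(N;x)$ is, with not-too-small $\mu$-probability, of order $\sqrt{(1-\varepsilon)N\log\log N}$ by comparing $I_{2p}$ with the contribution coming only from the set where $F(N;x)$ is small. First I would fix the parameter choice: put $p=\lfloor (1-2\varepsilon)\log\log N\rfloor$ (or whatever normalisation makes the final exponent $1-4\varepsilon^2$ come out), which indeed satisfies $1\le p\le 3\log\log N$ so that Proposition~\ref{I2pestimate} applies, and also satisfies the hypothesis $\mu((\alpha,\beta))\ge n_1^{-\eta}N^{-1/2}$ which we may assume (otherwise there is nothing to prove, since one can enlarge the interval or the statement is vacuous for such tiny intervals). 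By that proposition, $I_{2p}=\mu((\alpha,\beta))\,p!\,N^p\,(1+o(1))$.

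Next I would split the integral defining $I_{2p}$ according to whether $F(N;x)<\sqrt{(1-\varepsilon)N\log\log N}$ or not. On the ``small'' part the integrand is at most $\big((1-\varepsilon)N\log\log N\big)^{p}$, and the measure of that part is at most $\mu((\alpha,\beta))$; on the ``large'' part, whose $\mu$-measure is by definition $\phi(1-\varepsilon)$, one bounds $|F(N;x)|^{2p}$ by the trivial pointwise bound $N^{2p}$. This yields
\begin{equation*}
\mu((\alpha,\beta))\,p!\,N^p\,(1+o(1)) \;=\; I_{2p} \;\le\; \mu((\alpha,\beta))\big((1-\varepsilon)N\log\log N\big)^{p} \;+\; \phi(1-\varepsilon)\,N^{2p},
\end{equation*}
so that
\begin{equation*}
\phi(1-\varepsilon) \;\ge\; \frac{\mu((\alpha,\beta))\,N^p\big(p!\,(1+o(1)) - \big((1-\varepsilon)\log\log N\big)^{p}\big)}{N^{2p}} \;=\; \frac{\mu((\alpha,\beta))\big(p! - (1-\varepsilon)^p(\log\log N)^p(1+o(1))\big)}{N^{p}}.
\end{equation*}
The crux is then to show the bracketed numerator is still a positive quantity of size $\asymp p!$: by Stirling, $p!\sim \sqrt{2\pi p}\,(p/e)^p$, and with $p\sim (1-2\varepsilon)\log\log N$ one has $p/e$ comfortably larger than $(1-\varepsilon)\log\log N$ is \emph{not} true --- rather one must be more careful and use that $(1-\varepsilon)^p (\log\log N)^p/p! \to 0$ because $p$ is taken slightly \emph{below} $\log\log N$ so that the ratio $(1-\varepsilon)\log\log N / p \to (1-\varepsilon)/(1-2\varepsilon)$, and then a further logarithmic-in-$p$ gain from Stirling is used; this is precisely the bookkeeping in \cite[Lemma 8]{erdosgal}, and I would simply reproduce it, tracking the extra factor $\mu((\alpha,\beta))$ which rides along untouched.

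Finally, dividing through and applying Stirling's formula once more to turn $p!/N^p$ and the exponential factors into the clean bound, one arrives at
\begin{equation*}
\phi(1-\varepsilon) \;>\; \frac{\mu((\alpha,\beta))}{(\log N)^{1-4\varepsilon^2}}
\end{equation*}
for all $N$ large enough. The main obstacle is purely the asymptotic optimisation of $p$: one needs the exponent of $\log N$ in the denominator to be exactly $1-4\varepsilon^2$ (not merely $1-c\varepsilon$), which forces the quadratic-in-$\varepsilon$ choice of $p$ and a careful second-order expansion of $\log(p!) - p\log\log\log N$ etc.; since this is identical to Erd\H{o}s--Gal's computation with Lebesgue measure replaced by $\mu$, and the only $\mu$-dependence enters through the harmless prefactor $\mu((\alpha,\beta))$ (legitimate thanks to Lemma~\ref{decaylemma} and Proposition~\ref{I2pestimate}), I would present the parameter choice and the moment splitting in full and then refer to \cite{erdosgal} for the remaining elementary estimates.
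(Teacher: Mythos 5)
There is a genuine gap at the heart of your moment splitting. The paper itself gives no proof of this lemma beyond the remark that it follows from Proposition \ref{I2pestimate} by ``precisely the same arguments'' as Lemma 8 of \cite{erdosgal}, so deferring to Erd\H{o}s--Gal is in itself consistent with the paper; the problem is that the concrete chain of inequalities you do write down cannot yield the stated bound. On the set where $F(N;x)\ge\sqrt{(1-\varepsilon)N\log\log N}$ you bound $|F(N;x)|^{2p}$ by the trivial pointwise bound $N^{2p}$, which after rearranging leaves
\begin{equation*}
\phi(1-\varepsilon)\;\ge\;\frac{\mu((\alpha,\beta))\bigl(p!-((1-\varepsilon)\log\log N)^{p}\bigr)(1+o(1))}{N^{p}}\,.
\end{equation*}
Since $p\le 3\log\log N$ forces $p!\le\exp\bigl(O((\log\log N)^{2})\bigr)=N^{o(1)}$ while $N^{p}\ge N$, the right-hand side is $O\bigl(\mu((\alpha,\beta))\,N^{-1/2}\bigr)$, which is far smaller than the target $\mu((\alpha,\beta))(\log N)^{-(1-4\varepsilon^{2})}$. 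No choice of $p$ and no application of Stirling's formula can repair this: the loss is the uncancelled factor $N^{p}$ left in the denominator, not the combinatorics of $p!$ versus $(\log\log N)^{p}$.

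The missing idea is that the upper-tail integral must be estimated so that the powers of $N$ cancel exactly against those in $I_{2p}\approx\mu((\alpha,\beta))\,p!\,N^{p}$. In \cite{erdosgal} this is done not with the trivial bound but by controlling $\int_{F\ge T}F^{2p}\,\mathrm{d}\mu$ through higher moments: either by Cauchy--Schwarz, $\int_{F\ge T}F^{2p}\,\mathrm{d}\mu\le\phi(1-\varepsilon)^{1/2}I_{4p}^{1/2}$ with $I_{4p}\approx\mu((\alpha,\beta))(2p)!\,N^{2p}$, or by a layer-cake decomposition of $\{F\ge T\}$ into levels $F^{2}\approx sN\log\log N$ combined with the Chebyshev--Markov upper bound $\mu\bigl(F\ge\sqrt{sN\log\log N}\bigr)\ll\mu((\alpha,\beta))(\log N)^{-s(1+o(1))}$, itself obtained from Proposition \ref{I2pestimate} applied with a second, larger moment index. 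Either way the factor $N^{p}$ disappears and one is left to optimise only over factorials and powers of $\log\log N$, which is where the exponent $1-4\varepsilon^{2}$ and the admissible range $p\le 3\log\log N$ actually come from. As written, your argument proves only a vacuously weak lower bound, so the $N^{2p}$ step must be replaced by one of these tail treatments before the reference to \cite{erdosgal} for the remaining elementary estimates is legitimate.
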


\noindent  The remaining steps for the proof of the lower bound in Theorem \ref{thm1} go along the lines of \cite[p. 77-80]{erdosgal}, with the appropriate modifications for the measure $\mu$ instead of the Lebesgue measure.  

\section{Proof of Theorem \ref{thm3}}

We utilize the discrepancy estimate coming from Theorem \ref{thm1} in order to improve the result in \cite{hjk}. We set
$$ \psi(N) = N^{-1/2}(\log\log N)^{\frac{1}{2}+\varepsilon}  \, . $$
For any $i\geq 1$ let $(q_n^{(i)})_{n=1}^{\infty}$ be the sequence of denominators associated with the continued fraction expansion of $\alpha_i,$ and set 
\begin{equation*}
G_i =\bigcap_{\gamma\in\mathbb{R}} \left\{\beta\in [0,1) : \|q_n^{(i)}\beta-\gamma \| \leq \frac{(\log_3 q_n^{(i)})^{\frac{1}{2}+\varepsilon}}{(\log q_n^{(i)})^{1/2}} \text{ for inf. many } n=1,2,\ldots \right\} .
\end{equation*}
The proof of the theorem will be complete as long as we show that the set
$$ G = \bigcap_{i=1}^{\infty}\, (G_i \cap \Bad ) \, \subseteq \,\Bad \, $$
has Hausdorff dimension $\dim G=1$. To that end, let $\mu=\mu(N,\delta,\varepsilon)$ be any of the probability measures as in Theorem of Kaufman and Queff\'elec-Ramar\'e. Now for each real $\gamma$ define the sequence of indices 
$$ N_{k}^{\gamma} = \min\left\{N\geq 1\, :\, \#\{1\leq n\leq N : \{q_n^{(i)}\beta\}\in B(\gamma,\psi(N)) \}=k   \right\}, \hspace{4mm} k\geq 1. $$
{\it Claim:} The sequence $(N_k^{\gamma})_{k=1}^{\infty}$ is well-defined. \newline
{\it Proof of Claim:} Since $\alpha_i$ is badly approximable, the sequence $(q_n^{(i)})_{n=1}^{\infty}$ is lacunary and also 
\begin{equation} \label{growthrate}
\log q_n^{(i)} \, \asymp \, n , \hspace{5mm} n\to\infty\,
\end{equation}
(see \cite[p.288, 297]{PV} for more details). Hence the lacunarity property together with Theorem \ref{thm1} imply that for finally all $N\geq 1$ we have
\begin{equation*}
D_N(q_k^{(i)}\beta) \leq C_iN^{-1/2}(\log\log N)^{\frac{1}{2} } \hspace{4mm} \text{ for $\mu$-almost all } \beta\in [0,1). 
\end{equation*} 
Here the constant $C_i>0$ depends on $\alpha_i \in\Bad$ as in Theorem \ref{thm1}. For these values of $\beta$ lying in a set of full $\mu$-measure, the definition of discrepancy yields 
\begin{equation*}
\left|\#\{1\leq k\leq N : \{q_k^{(i)}\beta\}\in B(\gamma,\psi(N)) \} -2N\psi(N) \right| \leq C_i(N \log\log N)^{1/2}
\end{equation*}
for all $\gamma\in\mathbb{R}$. Hence
\begin{eqnarray*}
\#\{1\leq k\leq N : \{q_k^{(i)}\beta\}\in B(\gamma,\psi(N))  \} & \geq & 2N\psi(N) -C_i(N\log\log N)^{1/2} \\
&\geq & N^{1/2}(\log\log N)^{\frac{1}{2}+\varepsilon},
\end{eqnarray*}
for all $N$ sufficiently large. This inequality shows that for all $k\geq 1$ there exists $N_k^{\gamma} \in \mathbb{N}$ such that 
$$ \#\{1\leq n\leq N_k^{\gamma} : \{q_n^{(i)}\beta\}\in B(\gamma,\psi(N_k^{\gamma})) \}=k.   $$
The claim is proved. 

\noindent Thus for all $k=1,2,\ldots$ and for all $\beta$ in a set of full $\mu$--measure  we have
\begin{eqnarray*}
\| q_{N_k^{\gamma}}^{(i)}\beta -\gamma \| & \leq & (N_k^{\gamma})^{-\frac{1}{2}}(\log\log N_k^{\gamma})^{\frac{1}{2}+\varepsilon} \\
 & \stackrel{\eqref{growthrate}}{\ll} & \frac{(\log_3 q_{N_k^{\gamma}}^{(i)} )^{\frac{1}{2}+\varepsilon}}{( \log q_{N_k^{\gamma}}^{(i)} )^{1/2}} \, \cdot
\end{eqnarray*}
Thus $\mu(G_i)=1$ for all $i=1,2,\ldots$ and hence $\mu(G)=1$. Since $\mu=\mu(N,\delta,\varepsilon)$ was arbitrarily chosen, by property (ii) of the theorem of Kaufman and Queff\'elec--Ramar\'e together with the mass distribution principle.
cf. \cite[p. 975]{beresnevichvelani}, we conclude that 
$$ \dim G \geq \delta, \hspace{4mm} \text{ for all  }\,  \textstyle\frac{1}{2}<\delta <1, $$   
so $\dim G =1$ as required.

\newpage
 
\section{Appendix : The decay rate of the Fourier transform of the restricted measure.} 
 
Here we present a proof of Lemma \ref{decaylemma}, since we have not been able to locate the statement in the literature. The proof follows the one of an analogous result in \cite[p. 252]{kahane}. 

\noindent Since \eqref{mudecay} holds, there exists a constant $C_1>0$ such that 
\begin{equation*}
|\widehat{\mu}(t)| \leq C_1(1+|t|)^{-\eta} \hspace{5mm} \text{for all }t\in\mathbb{R}.
\end{equation*}
Let $\phi:\mathbb{R}\rightarrow (0,\infty)$ be a $C^{\infty}$ function which is equal to $1$ on the interval $B=(\alpha,\beta)$. Since $\phi$ is $C^{\infty}$, we have 
$$ \phi(x) \, = \, \sum_{k=-\infty}^{+\infty} \widehat{\phi}(k)e(kx)\, , \hspace{5mm} x\in\mathbb{R}  $$
where the convergence is uniform for all $x\in\mathbb{R}$.  Furthermore, since $\phi$ is a $C^{\infty}$ function, there exists a constant $C_{\eta}>0$ such that 
\begin{equation} \label{phifourierdecay}
|\widehat{\phi}(k) | \leq C_{\eta} (1 + |k| )^{-(1+\eta)}, \hspace{4mm} k\in\mathbb{Z} .
\end{equation} Set $S=\sum\limits_{k=-\infty}^{+\infty}(1+|k|)^{-(1+\eta)}<\infty.$ For the probability measure $\nu$ defined as in Lemma \ref{decaylemma} we have for $t>0$
\begin{eqnarray*}
|\widehat{\nu}(t)| & = & \frac{1}{\mu(B)}\left|\int_{B}e(-tx)\mathrm{d}\mu(x) \right| \\
& \leq &  \frac{1}{\mu(B)}\left|\int e(-tx)\phi(x)\mathrm{d}\mu(x) \right| \\
& = & \frac{1}{\mu(B)}\left|\int e(-tx)\sum_{k=-\infty}^{+\infty}\widehat{\phi}(k)e(kx)\mathrm{d}\mu(x) \right| \\
& = & \frac{1}{\mu(B)} \left|\sum_{k=-\infty}^{+\infty}\widehat{\phi}(k) \int e(-(t-k))\mathrm{d}\mu(x) \right| \\
& \leq & \frac{1}{\mu(B)}\sum_{k=-\infty}^{+\infty}|\widehat{\phi}(k)\widehat{\mu}(t-k)| \\
& = & \frac{1}{\mu(B)}\sum_{|k|\leq \frac{1}{2}|t|}|\widehat{\phi}(k)\widehat{\mu}(t-k)|\, +\, \frac{1}{\mu(B)}\sum_{|k|>\frac{1}{2}|t|}|\widehat{\phi}(k)\widehat{\mu}(t-k)| \, .
\end{eqnarray*} 
We deal with the first of the two terms. The condition of summation implies that $\frac{1}{2}|t| \leq |t-k| \leq \frac{3}{2}|t|$. Hence employing \eqref{mudecay} and \eqref{phifourierdecay} we get
\begin{eqnarray*}
\frac{1}{\mu(B)}\sum_{|k|\leq \frac{1}{2}|t|}|\widehat{\phi}(k)\widehat{\mu}(t-k)| &\leq & \frac{1}{\mu(B)}\sum_{|k|\leq \frac{1}{2}|t|}C_{\eta}(1+|t|)^{-(1+\eta)}\left(1+\frac{1}{2}|t|\right)^{-\eta}\\ &\leq & \frac{1}{\mu(B)}2^{\eta}C_{\eta}S(1+|t|)^{-\eta} \, .
\end{eqnarray*}
Regarding the second term, using the trivial bound $|\widehat{\mu}(t)|\leq 1$ together with \eqref{phifourierdecay} we get 
\begin{eqnarray*}
\frac{1}{\mu(B)}\sum_{|k|>\frac{1}{2}|t|}|\widehat{\phi}(k)\widehat{\mu}(t-k)| \leq \frac{1}{\mu(B)}\sum_{|k|>\frac{1}{2}|t|}C_{\eta}(1+|k|)^{-(1+\eta)} \leq \frac{1}{\mu(B)}C_{\eta}2^{\eta}(1+|t|)^{-\eta}.
\end{eqnarray*}
Combining the two estimates, we obtain
$$ |\widehat{\nu}(t)| \leq \frac{1}{\mu(B)}C(1+|t|)^{-\eta} \hspace{4mm} \text{for all }t>0$$ 
with $C=2^{\eta}C_{\eta}(1+S)>0$. The same bound is also true for all real values of $t$ in view of the relation $|\widehat{\nu}(t)| = |\widehat{\nu}(-t)|$, hence the Lemma is proved. \vspace{2mm}

\noindent \textbf{Acknowledgements: }We would like to thank Professor C. Aistleitner and Professor S. Velani for suggesting this direction of research and for many useful comments during the preparation of this paper. We also thank Bence Borda for pointing out Corollary \ref{cor} to us.

\noindent Niclas Technau: Department of Mathematics,
University of York,

\vspace{-2mm}

\noindent\phantom{Niclas Technau: }Heslington, York, YO10
5DD, England.
\vspace{-2mm}

\noindent\phantom{Niclas Technau: }e-mail: niclastechnau@gmail.com
\vspace{2mm}

\noindent Agamemnon Zafeiropoulos: Institute of Analysis and Number Theory,
TU Graz,

\vspace{-2mm}

\noindent\phantom{Agamemnon Zafeiropoulos: }Steyrergasse 30/II, 8010 Graz, Austria.

\vspace{-2mm}

\noindent\phantom{Agamemnon Zafeiropoulos: }e-mail: zafeiropoulos@math.tugraz.at

\end{document}